\title{Connected Hopf algebras of dimension $p^2$}
\author{Xingting Wang}
\address{
Xingting Wang\\ Department of Mathematics\\ University of Washington\\ Seattle, WA 98195}
\email{xingting@uw.edu}
\thanks{Research of the author was partially supported by the US National Science Foundation.}
\keywords{Hopf algebras, Lie algebras, connected, coradically graded, cocommutative}
\subjclass[2010]{16T05, 16S30}
\date{March, 2013}                                           
\newcommand*{\e}{\ensuremath{\varepsilon}}
\newcommand*{\field}{\ensuremath{\bold{k}}}
\newcommand*{\Prim}{\ensuremath{\text{\upshape P}}}
\newcommand*{\Hom}{\ensuremath{\text{\upshape Hom}}}
\newcommand*{\gr}{\ensuremath{\text{\upshape gr}}}
\newcommand*{\HL}{\ensuremath{\text{\upshape H}}}
\newcommand*{\Ext}{\ensuremath{\text{\upshape Ext}}}
\newcommand*{\Ker}{\ensuremath{\text{\upshape Ker}}}
\newcommand*{\ad}{\ensuremath{\text{\upshape ad}}}
\newcommand*{\End}{\ensuremath{\text{\upshape End}}}
\newcommand*{\Img}{\ensuremath{\text{\upshape Im}}}
\newcommand*{\degree}{\ensuremath{\text{\upshape d}}}
\newcommand*{\PBW}{\ensuremath{\text{\upshape PBW}}}
\newcommand*{\Lie}{\ensuremath{\text{\upshape Lie}}}
\newtheorem*{theorem1*}{Theorem \ref{generalgrc}}
\newtheorem*{theorem2*}{Theorem \ref{FREENESS}}
\newtheorem*{theorem3*}{Theorem \ref{NPLA}}
\newtheorem*{theorem4*}{Theorem  \ref{centerP1}}
\newtheorem*{theorem5*}{Theorem  \ref{D2}}
\newtheorem*{theorem6*}{Theorem \ref{NPLA}}
\newtheorem{theorem}{Theorem}[section]
\newtheorem{lemma}[theorem]{Lemma}
\newtheorem{corollary}[theorem]{Corollary}
\newtheorem{proposition}[theorem]{Proposition}
\theoremstyle{definition}
\newtheorem{remark}[theorem]{Remark}
\newtheorem{definition}[theorem]{Definition}
\newtheorem{convention}[theorem]{Convention}
\begin{document}
\begin{abstract}
Let $H$ be a finite-dimensional connected Hopf algebra over an algebraically closed field $\field$ of characteristic $p>0$. We provide the algebra structure of the associated graded Hopf algebra $\gr H$. Then, we study the case when $H$ is generated by a Hopf subalgebra $K$ and another element and the case when $H$ is cocommutative. When $H$ is a restricted universal enveloping algebra, we give a specific basis for the second term of the Hochschild cohomology of the coalgebra $H$ with coefficients in the trivial $H$-bicomodule $\field$. Finally, we classify all connected Hopf algebras of dimension $p^2$ over $\field$.  
\end{abstract}

\maketitle

\section{Introduction}
Let $\field$ denote a base field, algebraically closed of characteristic $p>0$. In \cite{Hen}, all graded cocommutative connected Hopf algebras of dimension less than or equal to $p^3$ are classified by using W.M.~Singer's theory of extensions of connected Hopf algebras \cite{WMS}. In this paper, we classify all connected Hopf algebras of dimension $p^2$ over $\field$. We use the theories of restricted Lie algebras and Hochschild cohomology of coalgebras for restricted universal enveloping algebras.

Let $H$ denote a finite-dimensional connected Hopf algebra in the sense of \cite[Def. 5.1.5]{MO93} with primitive space $\Prim(H)$, and $K$ be a Hopf subalgebra of $H$. In Section 2, basic definitions related to and properties of $H$ are briefly reviewed. In particular, we describe a few concepts concerning the inclusion $K\subseteq H$. We say that the \emph{$p$-index} of $K$ in $H$ is $n-m$ if $\dim K=p^m$ and $\dim H=p^n$. The notion of the \emph{first order} of the inclusion and a \emph{level-one} inclusion are also given in Definition \ref{BDCHA}.

In Section 3, the algebra structure of a finite-dimensional connected coradically graded Hopf algebra is obtained (Theorem \ref{generalgrc}) based on a result for algebras representing finite connected group schemes over $\field$. It implies that the associated graded Hopf algebra $\gr H$ is isomorphic to as algebras \[\field\left[x_1,x_2,\cdots,x_d\right]/\left(x_1^p,x_2^p,\cdots,x_d^p\right)\] for some $d\ge0$.

Section 4 concerns a simple case when $H$ is generated by $K$ and another element $x$. Suppose the $p$-index of $K$ in $H$ is $d$. Under an additional assumption, the basis of $H$ as a left $K$-module is given in terms of the powers of $x$ (Theorem \ref{FREENESS}). Moreover, if $K$ is normal in $H$ \cite[Def. 3.4.1]{MO93}, then $x$ satisfies a polynomial equation as follows:
\begin{align*}
x^{p^d}+\sum_{i=0}^{d-1}a_ix^{p^i}+b=0
\end{align*}
for some $a_i\in \field$ and $b\in K$. 

Section 5 deals with the special case when $H$ is cocommutative. It is proved in Proposition \ref{cocommfiltration} that such Hopf algebra $H$ is equipped with a series of normal Hopf subalgebras $\field=N_0\subset N_1\subset N_2\subset \cdots \subset N_n=H$ satisfying certain properties. If we apply these properties to the case when $\Prim(H)$ is one-dimensional, then we have $N_1$ is generated by $\Prim(H)$ and each $N_i$ has $p$-index one in $N_{i+1}$ (Corollary \ref{FCLH}). In Theorem \ref{NPLA}, we give locality criterion for $H$ in terms of its primitive elements. This result, after dualization, is equivalent to a criteria for unipotency of finite connected group schemes over $\field$, as shown in Remark \ref{GSL}.

In section 6, we take the Hopf subalgebra $K=u\left(\mathfrak g\right)$, the restricted universal enveloping algebra of some finite-dimensional restricted Lie algebra $\mathfrak g$. We consider the Hochschild cohomology of the coalgebra $K$ with coefficients in the trivial bicomodule $\field$, namely $\HL^\bullet(\field,K)$. Then the Hochschild cohomology can be computed as the homology of the cobar construction of $K$. In Proposition \ref{Liealgebrainclusion}, we give a specific basis for $\HL^2(\field,K)$. We further show, in Lemma \ref{chainmap}, that $\bigoplus_{n\ge 0} \HL^n(\field ,K)$ is a graded restricted $\mathfrak g$-module via the adjoint map. When the inclusion $K\subseteq H$ has first order $n\ge 2$, the differential $d^1$ in the cobar construction of $H$ induces a restricted $\mathfrak g$-module map from $H_n$ into $\HL^2(\field, K)$, whose kernel is $K_n$ (Theorem \ref{Cohomologylemma}). Concluded in Theorem \ref{HCT}, if $K\neq H$, we can find some $x\in H\setminus K$ with the following comultiplication   
\[
\Delta(x)=x\otimes 1+1\otimes x+\omega\left(\sum_i\alpha_ix_i\right)+\sum_{j<k}\alpha_{jk}x_j\otimes x_k
\]
where $\{x_i\}$ is a basis for $\mathfrak g$.

Finally, the classification of connected Hopf algebras of dimension $p^2$ over $\field$ is accomplished in section 7. Assume $\dim H=p^2$. We apply results on $H$ from previous sections, i.e., Corollary \ref{FCLH} and Theorem \ref{HCT}. The main result is stated in Theorem \ref{D2} and divided into two cases. When $\dim \Prim(H)=2$, based on the classification of two-dimensional Lie algebras with restricted maps (see Appendix A), there are five non-isomorphic classes
\begin{itemize}
\item[(1)] $\field\left[x,y\right]/\left(x^p,y^p\right)$,
\item[(2)] $\field\left[x,y\right]/\left(x^p-x,y^p\right)$,
\item[(3)] $\field\left[x,y\right]/\left(x^p-y,y^p\right)$,
\item[(4)] $\field\left[x,y\right]/\left(x^p-x,y^p-y\right)$,
\item[(5)] $\field\langle x,y\rangle/\left([x,y]-y,x^p-x,y^p\right)$,
\end{itemize}
where $x,y$ are primitive. When $\dim\Prim(H)=1$, $H$ must be commutative and there are three non-isomorphic classes
\begin{itemize}
\item[(6)] $\field\left[x,y\right]/(x^p,y^p)$,
\item[(7)] $\field\left[x,y\right]/(x^p,y^p-x)$,
\item[(8)] $\field\left[x,y\right]/(x^p-x,y^p-y)$,
\end{itemize}
where $\Delta\left(x\right)=x\otimes 1+1\otimes x$ and $\Delta\left(y\right)=y\otimes 1+1\otimes y+\omega(x)$.
Moreover, all local Hopf algebras of dimension $p^2$ over $\field$ are classified by duality, see Corollary \ref{localp2}.

\section{Preliminaries}
Throughout this paper, $\field$ denotes a base field, algebraically closed of characteristic $p>0$. All vector spaces, algebras, coalgebras, and tensor products are taken over $\field$ unless otherwise stated. Also, $V^*$ denotes the vector space dual of any vector space $V$. 

For any coalgebra $C$, the \bf{coradical}\rm\ $C_0$ is defined to be the sum of all simple subcoalgebras of $C$. Following \cite[5.2.1]{MO93}, $\{C_n\}_{n=0}^\infty$ is used to denote the \bf{coradical filtration}\rm\ of $C$. If $C_0$ is one-dimensional, $C$ is called \textbf{connected}. If every simple subcoalgebra of $C$ is one-dimensional, $C$ is called \textbf{pointed}. Let $(C,\Delta,\e)$ be a pointed coalgebra, and $\left(M,\rho_l,\rho_r\right)$ be a $C$-bicomodule via the structure maps $\rho_l: M\to C\otimes M$ and $\rho_r: M\to M\otimes C$. We denote the identity map of $C^{\otimes n}$ by $I_n$ and $C^{\otimes 0}=\field$. The \textbf{Hochschild cohomology} $\HL^\bullet\left(M,C\right)$ of $C$ with coefficients in $M$ is defined by the homology of the complex $\left(\mathbb C^n(M,C),d^n\right)$, where $\mathbb C^n(M,C)=\Hom_\field\left(M,C^{\otimes n}\right)$ and 
\begin{align*}
d^n(f)=(I\otimes f)\rho_l-(\Delta\otimes I_{n-1})f+\cdots+(-1)^n(I_{n-1}\otimes \Delta)f+(-1)^{n+1}(f\otimes I)\rho_r.
\end{align*}

For any Hopf algebra $H$, we use $\Prim(H)$ to indicate the subspace of primitive elements. Following the terminology in \cite[Def. 1.13]{Andruskiewitsch02pointedhopf}, we recall the definition of graded Hopf algebras.
\begin{definition}
Let $H$ be a Hopf algebra with antipode $S$. If
\begin{itemize}
\item[(1)] $H=\bigoplus_{n=0}^{\infty}H(n)$ is a graded algebra,
\item[(2)] $H=\bigoplus_{n=0}^{\infty}H(n)$ is a graded coalgebra,
\item[(3)] $S(H(n))\subseteq H(n)$ for any $n\ge 0$,
\end{itemize}
then $H$ is called a \bf{graded Hopf algebra}\rm.\ If in addition,
\begin{itemize}
\item[(4)] $H=\bigoplus_{n=0}^{\infty} H(n)$ is a coradically graded coalgebra,
\end{itemize}
then $H$ is called a \bf{coradically graded Hopf algebra}\rm. Also, the \bf{associated graded Hopf algebra}\rm\ of $H$ is defined by $\gr H=\bigoplus_{n\ge 0} H_n/H_{n-1}$ ($H_{-1}=0$) with respect to its coradical filtration. 
\end{definition}
There are some basic properties of finite-dimensional Hopf algebras, which we use frequently. 
\begin{proposition}\label{BPH}
Let $H$ be a finite-dimensional Hopf algebra.
\begin{itemize}
\item[(1)] $H$ is local if and only if $H^*$ is connected.
\item[(2)] If $H$ is local, then any quotient or Hopf subalgebra of $H$ is local.
\end{itemize}
Furthermore assume that $H$ is connected. Denote by $u\left(\Prim(H)\right)$ the restricted universal enveloping algebra of $\Prim(H)$.
\begin{itemize}
\item[(3)] Any quotient or Hopf subalgebra of $H$ is connected.
\item[(4)] $\dim \Prim(H)=\dim J/J^2$, where $J$ is the Jacobson radical of $H^*$.
\item[(5)] $H$ is primitively generated if and only if $H\cong u\left(\Prim(H)\right)$.
\item[(6)] $\dim u\left(\Prim(H)\right)=p^{\dim \Prim(H)}$.
\item[(7)] $\dim H=p^n$ for some integer $n$. 
\end{itemize}
\end{proposition}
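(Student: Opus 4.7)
The plan is to exploit the duality between finite-dimensional algebras and coalgebras systematically. Under the natural pairing between $H$ and $H^*$, the coradical filtration satisfies $H_n^\perp = J^{n+1}$ for $J = J(H^*)$, and Hopf subalgebras of $H$ correspond bijectively to Hopf quotients of $H^*$; in particular, ``local'' and ``connected'' are dual conditions. With this dictionary in hand, most claims reduce to bookkeeping on one side or the other.

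For (1), $H$ being local is equivalent to $H/J(H) = \field$ (using that $\field$ is algebraically closed), and dualizing gives $(H^*)_0 = J(H)^\perp = \field$, i.e.\ $H^*$ is connected. For (3), any Hopf subalgebra $K \subseteq H$ satisfies $K_0 \subseteq H_0 = \field$; and a Hopf quotient $H \twoheadrightarrow H/I$ remains connected because the quotient map sends group-likes to group-likes, so the only group-like of $H/I$ is the image of $1$. Statement (2) then combines (1) and (3): a Hopf subalgebra of local $H$ is dual to a Hopf quotient of the connected $H^*$, which is connected by (3), hence local by (1); the quotient case is symmetric.

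For (4), connectedness gives $H_1 = \field \cdot 1 \oplus \Prim(H)$, so
\[
\dim \Prim(H) \;=\; \dim H_1 - 1 \;=\; (\dim H - \dim J^2) - 1 \;=\; \dim J - \dim J^2.
\]
For (5), $\Prim(H)$ is a restricted Lie subalgebra of $H$ under commutator and $p$-th power (the latter preserving primitivity because $(x\otimes 1 + 1\otimes x)^p = x^p \otimes 1 + 1 \otimes x^p$ in characteristic $p$), so the universal property of $u(-)$ yields a Hopf algebra map $\varphi : u(\Prim(H)) \to H$. Surjectivity is the primitively generated hypothesis, and injectivity follows from the standard fact that a Hopf map between connected Hopf algebras that is an isomorphism on primitives is injective (by induction on the coradical filtration, using that the reduced coproduct of an element of $H_{n+1}$ lies in $\sum_{1\le k\le n} H_k \otimes H_{n+1-k}$). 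Statement (6) is the classical restricted PBW theorem, yielding the monomial basis of $u(\mathfrak g)$ with exponents at most $p-1$.

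For (7), the standard argument proceeds by induction on $\dim H$: any nonzero primitive $x \in \Prim(H)$ generates a Hopf subalgebra of $p$-power dimension by (6), so the Nichols--Zoeller freeness theorem ensures $\dim H$ is divisible by a positive power of $p$, and iterating the observation (either directly on $H$ via a suitable Hopf quotient, or on the associated graded $\gr H$) produces $\dim H = p^n$. The main technical obstacle I anticipate is the injectivity in (5): it requires the restricted PBW theorem together with the verification that $\Prim(H)$ is closed under the $p$-th power map; both points are classical but are the real substantive content of the proposition, with the rest being formal consequences of duality.
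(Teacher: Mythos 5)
Your duality framework (coradical filtration versus the $J$-adic filtration of $H^*$, restricted PBW, injectivity of connected-coalgebra maps that are injective on primitives) is the same machinery the paper uses for (1), (4), (5), (6), and those parts are fine. But the quotient half of (3) has a real gap, and it infects your (2). The claim that ``the quotient map sends group-likes to group-likes, so the only group-like of $H/I$ is the image of $1$'' does not give connectedness for two reasons. First, a coalgebra surjection can create group-likes that are not images of group-likes: a group-like of $H/I$ is an algebra map $(H/I)^*\to\field$, and $(H/I)^*$ is only a subalgebra of $H^*$, so such a map need not extend (dually, $M_2(\field)^*$ has no group-likes yet surjects onto the two-group-like coalgebra $(\field\times\field)^*$). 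Second, even a unique group-like is not enough: connectedness requires the entire coradical to be one-dimensional, and a simple subcoalgebra of dimension $>1$ contains no group-likes at all. What you actually need is the theorem that the coradical of a quotient coalgebra lies in the image of the coradical (this is \cite[Cor. 5.3.5]{MO93}, which is exactly what the paper invokes), or, in the finite-dimensional case, the dual argument: $(H/I)^*$ is a unital subalgebra of the local algebra $H^*$ with residue field $\field$, hence local (its intersection with $J(H^*)$ is a nilpotent ideal with quotient $\field$), so $H/I$ is connected by (1). Since you derive the Hopf-subalgebra half of (2) from this quotient case of (3), that half of (2) is also unsupported as written.

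Part (7) is also not closed. Nichols--Zoeller plus the subalgebra $\field[x]\cong u(\langle x\rangle)$ gives that $p$ divides $\dim H$, but divisibility does not ``iterate'': to recurse you would need a smaller \emph{connected Hopf algebra} of dimension $\dim H/p^m$, and $H/H K^+$ is only a coalgebra unless $K$ is normal, which can fail --- in $\field\langle x,y\rangle/([x,y]-y,\,x^p-x,\,y^p)$ (case (5) of Theorem \ref{D2}) the Hopf subalgebra $\field[x]$ is not normal, since $[y,x]=-y\notin\field[x]$. Your alternative ``on the associated graded $\gr H$'' needs the substantive input you never supply: that $\gr_J(H^*)$ (equivalently the graded dual of $\gr H$) is connected and generated in degree one, hence primitively generated, so that (5) and (6) give $\dim H=\dim\gr_J(H^*)=p^n$ directly. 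That is the paper's actual proof of (7); it uses no freeness theorem, and without that input (or the Andruskiewitsch--Schneider generation-in-degree-one lemma quoted in Theorem \ref{generalgrc}) your sketch does not prove that $\dim H$ is a $p$-power.
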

\begin{proof}
$(1)$ and $(4)$ are derived from \cite[Prop. 5.2.9]{MO93}. 

For $(3)$ assume $H$ is connected, $H/I$ is connected by \cite[Cor. 5.3.5]{MO93}, where $I$ is any Hopf ideal of $H$. And for any Hopf subalgebra $K$ of $H$, by \cite[Lemma 5.2.12]{MO93}, $K_0=K\bigcap H_0$. Since $H_0$ is one-dimensional, so is $K_0$. Thus $K$ is connected. 

$(2)$ is the dual version of $(3)$ by $(1)$. 

$(5)$ is a standard result from \cite[Prop. 13.2.3]{S} and $(6)$ comes from \cite[P. 23]{MO93}. 

$(7)$ is true because the associated graded ring $\gr_J(H^*)$ with respect to its $J$-adic filtration is connected and primitively generated. Hence $\dim H=\dim H^*=\dim \gr_J(H^*)=p^n$, where $n=\dim \Prim(\gr_J(H^*))$ by $(6)$. 
\end{proof}
\begin{definition}\label{BDCHA}
Consider an inclusion of finite-dimensional connected Hopf algebras $K\subseteq H$.
\begin{itemize}
\item[(1)]  If $\dim K=p^m$ and $\dim H=p^n$, then the \bf{$p$-index}\rm\ of $K$ in $H$ is defined to be $n-m$.
\item[(2)]  The \bf{first order}\rm\ of the inclusion is defined to be the minimal integer $n$ such that $K_n\subsetneq H_n$. And we say it is infinity if $K=H$.  
\item[(3)]  The inclusion is said to be \bf{level-one}\rm\ if $H$ is generated by $H_n$ as an algebra, where $n$ is the first order of the inclustion. 
\item[(4)]  The inclusion is said to be \bf{normal}\rm\ if $K$ is a normal Hopf subalgebra of $H$.
\end{itemize}
\end{definition}
\begin{remark}\label{BHC}
By \cite[Lemma 5.2.12]{MO93}, if $D$ is a subcoalgebra of $C$, we have $D_n=D\bigcap C_n\subseteq C_n$. Also the coradical filtration is exhaustive for any coalgebra by \cite[Thm. 5.2.2]{MO93}. As a result of \cite[Lemma 5.2.10]{MO93}, a connected bialgebra is automatically a connected Hopf algebra. Furthermore, it is well known that any sub-bialgebra of a connected Hopf algebra is a Hopf subalgebra. Let $H$ be a connected Hopf algebra. Then the algebra generated by each term of the coradical filtration $H_n$ is a connected Hopf subalgebra of $H$. Because each term of the coradical filtration $H_n$ is a subcoalgebra and the algebra generated by it is certainly a sub-bialgebra.
\end{remark}
Throughout the whole paper we will use the following convention: 
\begin{convention}
Define the expression $\omega(x)=\sum_{i=1}^{p-1}\frac{(p-1)!}{i!(p-i)!}\ x^i\otimes x^{p-i}$, where $\frac{(p-1)!}{i!(p-i)!}\in \field$ for each $1\le i\le p-1$.
\end{convention}

\section{Associated graded Hopf algebras for finite-dimensional connected Hopf algebras}
\begin{theorem}\label{generalgrc}
Let $H=\bigoplus_{n=0}^{\infty} H(n)$ be a finite-dimensional connected coradically graded Hopf algebra. Then $H$ is isomorphic to $\field\left[x_1,x_2,\cdots,x_d\right]/\left(x_1^p,x_2^p,\cdots,x_d^p\right)$ for some $d\ge0$ as algebras.
\end{theorem}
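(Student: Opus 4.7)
My plan is to reduce to a classical structure theorem on finite commutative local Hopf algebras in four steps: (a) show that $H$ is commutative as an algebra, (b) deduce that $H$ is local, (c) invoke the structure theorem to obtain $H\cong \field[x_1,x_2,\cdots,x_d]/(x_1^{p^{n_1}},x_2^{p^{n_2}},\cdots,x_d^{p^{n_d}})$ as algebras, and (d) use the coradical grading to force $n_i=1$ for each $i$.

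For step (a), I would induct on the total degree $m+n$ to show $[H(m),H(n)]=0$. Since $\Delta$ is an algebra map, $\Delta([x,y])=[\Delta x,\Delta y]$; writing $\Delta x=x\otimes 1+1\otimes x+\Sigma_x$ with $\Sigma_x\in\bigoplus_{0<i<m}H(i)\otimes H(m-i)$, and similarly for $\Delta y$, I expand $[\Delta x,\Delta y]$ via the identity $[a\otimes b,c\otimes d]=[a,c]\otimes bd+ca\otimes[b,d]$. All terms in the expansion other than $[x\otimes 1,y\otimes 1]$ and $[1\otimes x,1\otimes y]$ vanish (either trivially, from factors commuting across different tensor slots, or by induction, because they involve brackets of combined degree strictly less than $m+n$). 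The two surviving terms contribute $[x,y]\otimes 1+1\otimes[x,y]$, so $[x,y]$ is primitive; since $[x,y]\in H(m+n)$ and $\Prim(H)=H(1)$, we conclude $[x,y]=0$ provided $m+n\geq 2$.

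Step (b) is immediate: $H=\bigoplus_{n\geq 0}H(n)$ is a finite-dimensional commutative graded algebra with $H(0)=\field$, so the augmentation ideal $\mathfrak{m}=\bigoplus_{n\geq 1}H(n)$ is nilpotent and is the unique maximal ideal, making $H$ a finite-dimensional commutative local Hopf algebra. For step (c), I would invoke the classical structure theorem for such Hopf algebras (equivalently, coordinate rings of finite commutative infinitesimal group schemes over an algebraically closed field of characteristic $p>0$, the ``result for algebras representing finite connected group schemes'' alluded to in the introduction): $H\cong \field[x_1,x_2,\cdots,x_d]/(x_1^{p^{n_1}},x_2^{p^{n_2}},\cdots,x_d^{p^{n_d}})$ with $d=\dim_\field\mathfrak{m}/\mathfrak{m}^2$ and each $n_i\geq 1$.

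For step (d), it suffices to show that the Frobenius map $F\colon h\mapsto h^p$ vanishes on $\mathfrak{m}$: then each generator $x_i\in\mathfrak{m}$ already satisfies $x_i^p=0$, forcing $n_i=1$. A parallel induction on the degree $n$ of $y\in H(n)$ does the job: commutativity of $H\otimes H$ together with $\Delta$ being an algebra homomorphism give
\[
\Delta(y^p)=\Delta(y)^p=y^p\otimes 1+1\otimes y^p+\sum (y_i^{(1)})^p\otimes (y_i^{(2)})^p,
\]
and the factors $y_i^{(j)}$ have strictly smaller degree than $y$, so $(y_i^{(j)})^p=0$ by induction. Hence $y^p$ is primitive and lies in $H(pn)$ with $pn\geq 2$, so $y^p=0$. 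The main technical obstacle is step (a): setting up the induction on $m+n$ and tracking the cancellation of every cross commutator; once commutativity is in hand, the remaining steps reduce to a clean invocation of the structure theorem together with the Frobenius calculation that mirrors step (a).
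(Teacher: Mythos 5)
Your proposal is correct, and it reaches the conclusion by a route that differs from the paper in the key first step. The paper obtains commutativity by passing to the graded dual $K=\bigoplus H(n)^*$, citing the Andruskiewitsch--Schneider result that a coradically graded Hopf algebra has graded dual generated in degree one, hence $K$ is cocommutative, hence $H$ is commutative and (by connectedness of $K$) local; you instead prove commutativity directly, by induction on total degree, using the identity $[a\otimes b,c\otimes d]=[a,c]\otimes bd+ca\otimes[b,d]$ to see that all cross terms in $[\Delta x,\Delta y]$ die by the inductive hypothesis, so $[x,y]$ is primitive, and then $\Prim(H)=H(1)$ (this is exactly where the coradical grading enters) forces $[x,y]\in H(1)\cap H(m+n)=0$ for $m+n\ge 2$. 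Your argument is self-contained and more elementary, avoiding both dualization and the cited generation-in-degree-one lemma, at the cost of the bookkeeping in the commutator expansion; the paper's argument is shorter given the citation and gets locality for free from connectedness of the dual, whereas you get locality directly from nilpotence of the augmentation ideal $\mathfrak m=\bigoplus_{n\ge 1}H(n)$. From that point on the two proofs coincide: both invoke Waterhouse's structure theorem to write $H\cong\field[x_1,\dots,x_d]/(x_1^{p^{n_1}},\dots,x_d^{p^{n_d}})$ and both kill the exponents by the same induction showing $y^p$ is primitive of degree $pn\ge 2$, hence zero, for homogeneous $y$ of positive degree. One small point worth a line in a write-up: the generators $x_i$ need not be homogeneous, so to conclude $x_i^p=0$ you pass from homogeneous elements to all of $\mathfrak m$ via additivity of the Frobenius in a commutative ring of characteristic $p$; you phrase the step as ``Frobenius vanishes on $\mathfrak m$,'' which is exactly the right statement, and the same implicit step occurs in the paper.
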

\begin{proof}
Denote by $K=\bigoplus_{n=0}^{\infty} H(n)^*$ the graded dual of $H$. It is a graded Hopf algebra and connected for $K_0\subseteq K(0)=H(0)^*=\field$ by \cite[Lemma 5.3.4]{MO93}. Moreover since $H$ is coradically graded, by \cite[Lemma 5.5]{andruskiewitsch2000finite}, $K$ is generated in degree one and hence cocommutative. Therefore by duality $H$ is commutative and local. Then according to \cite[Thm. 14.4]{GTM66}, $H$ is isomorphic to $\field[x_1,x_2,\cdots,x_d]/(x_1^{p^{n_1}},x_2^{p^{n_2}},\cdots,x_d^{p^{n_d}})$ for some $d\ge 0$ as an algebra. Thus it suffices to prove inductively that for any homogeneous element $x\in H(n)$,  we have $x^p=0$ for all $n\ge 1$. Since $H$ is coradically graded, $\Prim(H)=H(1)$. Then for any $x\in H(1)$, we have $x^p\in (H(1))^p\bigcap H(1)\subseteq H(p)\bigcap H(1)=0$. Assume the assertion holds for $n\le m-1$. Let $x\in H(m)$. By the definition of graded Hopf algebras we have:
\begin{align*}
\Delta(x)=x\otimes 1+1\otimes x+\sum_{i=1}^{m-1}y_i\otimes z_{m-i},
\end{align*}
where $y_i,z_i\in H(i)$ for all $1\le i\le m-1$. Therefore $\Delta(x^p)=x^p\otimes 1+1\otimes x^p+\sum_{i=1}^{m-1}y_i^p\otimes z_{m-i}^p=x^p\otimes 1+1\otimes x^p$ by induction. Thus $x^p\in (H(m))^p\bigcap H(1)\subseteq H(pm)\bigcap H(1)=0$.
\end{proof}
\begin{corollary}\label{connectedgr}
The associated graded Hopf algebra of a finite-dimensional connected Hopf algebra is isomorphic to $\field\left[x_1,x_2,\cdots,x_d\right]/\left(x_1^p,x_2^p,\cdots,x_d^p\right)$ for some $d\ge0$ as algebras.
\end{corollary}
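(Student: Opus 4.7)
The plan is to reduce the corollary to Theorem \ref{generalgrc} by checking that $\gr H$ satisfies its hypotheses: namely, that $\gr H$ is a finite-dimensional connected coradically graded Hopf algebra. Once that is verified, the result is immediate.

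First I would recall the construction $\gr H=\bigoplus_{n\ge 0}H_n/H_{n-1}$ with respect to the coradical filtration $\{H_n\}$, and note that since the coradical filtration of $H$ is exhaustive (by \cite[Thm. 5.2.2]{MO93}, as recorded in Remark \ref{BHC}) and $H$ is finite-dimensional, we have $\dim\gr H=\dim H<\infty$. Next I would observe that $(\gr H)(0)=H_0$ is one-dimensional because $H$ is connected, so $\gr H$ is itself connected. The multiplication and comultiplication of $H$ respect the coradical filtration (because the coradical filtration is a coalgebra filtration and the product satisfies $H_mH_n\subseteq H_{m+n}$ for a Hopf algebra, cf.\ \cite[5.2]{MO93}), so $\gr H$ inherits a graded bialgebra structure; by Remark \ref{BHC}, a connected bialgebra is automatically a Hopf algebra, so $\gr H$ is a graded Hopf algebra.

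The one point that deserves attention is that $\gr H$ is coradically graded, i.e.\ $(\gr H)_n=\bigoplus_{i\le n}(\gr H)(i)$. This is a general coalgebra fact: for any coalgebra $C$, the associated graded coalgebra with respect to its coradical filtration is coradically graded, because the coradical filtration of $\gr C$ agrees with its natural grading. I would cite this (it is contained in the standard references on coradically graded coalgebras, e.g.\ \cite{andruskiewitsch2000finite}) rather than reprove it.

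With these four properties in hand I would apply Theorem \ref{generalgrc} directly to $\gr H$ to conclude that $\gr H\cong \field[x_1,\ldots,x_d]/(x_1^p,\ldots,x_d^p)$ as algebras for some $d\ge 0$. There is no serious obstacle; the only subtle ingredient is the coradical-gradedness of $\gr H$, which is standard but should be explicitly invoked.
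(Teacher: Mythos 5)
Your proposal is correct and follows essentially the same route as the paper: verify that $\gr H$ is a finite-dimensional connected coradically graded Hopf algebra (the paper cites \cite[P. 62]{MO93} for the graded Hopf structure and \cite[Def. 1.13]{Andruskiewitsch02pointedhopf} for coradical gradedness) and then apply Theorem \ref{generalgrc}. Your extra details on finite-dimensionality, connectedness, and the bialgebra-to-Hopf step via Remark \ref{BHC} are fine but do not change the argument.
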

\begin{proof}
The associated graded space $\gr H=\bigoplus_{n\ge 0} H_n/H_{n-1}$ is a graded Hopf algebra by \cite[P. 62]{MO93}. Also mentioned in \cite[Def. 1.13]{Andruskiewitsch02pointedhopf}, $\gr H$ is coradically graded. Therefore $\gr H$ is a coradically graded Hopf algebra, which is clearly connected because $H$ is connected. Hence $\gr H$ satisfies all the conditions in Theorem \ref{generalgrc} and the result follows.
\end{proof}
As a consequence of the commutativity of the associated graded Hopf algebra for any finite-dimensional connected Hopf algebra we conclude that:
\begin{corollary}\label{productcoradical}
Let $H$ be a finite-dimensional connected Hopf algebra. Then $[H_n,H_m]\subseteq H_{n+m-1}$ for all integers $n,m$.
\end{corollary}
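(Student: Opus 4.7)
The plan is to deduce the statement directly from the commutativity of $\gr H$ established in Corollary \ref{connectedgr}. The coradical filtration on $H$ is automatically an algebra filtration, i.e.\ $H_nH_m\subseteq H_{n+m}$ for all $n,m\ge 0$ (this is standard for filtered Hopf algebras whose coradical is a Hopf subalgebra, and in our case the coradical is just $\field$; it follows from \cite[Lemma 5.2.8]{MO93} or a direct application of $\Delta H_n\subseteq \sum_{i+j=n}H_i\otimes H_j$). Negative-index cases are trivial since $H_n=0$ for $n<0$, so it is enough to treat $n,m\ge 0$.

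Given this, the argument is a one-line passage to the associated graded algebra. For any $x\in H_n$ and $y\in H_m$, both $xy$ and $yx$ lie in $H_{n+m}$, and their images $\overline{xy}$ and $\overline{yx}$ in $(\gr H)_{n+m}=H_{n+m}/H_{n+m-1}$ coincide with $\bar{x}\bar{y}$ and $\bar{y}\bar{x}$ respectively, where $\bar{x}\in (\gr H)_n$ and $\bar{y}\in (\gr H)_m$ denote the principal symbols. By Corollary \ref{connectedgr}, the graded algebra $\gr H$ is commutative, so $\bar{x}\bar{y}=\bar{y}\bar{x}$. Hence $xy-yx$ maps to $0$ in $H_{n+m}/H_{n+m-1}$, which means $[x,y]\in H_{n+m-1}$. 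Since this holds for all generators of $H_n$ and $H_m$, we conclude $[H_n,H_m]\subseteq H_{n+m-1}$.

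There is no real obstacle; the only point worth verifying carefully is that the coradical filtration multiplies as expected, and this is immediate from the coproduct compatibility $\Delta H_n\subseteq\sum_{i+j=n}H_i\otimes H_j$ together with the fact that $H$ is generated as a coalgebra by sums of subcoalgebras $H_i$. The proof therefore reduces to citing Corollary \ref{connectedgr} and the elementary graded-commutator principle above.
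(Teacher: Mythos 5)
Your proof is correct and follows exactly the route the paper intends: the corollary is stated there as an immediate consequence of the commutativity of $\gr H$ (Corollary \ref{connectedgr}), with the commutator of filtration degrees $n$ and $m$ dying in $H_{n+m}/H_{n+m-1}$. Your added remark that the coradical filtration of a connected Hopf algebra is an algebra filtration is the only ingredient the paper leaves implicit, and you justify it correctly.
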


\section{Finite-dimensional connected Hopf algebras with Hopf subalgebras}
In this section, we always assume $K\subseteq H$ is an inclusion of finite-dimensional connected Hopf algebras.

\begin{lemma}\label{Contraddim}
Suppose the inclusion $K\subseteq H$ has first order $n$. Then the $p$-index of $K$ in $H$ is greater or equal to $\dim (H_n/K_n)$.
\end{lemma}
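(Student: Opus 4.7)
The plan is to induct on $r := \dim(H_n/K_n) \geq 1$, enlarging $K$ one dimension at a time inside the $n$-th coradical layer. I pick $x \in H_n \setminus K_n$ (which may be assumed to lie in the augmentation ideal $H^+$ after subtracting a scalar) and let $L$ denote the subalgebra of $H$ generated by $K$ and $x$. The first task is to show $L$ is a Hopf subalgebra. Since $x \in H_n \cap H^+$, standard properties of the coradical filtration give $\Delta(x) - x \otimes 1 - 1 \otimes x \in \sum_{i=1}^{n-1} H_i \otimes H_{n-i}$; and the first-order hypothesis says $H_j = K_j$ for every $j < n$, so the whole sum sits inside $K \otimes K \subseteq L \otimes L$. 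Therefore $\Delta(x) \in L \otimes L$, so $L$ is a sub-bialgebra and hence a Hopf subalgebra by Remark \ref{BHC}.

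The second task is to identify $L_n$. Because the coradical filtration of a connected Hopf algebra is an algebra filtration (equivalently, $\gr H$ is a graded algebra, as used already in Corollary \ref{productcoradical}), any monomial in $K \cup \{x\}$ sits in $H_d$ with $d$ equal to the sum of the filtration levels of its factors, where $x$ contributes $n$. Such a monomial lies in $H_n$ only if either no $x$ occurs (giving an element of $K \cap H_n = K_n$) or exactly one $x$ occurs and all remaining factors are in $K_0 = \field$ (giving a scalar multiple of $x$). Thus $L_n = K_n \oplus \field x$ and $\dim(H_n/L_n) = r - 1$.

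The induction then closes as follows. Since $L \supsetneq K$ and both have $p$-power dimension by Proposition \ref{BPH}(7), the Nichols--Zoeller freeness theorem forces $\dim L/\dim K$ to be a power of $p$ strictly greater than $1$, so the $p$-index of $K$ in $L$ is at least $1$. If $r = 1$ this already gives the conclusion. If $r \geq 2$, the inclusion $L \subseteq H$ still has first order $n$ because $L_j$ is sandwiched between $H_j = K_j$ and $H_j$ for $j < n$, while $L_n \subsetneq H_n$. Applying the induction hypothesis to $L \subseteq H$ with parameter $r-1$ gives that the $p$-index of $L$ in $H$ is at least $r-1$, and adding $p$-indices along the tower $K \subseteq L \subseteq H$ produces the desired bound $\geq r$.

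The delicate point---where the first-order hypothesis is essential---is precisely the verification that $L$ is a sub-bialgebra: absent the equality $H_j = K_j$ for $j < n$, the cross-terms of $\Delta(x)$ could stray outside $K \otimes K$ and destroy the recursion. Once $L$ is known to be a Hopf subalgebra, the degree bookkeeping that pins down $L_n$ is routine, and the induction runs immediately.
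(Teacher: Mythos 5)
Your overall induction scheme (adjoin one element $x\in H_n\setminus K_n$, pass to the Hopf subalgebra $L$ generated by $K$ and $x$, and add $p$-indices along the tower $K\subseteq L\subseteq H$) is the same as the paper's, and the steps showing that $L$ is a Hopf subalgebra and that $L\subseteq H$ still has first order $n$ are fine (you do not even need Nichols--Zoeller there: both dimensions are $p$-powers by Proposition \ref{BPH}(7)). The genuine gap is the step you call routine: the identification $L_n=K_n\oplus\field x$. Your bookkeeping only shows that a monomial whose factors have filtration degrees summing to $d$ lies in $H_d$; that is an upper bound, and it does not follow that such a monomial (let alone a linear combination of monomials) fails to lie in the smaller piece $H_n$. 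Products in a filtered algebra can drop filtration degree --- indeed $\gr H\cong\field[x_1,\dots,x_d]/(x_1^p,\dots,x_d^p)$ has many zero divisors, so a product such as $xa$ with $a\in K^+$, nominally in $H_{n+\deg a}$, may well land in $H_n$ --- and $L_n=L\cap H_n$ is computed on arbitrary elements of $L$, not monomial by monomial. Without this identification you only get $\dim(H_n/L_n)\le r-1$, and then closing the induction would require the lemma itself for the pair $K\subseteq L$, which you have not established; so the argument as written does not close.

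The paper neutralizes exactly this point by first reducing the whole statement to the associated graded inclusion $\gr K\subseteq\gr H$: the first order, the $p$-index, and $\dim(H_n/K_n)$ are unchanged under this passage because $(\gr H)_m=\bigoplus_{i\le m}H(i)$ and similarly for $\gr K$. In the graded setting one chooses $x$ homogeneous of degree $n$, $L$ is then a graded subalgebra, and the degree count you wanted becomes legitimate: any monomial containing $x$ together with a positive-degree factor has degree strictly greater than $n$, so $L(n)=K(n)+\field x$ and hence $L_n=K_n+\field x$; moreover Theorem \ref{generalgrc} gives $x^p=0$ there, so the $p$-index of $K$ in $L$ is exactly one (your weaker ``at least one'' would also suffice for the inequality chain). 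To repair your proof, either perform this reduction to $\gr H$ at the outset, or supply an independent argument that $L\cap H_n=K_n+\field x$ in the filtered setting; as it stands, that step is unjustified.
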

\begin{proof}
By Remark \ref{BHC}, the inclusion $K\hookrightarrow H$ induces an injection $K_i/K_{i-1}\hookrightarrow H_i/H_{i-1}$ for all $i\ge 1$. Thus $\gr K=\bigoplus_{i\ge 0} K(i)\hookrightarrow \gr H=\bigoplus_{i\ge 0} H(i)$ and $K(i)=H(i)$ for all $0\le i\le n-1$ since $n$ is the first order of the inclusion. Moreover by \cite[Def. 1.13]{Andruskiewitsch02pointedhopf}, $\left(\gr H\right)_m=\bigoplus_{0\le i\le m} H(m)$ for all $m\ge 0$ and the same is true for $\gr K$. Therefore it is enough to prove the result in the associated graded Hopf algebras inclusion $\gr K\subseteq \gr H$. 

For simplicity, we write $K$ for $\gr K$, $H$ for $\gr H$ and use $\degree(H/K)$ to denote the $p$-index of $K$ in $H$. We will prove the result by induction on $\dim (H_n/K_n)$. When $\dim (H_n/K_n)=1$, it is trivial. Now suppose that $\dim(H_n/K_n)>1$ and choose any $x\in H(n)\setminus K(n)$. Because $H$ is a graded coalgebra,
\begin{align*}
\Delta(x)=x\otimes 1+1\otimes x+\sum_{i=1}^{n-1}y_i\otimes z_{n-i},
\end{align*}
where $y_i,z_i\in H(i)=K(i)$ for all $1\le i\le n-1$. Hence $K$ and $x$ generate a Hopf subalgebra of $H$ by Remark \ref{BHC}, which we denote as $L$. Now according to Theorem \ref{generalgrc}, we have $x^p=0$. Thus $K\subseteq L$ has $p$-index one and first order $n$. Because $H$ is a graded algebra, it is clear that $L_n$ is spanned by $K_n$ and $x$. Hence $\dim (L_n/K_n)=1$ and $\dim(H_n/L_n)=\dim (H_n/K_n)-1$. Therefore by induction we have 
\begin{align*}
\dim (H_n/K_n)&=\dim (H_n/L_n)+\dim (L_n/K_n)=\dim (H_n/L_n)+1\\
&\le \degree(H/L)+1=\degree(H/L)+\degree(L/K)=\degree (H/K).
\end{align*}
\end{proof}
\begin{lemma}\label{normality}
Let $K\subseteq H$ be a level-one inclusion with first order $n$. Then $K$ is normal in $H$ if and only if $[K,H_n]\subseteq K$.
\end{lemma}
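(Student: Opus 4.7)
The plan is to prove the two implications separately, exploiting the coalgebra-side hypothesis (first order $n$, so $K_i=H_i$ for all $i<n$) for the forward direction, and the algebra-side hypothesis (level-one, so $H=\sum_{m\ge 0}(H_n)^m$) for the backward direction.

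For the forward direction, I will fix $h\in H_n\cap\Ker\e$ and $k\in K$ and unwind the left adjoint action $\ad_l(h)(k)=\sum h_1 k S(h_2)$, which lies in $K$ by normality. Since $\Delta(H_n)\subseteq\sum_{i+j=n}H_i\otimes H_j$ and $H_i=K_i$ for all $i<n$, the reduced coproduct $\Delta(h)-h\otimes 1-1\otimes h$ lies in $K\otimes K$. Combined with the counit identity $\sum h_1 S(h_2)=\e(h)=0$ and the fact that $S$ restricts to an antipode on the Hopf subalgebra $K$, this forces $S(h)\equiv -h\pmod{K}$. Substituting these congruences into $\ad_l(h)(k)$ yields $\ad_l(h)(k)\equiv hk-kh\pmod K$, so $[h,k]\in K$. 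Since the constants in $H_n$ contribute trivial commutators, I conclude $[K,H_n]\subseteq K$.

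For the backward direction, I will use the standard characterization that $K$ is normal in $H$ if and only if $HK^+=K^+H$, where $K^+=K\cap\Ker\e$. The hypothesis $[K,H_n]\subseteq K$, together with $\e([h,k])=0$, shows $[h,k]\in K^+$ for all $h\in H_n$ and $k\in K^+$. Rewriting $hk=kh+[h,k]$ then gives $H_nK^+\subseteq K^+H$. A direct induction on $m$ using $H_n\cdot K^+H\subseteq K^+H$ extends this to $(H_n)^m K^+\subseteq K^+H$, and the level-one hypothesis $H=\sum_m(H_n)^m$ upgrades it to $HK^+\subseteq K^+H$. A symmetric argument produces the reverse inclusion, so $HK^+=K^+H$ and $K$ is normal in $H$.

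The main technical obstacle I anticipate is in the forward direction: one must verify that every ``lower order'' term in the expansion of $\ad_l(h)(k)$ actually lands in $K$. This reduces to two facts: $H_{n-1}=K_{n-1}\subseteq K$ (from the first-order assumption) and that $S$ preserves the coradical filtration. The bookkeeping must be carried out carefully, since $H_n$ itself is not contained in $K$ and so the apex terms $hk$ and $kS(h)$ in the adjoint expansion cannot individually be placed inside $K$; only their sum modulo $K$ can be controlled.
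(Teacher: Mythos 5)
Your proposal is correct and follows essentially the same route as the paper: the forward direction uses that the reduced coproduct of an element of $H_n$ lands in $K_{n-1}\otimes K_{n-1}\subseteq K\otimes K$ (via the first-order hypothesis), hence $S(h)\equiv \e(h)-h \pmod K$, and expands the adjoint action to extract $[h,k]$ modulo $K$; the backward direction induces on powers of $H_n$ to get $HK^+=K^+H$ and invokes the normality criterion of \cite[Cor. 3.4.4]{MO93}. The only differences are cosmetic (you argue modulo $K$ and prove $HK^+\subseteq K^+H$ first rather than $K^+H\subseteq HK^+$), so no further comment is needed.
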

\begin{proof}
First suppose that $K$ is normal in $H$. By \cite[Lemma 5.3.2]{MO93} for any $x\in H_n$, $\Delta(x)-x\otimes 1-1\otimes x\in H_{n-1}\otimes H_{n-1}=K_{n-1}\otimes K_{n-1}\subseteq K\otimes K$. Thus we can write $\Delta(x)=x\otimes 1+1\otimes x+\sum a_i\otimes b_i$ where $a_i,b_i\in K$. Apply the antipode $S$ to get
\begin{align*}
S(x)=\e(x)-x-\sum a_iS(b_i).
\end{align*}
By the definition of normal Hopf subalgebras \cite[Def. 3.4.1]{MO93}, for any $y\in K$
\begin{align*}
\sum x_1yS(x_2)=xy+yS(x)+\sum a_iyS(b_i)=u\in K.
\end{align*}
Therefore 
\begin{align*}
[y,x]=yx-xy=y\left(\e(x)-\sum a_iS(b_i)\right)+\sum a_iyS(b_i)-u\subseteq K,
\end{align*}
which shows that $[K,H_n]\subseteq K$. Conversely suppose that $[K,H_n]\subseteq K$. Then it is clear that $K^+H_n\subseteq H_nK^++K^+\subseteq HK^+$ since $[K^+,H_n]\subseteq K^+$. We claim that $K^+(H_n)^i\subseteq HK^+$ for all $i\ge 0$ by induction. Suppose the inclusion holds for $i$ and then for $i+1$:
\begin{align*}
K^+\left(H_n\right)^{i+1}=K^+\left(H_n\right)^iH_n\subseteq \left(HK^+\right)H_n\subseteq H\left(HK^+\right)\subseteq HK^+.
\end{align*}
Therefore $K^+H=\bigcup K^+(H_n)^i\subseteq HK^+$ and by symmetry $K^+H=HK^+$. According to \cite[Cor. 3.4.4]{MO93}, $K$ is normal. 
\end{proof}

\begin{lemma}\label{normalcom}
If $x\in H$ satisfies $[K,x]\subseteq K$ and $\Delta(x)-x\otimes 1-1\otimes x \in K\otimes K$, then $\Delta\left(x^{p^n}\right)-x^{p^n}\otimes 1-1\otimes x^{p^n}\in K\otimes K$ for all $n\ge 0$.
\end{lemma}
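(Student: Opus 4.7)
The plan is to induct on $n$ and carry along both hypotheses simultaneously: $\bar\Delta(x^{p^n}):=\Delta(x^{p^n})-x^{p^n}\otimes 1-1\otimes x^{p^n}\in K\otimes K$ and $[K,x^{p^n}]\subseteq K$. The base case $n=0$ is the given data, and for the step I would set $y=x^{p^n}$ and $c=\bar\Delta(y)\in K\otimes K$, then compute $\Delta(y^p)=\Delta(y)^p=(y\otimes 1+1\otimes y+c)^p$ inside the associative algebra $H\otimes H$.

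The workhorse is Jacobson's $p$-th power formula: in any associative algebra of characteristic $p$, $(\alpha+\beta)^p=\alpha^p+\beta^p+\Lambda_p(\alpha,\beta)$, where $\Lambda_p(\alpha,\beta)=\sum_{i=1}^{p-1}s_i(\alpha,\beta)$ is a Lie polynomial whose components $s_i$ are homogeneous of degree $i$ in $\alpha$ and $p-i$ in $\beta$, so in particular every monomial contains at least one $\beta$. Applying this with $\alpha=y\otimes 1+1\otimes y$ and $\beta=c$, the summands $y\otimes 1$ and $1\otimes y$ commute, so the Freshman's Dream gives $\alpha^p=y^p\otimes 1+1\otimes y^p$. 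It remains to put $c^p+\Lambda_p(\alpha,c)$ into $K\otimes K$.

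The key observation is that $K\otimes K$ is a subalgebra of $H\otimes H$, so $c^p\in K\otimes K$, and moreover $\ad(\alpha)$ preserves $K\otimes K$: the induction hypothesis $[y,K]\subseteq K$ gives $[y\otimes 1,K\otimes K]\subseteq K\otimes K$ and $[1\otimes y,K\otimes K]\subseteq K\otimes K$. Because each monomial of $\Lambda_p(\alpha,c)$ is an iterated commutator that contains at least one $c\in K\otimes K$, applying $\ad(\alpha)$ or $\ad(c)$ iteratively to $c$ keeps us inside $K\otimes K$, so $\Lambda_p(\alpha,c)\in K\otimes K$. This yields $\bar\Delta(y^p)\in K\otimes K$.

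Finally, to feed the induction one more time, I would verify $[K,y^p]\subseteq K$: in an associative algebra in characteristic $p$ the identity $\ad(y^p)=(\ad y)^p$ holds, and since $(\ad y)(K)\subseteq K$ by the induction hypothesis, iterating gives $(\ad y)^p(K)\subseteq K$. I expect the main subtlety to be a clean bookkeeping of Jacobson's formula---specifically the homogeneity that forces every term of $\Lambda_p(\alpha,c)$ to involve a factor of $c$, which is what allows the $K\otimes K$-invariance argument to close.
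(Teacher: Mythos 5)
Your proposal is correct and follows essentially the same route as the paper: induct on $n$, expand $\Delta(x^{p^n})^p$ with Jacobson's formula (Lemma \ref{palgebra}), observe that every correction term is a Lie monomial involving the element of $K\otimes K$ and that $\ad(y\otimes 1+1\otimes y)$ and $\ad(c)$ preserve $K\otimes K$, and propagate $[K,x^{p^n}]\subseteq K$ via $\ad(y^p)=(\ad y)^p$. The only cosmetic difference is that the paper runs an inner induction on the iterated brackets $u(\ad(\lambda u+x\otimes 1+1\otimes x))^n\in (K\otimes K)[\lambda]$ starting from $u$, while you invoke the homogeneity of the $s_i$ to see each monomial contains a factor in $K\otimes K$; both close the argument in the same way.
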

\begin{proof}
First, we prove $\Delta \left(x^{p}\right)-x^{p}\otimes 1-1\otimes x^{p}\in K\otimes K$. Denote $\Delta(x)=x\otimes 1+1\otimes x+u$, where $u\in K\otimes K$. By Lemma \ref{palgebra}, we have:
\begin{align*}
\Delta\left(x^{p}\right)=\left(x\otimes 1+1\otimes x+u\right)^p=x^p\otimes 1+1\otimes x^{p}+u^p+\sum_{i=1}^{p-1}S_i
\end{align*}
where $iS_i$ is the coefficient of $\lambda^{i-1}$ in $u\left(\ad\left(\lambda u+x\otimes 1+1\otimes x\right)\right)^{p-1}$. Hence it suffices to show inductively that 
\begin{align*}
u\left(\ad\left(\lambda u+x\otimes 1+1\otimes x\right)\right)^n\in \left(K\otimes K\right)[\lambda]
\end{align*}
for all $n\ge 0$. Notice that when $n=0$, it is just the assumption. Suppose it's true for $n-1$ then for $n$
\begin{align*}
u\left(\ad\left(\lambda u+x\otimes 1+1\otimes x\right)\right)^n&\in\left[\left(K\otimes K\right)[\lambda],\lambda u+x\otimes 1+1\otimes x \right]\\
&\subseteq\left\{\left[K\otimes K, u\right]+[K,x]\otimes K+K\otimes [K,x]\right\}[\lambda]\\
&\subseteq \left(K\otimes K\right)[\lambda].
\end{align*}
Now replace $x$ with $x^{p^{n-1}}$ and we have $[K,x^{p^{n-1}}]=K\left(\ad(x)\right)^{p^{n-1}}\subseteq K$ by Lemma \ref{palgebra}. Then the other cases can be proved in the similar way.
\end{proof}

\begin{lemma}\label{subHopfLn}
If $x\in H$ satisfies $\Delta(x)-x\otimes 1-1\otimes x\in K\otimes K$ and $[K,x]\subseteq \sum_{0\le i\le 1}Kx^i$. For each $n\ge 0$, set $L_n=\sum_{i\le n} K x^i$. Then we have the following 
\begin{itemize}
\item[(1)] $[K,x^n]\subseteq L_n$ and $L_n$ is a $K$-bimodule via the multiplication in $H$.
\item[(2)] $\Delta(x^n)-x^n\otimes 1-1\otimes x^n\in L_{n-1}\otimes L_{n-1}$.
\item[(3)] $L_n$ is a subcoalgebra of $H$.
\item[(4)] If $H$ is generated by $K$ and $x$ as an algebra, then $H=\bigcup_{n\ge 0} L_n$.
\end{itemize}
 \end{lemma}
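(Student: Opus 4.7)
My plan is to treat the four items in order, with a single induction on $n$ driving items (1) and (2) simultaneously, and then items (3) and (4) following as corollaries.

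For (1), I induct on $n$. The cases $n=0,1$ are immediate from the hypotheses $[K,1]=0$ and $[K,x]\subseteq L_1$. For the step I use the derivation identity $[k,x^{n+1}]=[k,x]x^n+x[k,x^n]$. The first summand lies in $L_1 x^n\subseteq L_{n+1}$, so I only have to control $x L_n$. Since $xK\subseteq Kx+[x,K]\subseteq L_1$, I get $xKx^i\subseteq L_1 x^i\subseteq L_{i+1}$, hence $xL_n\subseteq L_{n+1}$ and the induction closes. The bimodule claim is then immediate: left-multiplication by $K$ preserves each $Kx^i$, and for the right action I rewrite $x^iK=Kx^i+[x^i,K]\subseteq Kx^i+L_i\subseteq L_i$ using (1), so $L_nK\subseteq \sum_{i\le n}KL_i\subseteq L_n$.

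For (2), I again induct on $n$, writing $u=\Delta(x)-x\otimes 1-1\otimes x\in K\otimes K$ and $v_n=\Delta(x^n)-x^n\otimes 1-1\otimes x^n$. Expanding $\Delta(x^{n+1})=\Delta(x^n)\Delta(x)$ produces the terms $x^n\otimes x$, $x\otimes x^n$, $(x^n\otimes 1)u$, $(1\otimes x^n)u$, $v_n(x\otimes 1)$, $v_n(1\otimes x)$, and $v_nu$. Using part (1), especially $x^nK\subseteq L_n$ and the bimodule property, together with the inductive bound $v_n\in L_{n-1}\otimes L_{n-1}$, every one of these factors individually sits in $L_n\otimes L_n$, which gives the desired containment $v_{n+1}\in L_n\otimes L_n$.

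For (3), it suffices to verify $\Delta(Kx^i)\subseteq L_n\otimes L_n$ for each $i\le n$. Writing $\Delta(k)=\sum k_{(1)}\otimes k_{(2)}\in K\otimes K$ and $\Delta(x^i)=x^i\otimes 1+1\otimes x^i+v_i$ with $v_i\in L_{i-1}\otimes L_{i-1}$ by (2), I multiply out $\Delta(kx^i)=\Delta(k)\Delta(x^i)$ and observe that every resulting tensor factor is a product of an element of $K$ with an element of some $L_j$ for $j\le i\le n$, and so lies in $L_n$ because $L_n$ is a left $K$-module by (1). For (4), I set $L=\bigcup_n L_n=\sum_{i\ge 0} Kx^i$; this contains $K$ and $x$, and is closed under multiplication because $Kx^i\cdot Kx^j\subseteq K(x^iK)x^j\subseteq K\cdot L_i\cdot x^j\subseteq L_ix^j\subseteq L_{i+j}$, once more using $x^iK\subseteq L_i$ from (1). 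Hence $L$ is a subalgebra containing the generators of $H$, so $L=H$.

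The only real obstacle is part (1): everything downstream relies on knowing $[K,x^n]\subseteq L_n$ so that one can freely commute powers of $x$ past $K$ at the cost of lower-order terms. The rest is bookkeeping in the tensor-product algebra, and the hypothesis $[K,x]\subseteq K+Kx$ is exactly what makes the induction step for (1) close.
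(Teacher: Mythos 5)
Your proof is correct and follows essentially the same route as the paper: the same induction on $n$ using $xK\subseteq K+Kx$ to get $[K,x^n]\subseteq L_n$ (your derivation identity $[k,x^{n+1}]=[k,x]x^n+x[k,x^n]$ is just a repackaging of the paper's computation of $x^na$), the same expansion of $\Delta(x^n)$ as a product for (2), and (3)--(4) as bookkeeping consequences, which you spell out in more detail than the paper does.
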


\begin{proof}
$(1)$ Since $xL_n\subseteq L_{n+1}$, we have $x^nL_1\subseteq L_{n+1}$ for all $n\ge 0$. By assumption, it holds that $[K,x]\subseteq L_1$. Suppose $[K,x^{n-1}]\subseteq L_{n-1}$. For any $a\in K$, it follows that 
\begin{eqnarray*}
x^na\in x^{n-1}\left(ax+L_1\right)\subseteq \left(ax^{n-1}+L_{n-1}\right)x+x^{n-1}L_1\subseteq ax^n+L_n.
\end{eqnarray*}
Hence $[K,x^n]\subseteq L_n$ for each $n\ge 0$. Moreover, we have $L_nK\subseteq L_n$ for each $n\ge 0$, the left $K$-module $L_n$ now becomes $K$-bimodule.

$(2)$ Denote $\Delta(x)=x\otimes 1+1\otimes x+u$, where $u\in K\otimes K$. We still prove by induction. When $n=1$, it is just the assumption. Suppose it's true for $n-1$. Write $\Delta(x^{n-1})=x^{n-1}\otimes 1+1\otimes x^{n-1}+\sum a_i\otimes b_i$, where $a_i,b_i\in L_{n-2}$. Therefore
\begin{align*}
&\Delta(x^n)-x^n\otimes 1-1\otimes x^n\\
&= \left(x\otimes 1+1\otimes x+u\right)\left(x^{n-1}\otimes 1+1\otimes x^{n-1}+\sum a_i\otimes b_i\right)-x^n\otimes 1-1\otimes x^n\\\
&\in x\otimes x^{n-1}+x^{n-1}\otimes x+xL_{n-2}\otimes L_{n-2}+L_{n-2}\otimes xL_{n-2}+L_{n-2}\otimes L_{n-2}\\
&\subseteq L_{n-1}\otimes L_{n-1}.
\end{align*}

$(3)$ Now because of $(1)$ and $(2)$, it is enough to check that $L_n$ is a coalgebra by induction. 

$(4)$ Furthermore if $H$ is generated by $K$ and $x$ as an algebra, it is easy to see $H=\bigcup_{n\ge 0} L_n$.
\end{proof}

\begin{theorem}\label{FREENESS}
Let $H$ be a finite-dimensional connected Hopf algebra with Hopf subalgebra $K$. Suppose the $p$-index of $K$ in $H$ is $d$ and $H$ is generated by $K$ and some $x\in H$ as an algebra. Also assume that $\Delta(x)=x\otimes 1+1\otimes x+u$, where $u\in K\otimes K$ and $[K,x]\subseteq \sum_{0\le i\le 1}Kx^i$. Then $H$ is a free left $K$-module such that $H=\bigoplus_{i=0}^{p^d-1} Kx^i$. Furthermore if $K$ is normal in $H$, then $x$ satisfies a polynomial equation as follows:
\begin{align*}
x^{p^d}+\sum_{i=0}^{d-1}a_ix^{p^i}+b=0
\end{align*} 
for some $a_i\in \field$ and $b\in K$. 
\end{theorem}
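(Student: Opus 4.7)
The plan is to handle the freeness claim $H=\bigoplus_{i=0}^{p^d-1}Kx^i$ first, then derive the polynomial equation under the additional normality hypothesis; the filtration $L_n=\sum_{i\le n}Kx^i$ from Lemma~\ref{subHopfLn} is the main organizing tool throughout.

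For freeness, I would first observe that $\{L_n\}$ is an ascending chain of $K$-bimodule subcoalgebras with $H=\bigcup_{n\ge 0} L_n$ by parts~(1)--(4) of Lemma~\ref{subHopfLn}. Let $N$ be the smallest integer with $x^{N+1}\in L_N$. Then $L_N$ is closed under right-multiplication by $x$ (by minimality of $N$) and under left-multiplication by $K$ (being a $K$-bimodule), and contains $1$, so $L_N=H$ and $\{1,x,\ldots,x^N\}$ generates $H$ as a left $K$-module. By the Nichols--Zoeller freeness theorem, $H$ is free of rank $p^d$ over $K$, so $\dim H=p^d\dim K$; combined with $\dim L_n/L_{n-1}\le\dim K$, this yields $N+1\ge p^d$. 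For the reverse inequality $x^{p^d}\in L_{p^d-1}$, I would pass to the associated graded $\gr H\cong\field[y_1,\ldots,y_n]/(y_i^p)$, which is commutative by Corollary~\ref{productcoradical} and of total $p$-index $d$ over $\gr K$. The key observation is that the Frobenius-iterated principal symbols $\sigma(x^{p^i})$ for $0\le i\le d-1$ supply exactly $d$ new commuting generators of $\gr H$ over $\gr K$, each satisfying $y^p=0$; by the truncated-polynomial structure, $\sigma(x^{p^d})$ must then lie in the $\gr K$-span of $\{\sigma(x^j):0\le j\le p^d-1\}$, and lifting this relation through the coradical filtration yields $x^{p^d}\in L_{p^d-1}$. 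Combined, $N+1=p^d$ and each $\dim L_n/L_{n-1}=\dim K$, giving the claimed direct sum.

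For the polynomial equation, assume $K$ is normal in $H$ and, after replacing $x$ by $x-\e(x)$, that $\e(x)=0$; then $u\in K^+\otimes K^+$, and Lemma~\ref{normalcom} yields $\Delta(x^{p^n})-x^{p^n}\otimes 1-1\otimes x^{p^n}\in K^+\otimes K^+$ for every $n\ge 0$. The quotient $B=H/HK^+$ is a Hopf algebra of dimension $p^d$ (by freeness and normality), generated as an algebra by the primitive element $\bar x$ with each $\bar x^{p^n}$ also primitive; hence $B$ is commutative, primitively generated, connected of dimension $p^d$, and must be isomorphic to $\field[\bar x]/(f(\bar x))$ for a monic $p$-polynomial $f(T)=T^{p^d}+\sum_{i=0}^{d-1}a_iT^{p^i}$ of degree $p^d$ (since the generator of the defining ideal must itself be primitive). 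Lifting gives $y:=x^{p^d}+\sum_{i=0}^{d-1}a_ix^{p^i}\in HK^+$.

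To finish, I would show $y\in K$, whence $y\in K\cap HK^+=K^+$ and $b:=-y\in K$ gives the relation. Using the right $B$-coaction $\rho=(\mathrm{id}\otimes\pi)\Delta:H\to H\otimes B$ with coinvariants $H^{\mathrm{co}B}=K$ (faithfully flat descent for the normal extension $K\subseteq H$), the computation $\rho(y)=y\otimes 1$ follows from $\Delta(y)=y\otimes 1+1\otimes y+\tilde u$ with $\tilde u\in K^+\otimes K^+$ together with $\pi(y)=0$ and $\pi(K^+)=0$; hence $y\in K$. The main obstacle is the reverse inequality in the freeness step: correctly identifying the Frobenius-iterated symbols $\{\sigma(x^{p^i})\}_{i=0}^{d-1}$ as the ``missing'' $d$ generators of $\gr H$ over $\gr K$ requires a careful interplay between the $L_n$-filtration and the coradical filtration, together with the commutativity of $\gr H$.
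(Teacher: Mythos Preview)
Your argument for the polynomial equation under normality is correct and cleaner than the paper's: instead of applying $\Delta$ to a generic relation and comparing coefficients in the free $K\otimes K$-module $H\otimes H$ (which is what the paper does), you pass to the quotient $B=H/HK^+$, identify it as $\field[\bar x]/(f)$ for a $p$-polynomial $f$ (since $B$ is connected, primitively generated by $\bar x$, and every $\bar x^{p^n}$ is primitive), and then pull the relation back via $H^{\mathrm{co}\,B}=K$. One small omission: before invoking Lemma~\ref{normalcom} you should note that normality forces $[K,x]\subseteq K$ (by the computation in the proof of Lemma~\ref{normality}), since the theorem's hypothesis only gives $[K,x]\subseteq K+Kx$.

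The freeness argument, however, has a real gap precisely where you flag it. The claim that the principal symbols $\sigma(x^{p^i})$ for $0\le i\le d-1$ supply $d$ independent new generators of $\gr H$ over $\gr K$ is unproved; and even if granted, a relation for $\sigma(x^{p^d})$ in $\gr H$ only says that $x^{p^d}$ agrees with a $K$-combination of lower powers of $x$ modulo a lower term of the \emph{coradical} filtration, not modulo $L_{p^d-1}$. The two filtrations are not comparable in the direction you need, so the lifting step does not go through. The paper avoids this completely: it takes a minimal-degree left $K$-linear relation $d_mx^m+\cdots+d_0=0$ with $d_m\neq 0$, and applies \cite[Lemma~1.1]{wang2011lower} to $D=K$, $L=L_{m-1}$, $F=x^m$, and $V=\{a\in D:aF\in L\}$. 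The coalgebraic hypotheses of that lemma (in particular $\Delta(F)-F\otimes 1-1\otimes F\in L\otimes L$, supplied by Lemma~\ref{subHopfLn}(2)) force $V=D$ once $0\neq d_m\in V$; hence $x^m\in L_{m-1}$. This immediately shows $\{1,x,\dots,x^{m-1}\}$ is a free $K$-basis, and then $m=p^d$ by a dimension count. No passage to $\gr H$ and no appeal to Nichols--Zoeller is needed.
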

\begin{proof}
Denote $L_n=\sum_{0\le i\le n} Kx^i$ for all $n\ge 0$. By the Lemma \ref{subHopfLn}(3), $L_n$ is a subcoalgebra. Also $H$  is a left $K$-module with generators $\{x^i|i\ge 0\}$ for $H=\sum Kx^i$. Because $H$ is finite-dimensional, there exist some nontrivial relations between the generators such as
\begin{align*}
d_mx^m+d_{m-1}x^{m-1}+\cdots+d_1x+d_0=0,
\end{align*}
where $d_i\in K$ and $d_m\neq 0$, among which we choose the lowest degree in terms of $x$, say degree $m$. Furthermore denote $D=K$, $L=L_{m-1}$, $F=x^m$ and $V=\{a\in D|aF\in L\}$. As a result of Lemma \ref{subHopfLn}(2), we know $\Delta(F)-x^m\otimes 1-1\otimes x^m\in L\otimes L$. Then $D,L,F$ satisfy all the conditions listed in \cite[Lemma 1.1]{wang2011lower}. Hence $V=D$ for $0\neq d_m\in V$. Thus $x^{m}\in \bigoplus_{i<m} Kx^i$ and consequently $H$ is a free left $K$-module with the free basis $\{x^i|0\le i\le m-1\}$. Since $\dim H=m\dim K$, it is easy to see $m=p^d$ by definition.

Now assume that $K$ is normal. Follow the proof in Lemma \ref{normality}, we can show that $[K,x]\subseteq K$. From pervious discussion there exists a general equation for $x$:
\begin{align}\label{SRE}
x^{p^d}+\sum_{i=0}^ {p^d-1}a_ix^{i}=0,
\end{align}
where all $a_i\in K$. According to Lemma \ref{normalcom}, we can write $\Delta\left(x^{p^n}\right)=x^{p^n}\otimes 1+1\otimes x^{p^n}+u_n$, where $u_n\in K\otimes K$ for all $n\ge 0$. Now apply the comultiplication $\Delta$ to the above identity \eqref{SRE} to get
\begin{equation*}
x^{p^d}\otimes 1+1\otimes x^{p^d}+u_d+\sum_{i=0}^{p^d-1}\Delta(a_i)(x\otimes 1+1\otimes x+u)^{i}=0.
\end{equation*}
Replacing $x^{p^d}$ with $\left(-\sum_{i=0}^{p^d-1} a_{i}x^{i}\right)$, the following equation is straightforward: 
\begin{gather}\label{EQ1}
\left(-\sum_{i=0}^{p^d-1} a_{i}x^{i}\right)\otimes 1+1\otimes \left(-\sum_{i=0}^{p^d-1} a_{i}x^{i}\right)\\
+\sum_{i=0}^{d-1}\Delta\left(a_{p^i}\right)\left(x^{p^i}\otimes 1+1\otimes x^{p^i}+u_i\right)+\sum_{i\in S}\Delta\left(a_i\right)\left(x\otimes 1+1\otimes x+u\right)^{i}+\Delta(a_0)+u_d=0\nonumber
\end{gather}
with the $p$-index set $S=\{1,2,\cdots,p^d\}\setminus \{1,p,p^2,\cdots,p^d\}$. 

We first prove that $a_i=0$ for all $i\in S$ by contradiction. If not, suppose $n\in S$ is the largest integer such that $a_n\neq 0$. The free $K$-module structure for $H$ implies that the $K\otimes K$-module $H\otimes H$ has a free basis $\left\{x^{i}\otimes x^{j}|0\le i,j<p^d\right\}$. Thus the term $Kx^{n-i}\otimes Kx^i$ would only come from $\Delta\left(a_{n}\right)\left(x\otimes 1+1\otimes x+u\right)^{n}$ for all $1\le i\le n-1$. Moreover it exactly comes from $\Delta\left(a_{n}\right)\left(x\otimes 1+1\otimes x\right)^{n}$ by the choice of $n$. Therefore ${n\choose i}\Delta\left(a_{n}\right)\left(x^{n-i}\otimes x^i\right)=0$ for all $1\le i\le n-1$. Suppose $n=p^\alpha m$ where $m>1$ and $m\not\equiv 0 \pmod p$. Choose $i=p^\alpha$. Hence by \cite[Lemma 5.1]{isaacs1994algebra}, ${n \choose p^\alpha}\equiv m \pmod p$. Then $\Delta(a_{n})=0$, which implies that $a_n=0$, a contradiction. Therefore from equation \eqref{EQ1}, we deduce that $\Delta(a_{p^i})(x^{p^i}\otimes 1)=a_{p^{i}}x^{p^i}\otimes 1$ for all $0\le i\le d-1$. Thus $\Delta(a_{p^i})=a_{p^i}\otimes 1$. Then since $H$ is counital, all of $a_{p^i}$ are coefficients in the base field $\field$.
\end{proof}

\section{Finite-dimensional cocommutative connected Hopf algebras}
Notice that the following lemma holds over any arbitrary base field. In the remaining of this section, we still assume $\field$ to be algebraically closed of characteristic $p>0$.
\begin{lemma}\label{commNAI}
Let $H$ be a finite-dimensional Hopf algebra with normal Hopf subalgebras $K\subseteq L\subseteq H$. Then there exists a natural isomorphism:
\begin{equation*}
\left(H/K^+H\right)^*\Big/\left(H/L^+H\right)^{*+}\left(H/K^+H\right)^*\cong \left(L/K^+L\right)^*.
\end{equation*}
\end{lemma}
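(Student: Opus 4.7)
The plan is to construct a short exact sequence of finite-dimensional Hopf algebras
\[
\field \longrightarrow L/K^+L \longrightarrow H/K^+H \longrightarrow H/L^+H \longrightarrow \field
\]
and then dualize it, so that the claimed natural isomorphism follows from the standard description of a normal Hopf subalgebra in terms of its augmentation ideal.

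As a first step, I would verify that the three algebras displayed above are genuinely Hopf algebras and that the maps make sense. Because $K$ is normal in $H$, $K^+H=HK^+$ is a Hopf ideal and $H/K^+H$ is a Hopf algebra (\cite[Cor. 3.4.4]{MO93}); the same applies to $L/K^+L$ (note $K$ is normal in $L$ since it is normal in $H$) and to $H/L^+H$. The composition $L\hookrightarrow H\twoheadrightarrow H/K^+H$ kills $K^+L$, yielding a Hopf algebra map $\bar{\iota}:L/K^+L\to H/K^+H$, and the second arrow $H/K^+H\twoheadrightarrow H/L^+H$ is well defined because $K^+H\subseteq L^+H$ (as $K\subseteq L$).

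Next I would check the three exactness conditions. Injectivity of $\bar{\iota}$ is the main coalgebra-theoretic subtlety: in the finite-dimensional setting, $H$ is a free $L$-module and $L$ is a free $K$-module by Nichols--Zoeller, so $K^+H\cap L=K^+L$, which yields $\ker\bar{\iota}=0$. The image of $\bar{\iota}$ is normal in $H/K^+H$ because normality of $L$ in $H$ passes to the quotient, and the second map is surjective with kernel exactly $\bar{\iota}(L/K^+L)^+(H/K^+H)=L^+H/K^+H$, so the third-isomorphism theorem identifies the quotient $(H/K^+H)/(L^+H/K^+H)$ with $H/L^+H$.

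Finally, dualizing a short exact sequence of finite-dimensional Hopf algebras produces another short exact sequence with the direction of the arrows reversed:
\[
\field \longrightarrow (H/L^+H)^* \longrightarrow (H/K^+H)^* \longrightarrow (L/K^+L)^* \longrightarrow \field.
\]
Here $(H/L^+H)^*$ is a normal Hopf subalgebra of $(H/K^+H)^*$, and by the standard equivalence between exact sequences of finite-dimensional Hopf algebras and quotients by right augmentation ideals (again \cite[Cor. 3.4.4]{MO93}), the quotient on the right is precisely $(H/K^+H)^*/(H/L^+H)^{*+}(H/K^+H)^*$, giving the claimed natural isomorphism. I expect the main obstacle to be the faithful-flatness input needed for the injectivity of $\bar{\iota}$; once this is in hand, the rest is a formal consequence of normality and finite-dimensional Hopf duality.
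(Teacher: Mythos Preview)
Your strategy is essentially the paper's, recast in the language of short exact sequences of Hopf algebras: both arguments hinge on the injection $L/K^{+}L\hookrightarrow H/K^{+}H$, dualize, and then identify the cokernel. The paper finishes with an explicit dimension count (using \cite[Thm.~3.3.1]{MO93}) rather than invoking duality of exact sequences, but that is only a cosmetic difference.

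The one point that needs tightening is your justification of injectivity. Nichols--Zoeller gives that $H$ is free over $L$ and $L$ is free over $K$, but freeness alone does \emph{not} immediately yield $K^{+}H\cap L=K^{+}L$: you still need that the inclusion $L\hookrightarrow H$ splits as a map of left $L$-modules (equivalently, that $1$ can be taken as part of a free $L$-basis of $H$). This is exactly where the paper invokes that $L$ is Frobenius, hence self-injective, so the injection $L\hookrightarrow H$ into a free $L$-module splits and one may write $H=L\oplus M$ as left $L$-modules; then $K^{+}H=K^{+}L\oplus K^{+}M$ and the intersection is clear. Once you add this step your argument goes through; the remaining checks (normality descending to the quotient, exactness in the middle, and duality of the sequence) are routine, and your anticipated ``faithful-flatness obstacle'' is precisely this Frobenius splitting in disguise.
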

\begin{proof}
By \cite[Thm. 2.1.3]{MO93}, $L$ is Frobenius. Hence the injective left $L$-module map $L\hookrightarrow H$ splits since $L$ is self-injective. Therefore we can write $H=L\bigoplus M$ as a direct sum of two left $L$-modules. Because $K\subseteq L$,  we have $L\bigcap K^+H=L\bigcap K^+\left(L\bigoplus M\right)=L\bigcap \left(K^+L\bigoplus K^+M\right)=K^+L$. Then the inclusion map $L\hookrightarrow H$ induces an injective Hopf algebra map $L/K^+L\hookrightarrow H/K^+H$, since $K^+L$ and $K^+H$ are Hopf ideals of $L$ and $H$ by \cite[Lemma 3.4.2]{MO93}.

It is clear that the composition map $L/K^+L\hookrightarrow H/K^+L\twoheadrightarrow H/L^+H$ factors through $\field$ by the counit. Thus the dualized map restricted on $(H/L^+H)^{*+}=(H/L^+H)^*\bigcap \Ker\ u^*\to (L/K^+L)^*$ is the zero map, where $u$ is the unit map in $H$. 
 
Therefore the natural surjective map $(H/K^+H)^*\twoheadrightarrow (L/K^+L)^*$, which is induced by the inclusion $L/K^+L\hookrightarrow H/K^+H$, factors through $\left(H/K^+H\right)^*\Big/\left(H/L^+H\right)^{*+}\left(H/K^+H\right)^*$. In order to show that it is an isomorphism, it is enough to prove that both sides have the same dimension. By \cite[Theorem 3.3.1]{MO93}, we have
\begin{align*}
\dim \left(H/K^+H\right)^*\Big/\left(H/L^+H\right)^{*+}\left(H/K^+H\right)^*&=\dim \left(H/K^+H\right)^*\Big/\dim(H/L^+H)^*\\
&=(\dim H/\dim K)\Big/(\dim H/\dim L)\\
&=\dim L/\dim K\\
&=\dim(L/K^+L)^*.
\end{align*}
\end{proof}

Let $H$ be any Hopf algebra over $\field$, and $\field\subseteq E$ be a field extension. In the proof of \cite[Cor. 2.2.2]{MO93}, we know that $H\otimes E$ is also a Hopf $E$-algebra, via
\begin{align*}
\Delta(h\otimes \alpha)&:=\Delta (h)\otimes \alpha\in H\otimes H\otimes E\cong (H\otimes E)\otimes_E(H\otimes E)\\
\e(h\otimes \alpha)&:=\e(h)\alpha\in E\\
S(h\otimes \alpha)&:=S(h)\otimes \alpha
\end{align*}
for all $h\in H,\alpha\in E$. Now consider any automorphism $\sigma$ of $\field$. By taking $E=\field$ and $\sigma$ to be the embedding in the discussion above, $H\otimes_\sigma\field$ is also a Hopf $\field$-algebra, which we will denote by $H_\sigma$. Note that in $H_\sigma$, we have $h\alpha\otimes 1=h\otimes \sigma(\alpha)$ for all $h\in H,\alpha\in \field$. Let $id_\sigma$ be the map $id\otimes 1$ from $H$ to $H_{\sigma}$. The following hold for all $h, l\in H$ and $\alpha\in \field$
\begin{gather*}
id_\sigma(hl)=id_\sigma(h)id_\sigma(l),\ \Delta id_\sigma(h)=(id_\sigma\otimes id_\sigma)\Delta h,\ S(id_\sigma(h))=id_\sigma(S(h))\\
\e id_\sigma(h)=\sigma\left(\e(h)\right),\ id_\sigma(h\alpha)=id_\sigma(h)\sigma (\alpha).
\end{gather*}
Generally, let $A$ be another Hopf algebra over $\field$, and $\phi$ be a map from $A$ to $H$. We say that $\phi: A\mapsto H$ is a \textbf{$\sigma$-linear Hopf algebra map} if the composition $id_\sigma\circ\phi: A\mapsto H_\sigma$ is a $\field$-linear Hopf algebra map. Suppose $H,A$ are both finite-dimensional. Note that $(H_\sigma)^*\cong (H^*)_\sigma$ since $\Hom_E(H\otimes E,E)\cong \Hom_\field(H,\field)\otimes E$ for any field extension $\field\subseteq E$. Let $f$ be a $\sigma$-linear Hopf algebra map from $A$ to $H$. It is clear that the dual of $f$ is a $\sigma^{-1}$-linear Hopf algebra map from $H^*$ to $A^*$. Also quotients of $\sigma$-linear Hopf algebra maps are still $\sigma$-linear.

\begin{proposition}\label{cocommfiltration}
Let $H$ be a finite-dimensional cocommutative connected Hopf algebra. Then $H$ has an increasing sequence of normal Hopf subalgebras: $\field=N_0\subset N_1\subset \cdots\subset N_n=H$ satisfying the following properties:
\begin{itemize} 
\item[(1)] Denote by $J$ the Jacobson radical of $H^*$. Then the length $n$ is the minimal integer such that $x^{p^n}=0$ for all $x\in J$.
\item[(2)] $N_1$ is the Hopf subalgebra of $H$ generated by all primitive elements.
\item[(3)] There are $\sigma$-linear injective Hopf algebra maps: 
\[\xymatrix{
N_{m}/N_{m-1}^+N_m\ar@{^(->}[r]& N_{m-1}/N_{m-2}^+N_{m-1}
}\]
for all $2\le m\le n$, where $\sigma$ is the Frobenius map of $\field$.
\item[(4)] $0=\dim \Prim\left(H/N_n^{+}H\right)\le \dim \Prim\left(H/N_{n-1}^{+}H\right)\le\cdots\le \dim \Prim\left(H/N_0^{+}H\right)=\dim \Prim(H)$.
\end{itemize} 
\end{proposition}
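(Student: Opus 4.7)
The plan is to construct the filtration by dualization, working with the commutative local Hopf algebra $A := H^*$ (Proposition \ref{BPH}(1)). The key engine is the Frobenius endomorphism $F \colon A \to A$, $a \mapsto a^p$. Commutativity of $A$ makes $F$ a ring map, the identity $\Delta(a^p) = \Delta(a)^p = (F\otimes F)\Delta(a)$ in $A\otimes A$ makes it a coalgebra map, and $F(\alpha a) = \alpha^p F(a)$ records its $\sigma$-twist. Because $\field$ is perfect, each image $A_m := F^m(A) = \{a^{p^m} : a \in A\}$ is a $\field$-subspace and hence a Hopf subalgebra, producing a chain $A = A_0 \supseteq A_1 \supseteq \cdots \supseteq A_n = \field$. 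Writing $a = \e(a)+x$ with $x \in J := \Ker\e_A$ shows $A_n = \field$ iff $x^{p^n} = 0$ for all $x \in J$, which pins down the minimal $n$ of part (1). Every Hopf subalgebra of the commutative $A$ is normal, so dualizing the short exact sequence $A_m \hookrightarrow A \twoheadrightarrow A/A_m^+A$ yields a short exact sequence $N_m := (A/A_m^+A)^* \hookrightarrow H \twoheadrightarrow A_m^*$ in which $N_m$ is a normal Hopf subalgebra of $H$ and $H/N_m^+H \cong A_m^*$; the endpoints give $N_0 = \field$ and $N_n = H$.

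For part (2), $B := A/A_1^+A$ is commutative local with $y^p = 0$ for every $y \in J_B$ (a lift $\tilde y \in J$ satisfies $\tilde y^p \in A_1^+ \subseteq A_1^+A$), so \cite[Thm. 14.4]{GTM66} (used as in the proof of Theorem \ref{generalgrc}) forces $B \cong \field[y_1,\ldots,y_d]/(y_1^p,\ldots,y_d^p)$ as an algebra with $d = \dim J_B/J_B^2$. Hence $\dim N_1 = p^d$, and Proposition \ref{BPH}(4) and (6) give $\dim u(\Prim(N_1)) = p^{\dim\Prim(N_1)} = p^d = \dim N_1$, so $N_1 = u(\Prim(N_1))$. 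Now any primitive $x \in H = A^*$ acts on $A$ as a derivation at the counit, so $x(yz) = \e(y)x(z) + x(y)\e(z) = 0$ for $y,z \in J$; writing $y^p = y \cdot y^{p-1}$ gives $x(y^p) = 0$ and then $x(y^p a) = x(y^p)\e(a) + \e(y^p)x(a) = 0$ for $y \in J$, $a \in A$, so $x$ annihilates $A_1^+A$ and lies in $N_1$. Combined with the trivial $\Prim(N_1) \subseteq \Prim(H)$, this yields $\Prim(N_1) = \Prim(H)$ and $N_1 = u(\Prim(H))$.

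For parts (3) and (4), I combine Lemma \ref{commNAI} with Frobenius descent. Applying the lemma with $(K,L) = (N_{m-1},N_m)$ and then with $(K,L) = (N_{m-2},N_{m-1})$ inside $H$ gives $(N_m/N_{m-1}^+N_m)^* \cong A_{m-1}/A_m^+A_{m-1}$ and $(N_{m-1}/N_{m-2}^+N_{m-1})^* \cong A_{m-2}/A_{m-1}^+A_{m-2}$. The Frobenius restricts to a surjection $F\colon A_{m-2} \twoheadrightarrow A_{m-1}$ (a $\sigma$-twisted Hopf algebra map) sending $A_{m-1}^+A_{m-2}$ into $A_m^+A_{m-1}$ (because $F(A_{m-1}^+) \subseteq A_m^+$ and $F(A_{m-2}) = A_{m-1}$), so it descends to a surjective Hopf algebra map $A_{m-2}/A_{m-1}^+A_{m-2} \twoheadrightarrow A_{m-1}/A_m^+A_{m-1}$ with the same twist; dualizing (and using the paper's convention that the dual of a $\sigma^{-1}$-linear map is $\sigma$-linear) yields the desired $\sigma$-linear injective Hopf algebra map $N_m/N_{m-1}^+N_m \hookrightarrow N_{m-1}/N_{m-2}^+N_{m-1}$. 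For part (4), Proposition \ref{BPH}(4) gives $\dim\Prim(H/N_m^+H) = \dim\Prim(A_m^*) = \dim J_{A_m}/J_{A_m}^2$, and the surjection $F\colon A_{m-1} \twoheadrightarrow A_m$ induces a surjection $J_{A_{m-1}}/J_{A_{m-1}}^2 \twoheadrightarrow J_{A_m}/J_{A_m}^2$ (modulo the twist) that yields the required monotonicity. The chief technical subtlety is the careful accounting of the duality between sub-Hopf-algebras of $A$ and normal sub-Hopf-algebras of $H$ together with the Frobenius twist under duality, but both reduce to standard short-exact-sequence manipulations once the framework is in place.
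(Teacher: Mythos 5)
Your proposal is correct and follows essentially the same route as the paper's proof: dualize to the commutative local $A=H^*$, filter by the Frobenius-power Hopf subalgebras $A_m=\{a^{p^m}\,|\,a\in A\}$, set $N_m=(A/A_m^+A)^*$, identify the subquotients via Lemma \ref{commNAI}, and dualize the $p$-th power maps to obtain (3), with $\dim J/J^2$ computations giving (2) and (4). The differences are only cosmetic (you bypass the explicit truncated-polynomial presentation except for part (2), prove (2) by the derivation property of primitives plus a dimension count, and get the monotonicity in (4) from the Frobenius surjection on $J/J^2$ rather than counting nonvanishing generators), so no further comparison is needed.
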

\begin{proof}
$(1)$ By duality, $H^*$ is a finite-dimensional commutative local Hopf algebra. Therefore by \cite[Thm. 14.4]{GTM66} we can write:
\begin{align*}
H^*=\field\left[x_1,x_2,\cdots,x_d\right]\Big /\left(x_1^{p^{n_1}},x_2^{p^{n_2}},\cdots,x_d^{p^{n_d}}\right)
\end{align*} 
for some $d\ge 0$, in which we can define a decreasing sequence of normal Hopf ideals \cite[Def. 3.4.5]{MO93}
\begin{align*}
\left(J_m=(x_1^{p^{m}},x_2^{p^{m}},\cdots,x_d^{p^{m}})\right)_{m\ge 0}.
\end{align*}
By \cite[P. 36]{MO93}, in the dual vector space $H$ we have an increasing sequence of normal Hopf subalgebras: $\field=N_0\subset N_1\subset \cdots\subset N_m\subseteq\cdots\subseteq H$, where $N_m=\left(H^*/J_m\right)^*$ for all $m\ge 0$.  For the length of this sequence, notice that $N_m=H\Leftrightarrow J_m=0\Leftrightarrow x_i^{p^m}=0$ for all $1\le i\le d\Leftrightarrow x^{p^m}=0$ for all $x\in J_0=J$. 

$(2)$ Denote by $L$ the Hopf subalgebra of $H$ generated by $\Prim(H)$. By \cite[Prop. 5.2.9]{MO93}, $\field\bigoplus \Prim(H)=\{h\in H|\langle J^2,h\rangle=0\}$. Hence under the natural identification, $\Prim(H)\subset(H^*/J^2)^*\subseteq (H^*/J_1)^*=N_1$. Because $L$ is generated by $\Prim(H)$ as an algebra, we have $L\subseteq N_1$. Moreover we know $\dim L=p^{\dim \Prim(H)}=p^{\dim J/J^2}=p^d$ by Proposition \ref{BPH}(4). On the other side, $\dim N_1=\dim H^*/J_1=p^d$, which implies that $L=N_1$. 

$(3)$ Define a decreasing sequence of normal Hopf subalgebras of $H^*$ by
\begin{align*}
A_{m}=\{h^{p^m}|h\in H^*\}=\field\left[x_1^{p^{m}},x_2^{p^{m}},\cdots,x_d^{p^{m}}\right].
\end{align*}
Notice that $A_m^+H^*=J_m$ for all $m\ge 0$. Moreover, by Lemma \ref{commNAI}, we have
\begin{align}\label{NIF}
\left(A_m/A_{m+1}^+A_m\right)^*&\cong \left(H^*/A_{m+1}^+H^*\right)^*\Big/\left(H^*/A_m^+H\right)^{*+}\left(H^*/A_{m+1}^+H^*\right)^*\\
&=N_{m+1}\Big/N_m{^+}N_{m+1}.\notag
\end{align} 
Let $\sigma$ be the Frobenius map of $\field$ (i.e., the $p$-th power map). For any $2\le m\le n$, we can take $(A_{m-2})_{\sigma^{-1}}=A_{m-2}\otimes_{\sigma^{-1}} \field$ such that $ak\otimes 1=a\otimes \sigma^{-1}(k)$ for any $a\in A_{m-2}$ and $k\in \field$. Hence it is easy to see that there exists a series of $\sigma^{-1}$-linear surjective $p$-th power Hopf algebra maps $\phi_{m-2}: A_{m-2} \twoheadrightarrow A_{m-1}$ such that $\phi_{m-2}(x)=x^p$ for all $x\in A_{m-2}$. Therefore $\phi_{m-2}$ induces a series of $\sigma^{-1}$-linear surjective maps on their quotients $A_{m-2}/A_{m-1}^+A_{m-2}\twoheadrightarrow A_{m-1}/A_{m}^+A_{m-1}$. By daulizing all the maps and the above natural isomorphism \eqref{NIF}, we have a series of $\sigma$-linear injective Hopf algebra maps:
\[\xymatrix{
N_{m}/N_{m-1}^+N_m\ar@{^(->}[r]& N_{m-1}/N_{m-2}^+N_{m-1}
}\]
for all $2\le m\le n$.

$(4)$ In Lemma \ref{commNAI}, let $K=\field$ and $L=A_m$. Then we have the special isomorphism:
\begin{align*}
A_m^*\cong H\Big/N_m^+H.
\end{align*} 
Therefore, by Proposition \ref{BPH}(4), 
\begin{align*}
\dim \Prim(H/N_m^{+}H)=\dim J(A_m)/J(A_m)^2=\#\left\{\{x_1^{p^{m}},x_2^{p^{m}},\cdots,x_d^{p^{m}}\}\setminus \{0\}\right\},
\end{align*}
which is the number of generators among $\{x_1,x_2,\cdots,x_d\}$, whose $p^{m}$-th power does not vanish. Thus the inequalities follow.
\end{proof}

\begin{corollary}\label{FCLH}
Let $H$ be a finite-dimensional connected Hopf algebra with $\dim\Prim(H)=1$. Then $H$ has an increasing sequence of normal Hopf subalgebras:
\begin{align*}
\field=N_0\subset N_1\subset N_2\subset \cdots \subset N_n=H,
\end{align*} 
where $N_1$ is generated by $\Prim(H)$ and each $N_i$ has $p$-index one in $N_{i+1}$.
\end{corollary}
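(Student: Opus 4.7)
The plan is to reduce the corollary to Proposition \ref{cocommfiltration} by first proving that the hypothesis $\dim\Prim(H)=1$ forces $H$ to be cocommutative. Once cocommutativity is established, the filtration $N_0\subset N_1\subset\cdots\subset N_n=H$ is handed to us by the proposition, and the $p$-index claim will follow from the very explicit description of $H^*$ extracted in the proof of Proposition \ref{cocommfiltration}(1).

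First I would verify cocommutativity via duality. By Proposition \ref{BPH}(1), $H^*$ is a finite-dimensional local Hopf algebra, and by Proposition \ref{BPH}(4) its Jacobson radical $J$ satisfies $\dim J/J^2=\dim\Prim(H)=1$. Picking any $x\in J$ whose image generates $J/J^2$, Nakayama's lemma yields $J=H^*x$ as a left ideal. Writing $H^*=\field\oplus J$ and iterating
\[
J=H^*x=\field x+Jx=\field x+\field x^2+Jx^2=\cdots,
\]
we see (using nilpotence of $J$) that $H^*$ is spanned by $1,x,x^2,\ldots$. Hence $H^*$ is generated as an algebra by a single element over $\field$, so $H^*$ is commutative and $H$ is cocommutative.

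Now I would invoke Proposition \ref{cocommfiltration} to produce the sequence $\field=N_0\subset N_1\subset\cdots\subset N_n=H$. Part (2) of that proposition says $N_1$ is generated by $\Prim(H)$, which is exactly one of the claims. For the $p$-index assertion, I would revisit the proof of Proposition \ref{cocommfiltration}(1): it presents $H^*=\field[x_1,\ldots,x_d]/(x_1^{p^{n_1}},\ldots,x_d^{p^{n_d}})$ with $d=\dim J/J^2=\dim\Prim(H)=1$, so in fact $H^*\cong\field[x]/(x^{p^n})$ with $n=n_1$. Then $J_m=(x^{p^m})$ and $N_m=(H^*/J_m)^*\cong(\field[x]/(x^{p^m}))^*$ has dimension $p^m$. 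In particular $\dim N_{m+1}=p\cdot\dim N_m$, which is exactly the statement that the $p$-index of $N_m$ in $N_{m+1}$ equals one.

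The main obstacle is the cocommutativity step, since all the hard structural work is packed into Proposition \ref{cocommfiltration}. That step is nevertheless clean once one combines Nakayama with the observation that a finite-dimensional local algebra whose maximal ideal is generated by a single element is automatically commutative, being spanned over $\field$ by powers of that generator. Everything after that is bookkeeping with the explicit form of $H^*$.
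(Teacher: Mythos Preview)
Your proposal is correct and follows essentially the same route as the paper's proof: both dualize, use $\dim J/J^2=\dim\Prim(H)=1$ to conclude that $H^*$ is generated by a single element (hence commutative, so $H$ is cocommutative and $H^*\cong\field[x]/(x^{p^n})$), and then read off the filtration and the dimensions $\dim N_m=p^m$ from Proposition \ref{cocommfiltration}. The only difference is that you spell out the Nakayama argument where the paper simply asserts the isomorphism $H^*\cong\field[x]/(x^{p^n})$ as ``clear''.
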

\begin{proof}
Denote by $H^*$ the dual Hopf algebra of $H$. By duality, $H^*$ is local. Set $J=J(H^*)$, the Jacobson radical of $H^*$. Since $\dim \Prim(H)=1$, by Proposition \ref{BPH}(4), $\dim J/J^2=1$. Suppose that $\dim H=p^n$ by Proposition \ref{BPH}(7). It is clear that $H^*\cong \field\left[x\right]/(x^{p^n})$ as algebras and $J=(x)$. Hence $H$ is cocommutative and it has an increasing sequence of normal Hopf subalgebras $\field=N_0\subset N_1\subset \cdots\subset N_n=H$ such that $N_1$ is generated by $\Prim(H)$ and $\dim N_m=p^m$ for all $0\le m\le n$ by Proposition \ref{cocommfiltration}.
\end{proof}

\begin{theorem}\label{NPLA}
Let $H$ be finite-dimensional cocommutative connected Hopf algebra. Denote by $K$ the Hopf subalgebra generated by $\Prim(H)$. Then the following are equivalent:
\begin{itemize}
\item[(1)] $H$ is local.
\item[(2)] $K$ is local.
\item[(3)] All the primitive elements of $H$ are nilpotent.
\end{itemize}
\end{theorem}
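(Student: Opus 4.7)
The plan is to run the cycle (1) $\Rightarrow$ (2) $\Rightarrow$ (3) $\Rightarrow$ (2) $\Rightarrow$ (1), where the first two implications are short and the last two carry the content. The implication (1) $\Rightarrow$ (2) is immediate from Proposition \ref{BPH}(2). For (2) $\Rightarrow$ (3), observe that in a finite-dimensional local algebra $K$, the augmentation ideal $K^+$ coincides with the Jacobson radical (because $K/K^+\cong\field$) and is therefore nilpotent; since every primitive element has vanishing counit and, by the definition of $K$, lies in $K$, it must sit inside $K^+$ and so be nilpotent.

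For (3) $\Rightarrow$ (2), set $\mathfrak{g}=\Prim(H)=\Prim(K)$ so that $K\cong u(\mathfrak{g})$ by Proposition \ref{BPH}(5); the goal is to show that $p$-nilpotency of every $x\in\mathfrak{g}$ forces $u(\mathfrak{g})$ to be local, which I would prove by induction on $\dim\mathfrak{g}$. The identity $\ad(x)^{p^n}=\ad(x^{[p]^n})$ shows each $\ad(x)$ is nilpotent, whence Engel's theorem makes $\mathfrak{g}$ a nilpotent Lie algebra with nonzero center. Picking $0\neq z\in Z(\mathfrak{g})$, the subspace $\mathfrak{h}=\field z+\field z^{[p]}+\field z^{[p]^2}+\cdots$ is finite-dimensional by $p$-nilpotency of $z$, entirely central since $\ad(z^{[p]^i})=\ad(z)^{p^i}=0$, and therefore an abelian restricted ideal of $\mathfrak{g}$. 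Then $u(\mathfrak{h})\cong\field[z]/(z^{p^k})$ is local, $u(\mathfrak{g}/\mathfrak{h})$ is local by the inductive hypothesis, and the extension argument below upgrades this to $u(\mathfrak{g})$ local.

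For (2) $\Rightarrow$ (1), I apply Proposition \ref{cocommfiltration} to produce a chain $\field=N_0\subset N_1\subset\cdots\subset N_n=H$ of normal Hopf subalgebras with $N_1=K$, and prove by induction on $m$ that each $N_m$ is local. Iterating the $\sigma$-linear injections of Proposition \ref{cocommfiltration}(3) yields a $\sigma^{m-1}$-linear Hopf algebra injection
\[
N_m/N_{m-1}^+N_m\hookrightarrow N_{m-1}/N_{m-2}^+N_{m-1}\hookrightarrow\cdots\hookrightarrow N_1=K,
\]
so, since $\field$ is algebraically closed and therefore the Frobenius is bijective, Proposition \ref{BPH}(2) and locality of $K$ give that $N_m/N_{m-1}^+N_m$ is local. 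Normality of $N_{m-1}$ in $N_m$ gives $N_{m-1}^+N_m=N_mN_{m-1}^+$, whence $(N_{m-1}^+N_m)^r=(N_{m-1}^+)^rN_m=0$ for $r$ large; thus $N_{m-1}^+N_m$ is a nilpotent two-sided ideal of $N_m$ and lies in $J(N_m)$. Therefore $N_m/J(N_m)$ is both a quotient of the local algebra $N_m/N_{m-1}^+N_m$ and semisimple, forcing $N_m/J(N_m)\cong\field$ and $N_m$ local.

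The main obstacle is the restricted Lie algebra step (3) $\Rightarrow$ (2): one must actually locate a nontrivial central restricted ideal in $\mathfrak{g}$ that survives the induction and verify the extension lemma (normal subalgebra local plus quotient local implies the total is local), which is exactly the nilpotency argument for $(N_{m-1}^+N_m)^r$ appearing in (2) $\Rightarrow$ (1). Once these two ingredients are in place, the rest of the proof is essentially bookkeeping with Proposition \ref{cocommfiltration}(3) and the basic properties collected in Proposition \ref{BPH}.
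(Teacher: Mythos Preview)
Your proposal is correct, and for the implications $(1)\Rightarrow(2)$, $(2)\Rightarrow(3)$, and $(2)\Rightarrow(1)$ it matches the paper's argument almost verbatim; your only cosmetic deviation in $(2)\Rightarrow(1)$ is to iterate the $\sigma$-linear injections of Proposition~\ref{cocommfiltration}(3) all the way down to $K$ rather than using a single step together with the inductive hypothesis that $N_m/N_{m-1}^+N_m$ is already local, but the extension argument (nilpotency of $N_{m-1}^+N_m$ via normality) is identical.

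The genuine difference is in $(3)\Rightarrow(2)$. The paper, after invoking Engel's theorem, finishes in one stroke: every irreducible $u(\mathfrak g)$-module is an irreducible restricted $\mathfrak g$-representation, and since $\mathfrak g$ is nilpotent with every element acting nilpotently, Engel (applied now to the representation) forces it to be the trivial one-dimensional module; hence $K^+$ kills every simple and $K$ is local. Your route instead locates a nonzero central restricted ideal $\mathfrak h$, checks $u(\mathfrak h)$ is local, and then runs the same ``local-by-local'' extension lemma that powers $(2)\Rightarrow(1)$ to climb from $u(\mathfrak h)$ and $u(\mathfrak g/\mathfrak h)$ to $u(\mathfrak g)$. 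Both are valid; the paper's argument is shorter and needs no induction, while yours has the structural appeal of reducing everything to a single extension principle. One small point: your claim $u(\mathfrak h)\cong\field[z]/(z^{p^k})$ is in fact correct (PBW forces $\dim u(\mathfrak h)=p^{\dim\mathfrak h}$ and $u(\mathfrak h)$ is generated by $z$ alone since $z^{[p]^i}=z^{p^i}$), but all you actually need is that $u(\mathfrak h)$ is commutative with nilpotent augmentation ideal, which is immediate.
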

\begin{proof}
$(1)\Rightarrow (2)$ is from Proposition \ref{BPH}(2) and $(2)\Rightarrow (3)$ is clear since $K$ contains $\Prim(H)$ and its augmentation ideal is nilpotent. 

In order to show that $(3)\Rightarrow (2)$, denote $\mathfrak g=\Prim(H)$, which is a restricted Lie algebra. Then $(3)$ is equivalent to the statement that $\mathfrak g^{p^n}=0$ for sufficient larger $n$. Therefore $(\ad x)^{p^n}=\ad(x^{p^n})=0$ for all $x\in \mathfrak g$. By Engel's Theorem \cite[I \S 3.2]{GTM9}, $\mathfrak g$ is nilpotent. Any representation of $K\cong u(\mathfrak g)$ is a restricted representation of $\mathfrak g$. Therefore any irreducible representation of $K$ is one-dimensional with trivial action of the augmentation ideal of $K$. Hence the augmentation ideal of $K$ is nilpotent and $K$ is local. 

Finally, we need to show $(2)\Rightarrow (1)$.  Suppose $\field=N_0\subset N_1\subset \cdots N_n=H$ is the sequence of normal Hopf subalgebras stated in Proposition \ref{cocommfiltration} for $H$. By Proposition \ref{cocommfiltration}(2), we know $N_1=K$ is local. We will show inductively that each $N_m$ is local. Assume $N_m$ to be local and denote $\sigma$ as the Frobenius map of $\field$. We have the following injective Hopf algebra map according to Proposition \ref{cocommfiltration}(3) and the definition of $\sigma$-linear Hopf algebra maps:
\[\xymatrix{
N_{m+1}/N_{m}^+N_{m+1}\ar@{^(->}[r]& \left(N_{m}/N_{m-1}^+N_{m}\right)_\sigma.
}\]
Note that any finite-dimensional Hopf algebra $A$ is local if and only if its augmented ideal $A^+$ is nilpotent. Since $(A\otimes_\sigma \field)^+=(A^+)\otimes_\sigma\field$, we see that $A$ is local if and only if $A_\sigma$ is local. Hence $\left(N_{m}/N_{m-1}^+N_{m}\right)_\sigma$ is local. Moreover, by Proposition \ref{BPH}(2), $N_{m+1}/N_{m}^+N_{m+1}$ is local. Therefore there exist integers $l,d$ such that $(N_{m+1}^+)^d\subseteq N_m^+N_{m+1}$ and $(N_m^+)^l=0$. Hence $(N_{m+1}^+)^{ld}\subseteq (N_m^+)^dN_{m+1}=0$. Here we have used $N_m^+N_{m+1} = N_{m+1}N_m^+$, which follows from \cite[Cor. 3.4.4]{MO93} and the fact that $N_m$ is normal. This completes the proof.
\end{proof}

\begin{remark}\label{GSL}
Let $G$ be a connected affine algebraic group scheme over $\field$, and $G_1$ be the first Frobenius kernel of G.  By \cite[Prop. 4.3.1 Exp. XVII]{SGA3}, we know that $G$ is unipotent if and only if $\Lie\left(G\right)$ is unipotent, i.e., for any $x\in \Lie(G_1)$, there exists integer $n>0$, such that $x^{p^n}=0$. Moreover, $\Lie\left(G\right)=\Lie\left(G_1\right)$. Hence $G$ is unipotent if and only if $G_1$ is unipotent. Denote the coordinate ring $A=\field[G]$. Then $\field[G_1] = A/A^{+(p)}A$, where $A^{(p)}=\{a^p\ |\ a\in A\}$. We can state the above assertion in another way: $A$ is connected if and only if $A/A^{+(p)}A$ is connected. If $A$ is finite-dimensional, as shown in Proposition \ref{cocommfiltration}(2), $\left(A/A^{+(p)}A\right)^*$ is the Hopf subalgebra of $A^*$ generated by its primitive elements. This provides an alternative proof for Theorem \ref{NPLA} and shows that the locality criterion in Theorem \ref{NPLA} for finite-dimensional cocommutative connected Hopf algebras parallel the criteria for unipotency of finite connected group schemes over $\field$.
\end{remark}

\section{Hochschild cohomology of restricted universal enveloping algebras}
Suppose $H$ is a Hopf algebra. Denote by $\field$ the trivial $H$-bicomodule. The Hochschild cohomology $\HL^\bullet(\field,H)$ of $H$ with coefficients in $\field$ can be computed as the homology of the differential graded algebra $\Omega H$ defined as follows \cite[Lemma 1.1]{cstefan1998hochschild}: 
\begin{itemize}
\item As a graded algebra, $\Omega H$ is the tensor algebra $T(H)$,
\item The differential in $\Omega H$ is given by $d^0=0$ and for $n\ge 1$
\begin{align*}
d^n=1\otimes I_n+\sum_{i=0}^{n-1} (-1)^{i+1} I_i\otimes\Delta\otimes I_{n-i-1}+(-1)^{n+1}I_n\otimes 1.
\end{align*}
\end{itemize}
This DG algebra is usually called the \textbf{cobar construction} of $H$. See \cite[\S 19]{GTM205} for the basic properties of cobar constructions. Throughout, we will use $\HL^\bullet(\field,H)$ to denote the homology of the DG algebra $(\Omega H,d)$.

\begin{lemma}\label{DimExtCo}
Let $H$ be a finite-dimensional Hopf algebra. Thus
\begin{eqnarray*}
\HL^n\left(\field,H\right)\cong \HL^n\left(H^*,\field\right)\cong \Ext^n_{H^*}\left(\field,\field\right),
\end{eqnarray*}
for all $n\ge 0$.
\end{lemma}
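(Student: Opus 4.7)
The plan is to use the finite-dimensional duality between $H$ and $H^*$ to identify the cobar complex of $H$ with the standard Hochschild cochain complex of the augmented algebra $H^*$ that computes $\Ext^\bullet_{H^*}(\field,\field)$; the intermediate Hochschild object in the statement then falls out essentially for free.

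First I would unpack the left hand side. Since $\field$ is the trivial $H$-bicomodule, the coaction maps of Section~2 reduce to $\rho_l=1_H\otimes\cdot$ and $\rho_r=\cdot\otimes 1_H$, so $\mathbb{C}^n(\field,H)=\Hom_\field(\field,H^{\otimes n})=H^{\otimes n}$, and the generic Hochschild differential collapses to precisely the cobar differential $d^n$ displayed above the lemma. Thus $\HL^\bullet(\field,H)$ is by construction the cohomology of the cobar complex $(\Omega H,d)$.

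Next I would dualize. Finite dimensionality of $H$ gives a canonical isomorphism of cochain groups $H^{\otimes n}\cong \Hom_\field\bigl((H^*)^{\otimes n},\field\bigr)$ via the natural pairing. Under this identification the unit $\field\to H$, $1\mapsto 1_H$, goes to the counit $\e_{H^*}$, and the comultiplication $\Delta_H$ goes to the multiplication $\mu_{H^*}$. Pushing these through the formula for $d^n$, the boundary term $1\otimes I_n$ becomes $\e_{H^*}(a_1)\,f(a_2,\dotsc,a_{n+1})$, each interior term $(-1)^{i+1}I_i\otimes\Delta\otimes I_{n-i-1}$ becomes the Hochschild term that multiplies the adjacent factors $a_{i+1}a_{i+2}$ with sign $(-1)^{i+1}$, and the trailing term $(-1)^{n+1}I_n\otimes 1$ becomes $(-1)^{n+1}\e_{H^*}(a_{n+1})\,f(a_1,\dotsc,a_n)$. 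This exhibits the cobar complex of $H$ as canonically isomorphic to the standard Hochschild cochain complex of the augmented algebra $H^*$ with coefficients in the trivial bimodule $\field$, whose cohomology is $\HL^n(H^*,\field)$ in the algebra sense, yielding the first asserted isomorphism.

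Finally, the identification $\HL^n(H^*,\field)\cong\Ext^n_{H^*}(\field,\field)$ is the standard fact that for any augmented algebra $A$ the Hochschild cochain complex with trivial coefficients agrees with $\Hom_A(B_\bullet(A),\field)$, where $B_\bullet(A)$ is the bar resolution of $\field$ as a left $A$-module; its cohomology therefore computes $\Ext^\bullet_A(\field,\field)$. The only genuine bookkeeping in the argument is to match the sign $(-1)^{i+1}$ in the cobar formula with the sign $(-1)^{j}$ (with $j=i+1$) in the Hochschild coboundary, and to keep the unit $1_H\in H$ clearly distinct from the augmentation $\e_{H^*}$ under duality; this is the step I would expect to be the most delicate, but it is purely a matter of careful translation rather than a conceptual obstruction.
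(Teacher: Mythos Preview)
Your proof is correct and follows the same overall architecture as the paper's: dualize the cobar complex of $H$ to obtain the Hochschild cochain complex of $H^*$ with trivial coefficients, then identify the latter with $\Ext^\bullet_{H^*}(\field,\field)$. For the first isomorphism the paper simply cites \cite[Prop.~1.4]{cstefan1998hochschild}, whereas you unpack that duality term by term; the content is the same.

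The one genuine difference is in the second isomorphism. The paper invokes \cite[Thm.~1.5]{cstefan1998hochschild}, a Hopf-algebra-specific result that compares $\HL^\bullet(A,M)$ with $\Ext^\bullet_A(\field,M^{\ad})$ via the antipode, and then observes $\field^{\ad}\cong\field$. You instead use only the augmented-algebra structure of $H^*$ and the bar resolution, which is more elementary and shows that the Hopf structure (in particular the antipode) is not actually needed for this step. One small caveat: the unnormalized Hochschild cochain complex and the unnormalized $\Hom_A(B_\bullet,\field)$ differ by the final $(-1)^{n+1}\e(a_{n+1})$ face, so they do not literally ``agree'' as you say; but they are canonically quasi-isomorphic (e.g.\ both normalize to the same complex on $(A^+)^{\otimes n}$), so the conclusion stands.
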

\begin{proof}
We still denote by $\field$ the trivial $H$-bimodule. Then the first isomorphism comes from \cite[Prop. 1.4]{cstefan1998hochschild}. Let $M$ be a $H$-bimodule with the trivial right structure. We define the right structure of $M^{\ad}$ by $m.h=S(h)m$ using the antipode $S$ of $H$ for any $m\in M,h\in H$. Then it is easy to see $\field^{\ad}\cong \field$ as trivial right $H$-modules. Hence the second isomorphism is derived from \cite[Thm. 1.5]{cstefan1998hochschild}.
\end{proof}

Let $\mathfrak g$ be a restricted Lie algebra. We denote by $u(\mathfrak g)$ the restricted universal enveloping algebra of $\mathfrak g$. Analogue to ordinary Lie algebras, restricted $\mathfrak g$-modules are in one-to-one correspondence with $u(\mathfrak g)$-modules, i.e., a vector space $M$ is a restricted $\mathfrak g$-module if there exists an algebra map $T: \mathfrak u(\mathfrak g)\to \End_\field(M)$.

\begin{proposition}\label{Liealgebrainclusion}
Let $\mathfrak g$ be a restricted Lie algebra with basis $\{x_1,x_2,\cdots,x_n\}$. Then the image of 
\begin{align*}
\left\{\omega(x_i),\ x_j\otimes x_k\ |\ 1\le i\le n,1\le j<k\le n\right\}
\end{align*}
is a basis in $\HL^2\left(\field,u(\mathfrak g)\right)$.
\end{proposition}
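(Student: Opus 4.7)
The plan is to compute $\HL^2(\field, u(\mathfrak g))$ directly from the cobar complex, combining an explicit cocycle verification with a dimension bound coming from the coradical filtration.

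I would first verify that the proposed elements are $2$-cocycles. For $x_j\otimes x_k$ with $x_j,x_k$ primitive, expanding
$d^2(x_j\otimes x_k)=1\otimes x_j\otimes x_k-\Delta(x_j)\otimes x_k+x_j\otimes\Delta(x_k)-x_j\otimes x_k\otimes 1$
and substituting $\Delta(x_i)=x_i\otimes 1+1\otimes x_i$ gives a telescoping cancellation. For $\omega(x_i)=\sum_{a=1}^{p-1}c_a x_i^a\otimes x_i^{p-a}$ with $c_a=(p-1)!/(a!(p-a)!)$, I would collect the coefficient of a generic triple tensor $x_i^u\otimes x_i^v\otimes x_i^w$ (with $u+v+w=p$ and $u,v,w\ge 1$) in $d^2(\omega(x_i))$: only the terms $a=u+v$ (from $-\Delta\otimes I$) and $a=u$ (from $I\otimes\Delta$) contribute, giving $-c_{u+v}\binom{u+v}{u}$ and $c_u\binom{p-u}{v}$ respectively; both expressions simplify to $(p-1)!/(u!\,v!\,w!)$ and cancel.

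Next, I would filter $\Omega u(\mathfrak g)$ by the coradical filtration. The associated graded is $\Omega A$, where $A:=\gr u(\mathfrak g)\cong\field[x_1,\ldots,x_n]/(x_1^p,\ldots,x_n^p)$ with each $x_i$ primitive (Corollary~\ref{connectedgr}, together with the observation that $\gr u(\mathfrak g)$ is primitively generated by the images of $\mathfrak g$). The filtration spectral sequence then yields $\dim\HL^2(\field,u(\mathfrak g))\le\dim\HL^2(\field,A)$. The polynomial grading on $A$ splits $\Omega A$ by internal degree. In internal degree $2$, all of $\bar A(1)^{\otimes 2}$ (dimension $n^2$) is cocyclic, while coboundaries are spanned by $d^1(x_ix_j)=-(x_i\otimes x_j+x_j\otimes x_i)$ together with $d^1(x_i^2)=-2x_i\otimes x_i$ (nonzero for $p>2$), leaving the $\binom{n}{2}$ classes $\{x_j\otimes x_k:j<k\}$. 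For the degree-$p$ part, I would invoke Lemma~\ref{DimExtCo} to pass to $\Ext^2_{A^*}(\field,\field)$, use the Hopf self-duality $A\cong A^*$ (each factor $\field[x]/(x^p)$ with primitive $x$ is Hopf self-dual via $x\leftrightarrow e_1$), and apply K\"unneth together with the standard computation $\Ext^*_{\field[x]/(x^p)}(\field,\field)\cong\field[\xi]\otimes\Lambda[\eta]$ ($|\xi|=2$, $|\eta|=1$) to identify the degree-$p$ part of $\HL^2$ as $n$-dimensional with basis $\{\omega(x_i)\}$. Summing, $\dim\HL^2(\field,A)=n+\binom{n}{2}$.

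Finally, the $n+\binom{n}{2}$ proposed cocycles in $\Omega u(\mathfrak g)$ have leading parts in $\Omega A$ coinciding with the basis of $\HL^2(\field,A)$ just described; by the standard fact that cocycles with linearly independent leading parts in the associated graded are themselves linearly independent in cohomology, they are linearly independent in $\HL^2(\field,u(\mathfrak g))$, and combined with the upper bound they form a basis. The main obstacle is the internal-degree-$p$ computation for $\HL^2(\field,A)$, where one must rule out contributions from tensor monomials involving two or more distinct variables; the K\"unneth/self-duality route above reduces this to a single-variable Ext computation, but one could instead work directly with the cobar complex by organizing the degree-$p$ part according to the support $\{i:x_i\text{ appears}\}$ of monomials and checking that the support-preserving subcomplexes factor through the single-variable cobar complexes of $\field[x_i]/(x_i^p)$.
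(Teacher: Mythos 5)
Your plan is workable and genuinely different from the paper's (the paper gets the exact dimension of $\HL^2(\field,u(\mathfrak g))$ at once from Lemma \ref{DimExtCo} together with $u(\mathfrak g)^*\cong\field[C_p^n]$ and Quillen's computation, and then proves linear independence directly inside $\Omega u(\mathfrak g)$: since $d^1$ only involves the comultiplication one may assume $\mathfrak g$ abelian, grade by internal degree, and pass to the one--variable quotients $K/(x_2,\dots,x_n)$ to handle the $\omega(x_i)$). But your final step contains a genuine gap: the ``standard fact'' that cocycles whose leading parts give linearly independent classes in the cohomology of the associated graded complex are linearly independent in the cohomology of the filtered complex is false in general. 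A cocycle $z$ of filtration degree $s$ can satisfy $z=d(w)$ with $w$ of strictly higher filtration whose symbol is a graded cocycle; then the symbol of $z$ is a nonzero class in $H(\gr)$ while $[z]=0$ (in spectral sequence terms, a permanent cycle in $E_1$ can still be hit by an incoming differential). So independence in $\HL^2(\field,A)$ does not by itself transfer to $\HL^2(\field,u(\mathfrak g))$, and this transfer is exactly the heart of the proposition.

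The repair is short but must be supplied: $H^1$ of the graded cobar complex is $\Prim(A)$, which is concentrated in coradical degree one. Hence if $\sum_i a_i\,\omega(x_i)+\sum_{j<k}b_{jk}\,x_j\otimes x_k=d^1(w)$ in $\Omega u(\mathfrak g)$ and $w$ has coradical degree $t\ge 2$, then the top symbol of $d^1(w)$ is $d^1_{\gr}(\bar w)\neq 0$, so the leading part of the left-hand side ($\sum a_i\omega(x_i)$ in degree $p$, or $\sum b_{jk}x_j\otimes x_k$ in degree $2$) would be a coboundary in $\Omega A$, contradicting your graded computation; and $t\le 1$ forces $d^1(w)=0$. (Alternatively, drop the filtration altogether: as in the paper, the PBW basis shows the coalgebra structure constants of $u(\mathfrak g)$ do not involve the bracket or the restriction, so the whole computation may be carried out in $A$ directly.) Two smaller points: summing only the internal degrees $2$ and $p$ of $\HL^2(\field,A)$ presupposes the other internal degrees contribute nothing, which you have not argued --- it is cleaner to take the total dimension $n+\binom{n}{2}$ from the K\"unneth/duality computation (or, as the paper does, from $\Ext^2$ over $\field[C_p^n]$, which applies to $u(\mathfrak g)$ itself and makes the spectral sequence unnecessary); and identifying the degree-$p$ part with the span of the $\omega(x_i)$ through self-duality and K\"unneth requires tracking the internal grading through those isomorphisms --- the paper's quotient-to-one-variable argument is a more direct way to see that no nonzero $\sum\alpha_i\omega(x_i)$ is a coboundary.
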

\begin{proof}
Denote $K=u\left(\mathfrak g\right)$ and let $C_p^n$ be the elementary abelian $p$-group of rank $n$. It is clear that $K^*$ is isomorphic to $\field [C_p^n]$ as algebras. Then it follows from, e.g., \cite[P. 558 (4.1)]{QAST} that $\dim \HL^2(K^*, \field)=\dim \HL^2(C_p^n, \field)=n(n+1)/2$. Thus by Lemma \ref{DimExtCo}, $\dim \HL^2(\field,K)=n(n+1)/2$. First, it is direct to check that all $\omega(x_i)$ and $x_j\otimes x_k$ are cocycles in $\Omega K$. We only check for $x_j\otimes x_k$ here. Notice that $d^2=1\otimes I\otimes I-\Delta\otimes I+I\otimes \Delta-I\otimes I\otimes 1$. Thus
\begin{align*}
d^2\left(x_j\otimes x_k\right)&=1\otimes x_j\otimes x_k-\Delta(x_j)\otimes x_k+x_j\otimes \Delta(x_k)-x_j\otimes x_k\otimes 1\\
&=1\otimes x_j\otimes x_k-(x_j\otimes 1+1\otimes x_j)\otimes x_k+x_j\otimes(x_k\otimes 1+1\otimes x_k)-x_j\otimes x_k\otimes 1\\
&=0.
\end{align*}
Secondly, we need to show they are linearly independent in $\HL^2(\field,K)=\Ker\ d^2/\Img\ d^1$. We only deal with the case when $p\ge 3$. The remaining case of $p=2$ is similar. By the \PBW\ Theorem, $K$ has a basis formed by
\begin{align*}
\left\{x_1^{i_1}\ x_2^{i_2}\cdots x_n^{i_n}\ |\ 0\le i_1,i_2,\cdots,i_n\le p-1\right\}.
\end{align*}
Because the differential $d^1=1\otimes I-\Delta+I\otimes 1$ in $\Omega K$ only uses the comultiplication, without loss of generality, we can assume $\mathfrak g$ to be abelian. Suppose each variable $x_i$ of $K$ has degree one. Assign the usual total degree to any monomial in $K$. Also the total degree of a tensor product $A\otimes B$ in $K\otimes K$ is the sum of the degrees of $A$ and $B$ in $K$. Therefore $d^1$ preserves the degree from $K$ to $K\otimes K$ for any monomial. Notice that $\omega(x_i)$ has degree $p$ and $x_j\otimes x_k$ has degree two. We can treat them separately. Suppose that $\sum_i \alpha_i\omega(x_i)\in \Img d^1$. First, we consider the ideal $I=(x_2,\cdots,x_n)$ in $K$. By passing to the quotient $K/I$, we have $\alpha_1\omega(\overline{x_1})\in \Img\ \overline{d^1}$, where $\overline{d^1}: K/I\to K/I\otimes K/I$. But every monomial in $K/I$, which is generated by $x_1$, has degree less than $p$. This forces that $\alpha_1=0$. The same argument works for all the coefficients. Now suppose $\sum_{j< k} \alpha_{jk}x_j\otimes x_k\in \Img\ d^1$. Therefore there exists $\sum_{j\le k} \lambda_{jk}x_jx_k\in K$ such that
\begin{align*}
\sum_{j< k} \alpha_{jk}\ x_j\otimes x_k&=d^1\left(\sum_{j\le k} \lambda_{jk}\ x_jx_k\right)\\
&=\sum_{j\le k}\lambda_{jk} \left(1\otimes x_jx_k-\Delta(x_jx_k)+x_jx_k\otimes 1\right)\\
&=-\sum_{j\le k}\lambda_{jk}\left(x_j\otimes x_k+x_k\otimes x_j\right).
\end{align*}
By applying the \PBW\ Theorem to $K\otimes K$, we have all the coefficients equal zero. This completes the proof.
\end{proof}

\begin{lemma}\label{combinecohomologyclass}
Let $\mathfrak g$ be a restricted Lie algebra. Then the cocycle
\begin{align*}
\sum_{i=1}^{n} \alpha_i^p\ \omega\left(x_i\right)-\omega\left(\sum_{i=1}^n \alpha_i\ x_i\right)
\end{align*}
is zero in $\HL^2\left(\field,u(\mathfrak g)\right)$, where $x_i\in \mathfrak g$ and $\alpha_i\in \field$ for all $1\le i\le n$.
\end{lemma}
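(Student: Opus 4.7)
The plan is first to reduce to an additive statement and then to exhibit an explicit coboundary. Substituting $\alpha v$ directly into the definition of $\omega$ gives $\omega(\alpha v)=\alpha^{p}\omega(v)$ for every $v\in\mathfrak g$ and $\alpha\in\field$, so the assertion becomes
\[
\sum_{i=1}^{n}\omega(\alpha_i x_i)-\omega\!\left(\sum_{i=1}^{n}\alpha_i x_i\right)\in d^{1}(u(\mathfrak g)).
\]
By induction on $n$ this reduces to proving, for any $v,w\in\mathfrak g$, that the cocycle $\omega(v+w)-\omega(v)-\omega(w)$ lies in the image of $d^{1}$.

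For the binary case, the natural candidate is
\[
z_{0}=-\sum_{i=1}^{p-1}c_i\,v^{i}w^{p-i}\in u(\mathfrak g),\qquad c_i=\frac{(p-1)!}{i!(p-i)!},
\]
motivated by the classical identity $\tfrac1p[(V+W)^p-V^p-W^p]=\sum c_i V^i W^{p-i}$ in characteristic zero. When $v$ and $w$ commute in $u(\mathfrak g)$, expanding $\Delta(v^i w^{p-i})=(v\otimes 1+1\otimes v)^i(w\otimes 1+1\otimes w)^{p-i}$ via the binomial theorem and collecting terms shows directly that $d^{1}(z_{0})=\omega(v+w)-\omega(v)-\omega(w)$. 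I would verify this by straightforward expansion, using that $(v\otimes 1)$ and $(1\otimes v)$ always commute in $u(\mathfrak g)\otimes u(\mathfrak g)$.

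For general $v,w$ one must compensate for the non-commutativity via $wv=vw-[v,w]$ together with the fact that $[v,w]\in\mathfrak g$ is again primitive. Tracking the discrepancy $d^{1}(z_{0})-(\omega(v+w)-\omega(v)-\omega(w))$, one finds it rearranges into symmetric tensors of the form $a\otimes b+b\otimes a$ with $a,b$ primitive; each such tensor is itself a coboundary because a one-line calculation gives $d^{1}(ab)=-a\otimes b-b\otimes a$ whenever $a,b$ are primitive. Adjusting $z_{0}$ by suitable products of $[v,w]$ with monomials in $v,w$ thus produces a $z\in u(\mathfrak g)$ with $d^{1}(z)=\omega(v+w)-\omega(v)-\omega(w)$. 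The low-characteristic cases $p=2$, where $z_0=vw$ already works, and $p=3$, where one adds the correction $(v+w)[v,w]$, illustrate the pattern clearly.

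The main obstacle is the bookkeeping in the non-commuting case for large $p$: the expansion of $(v\otimes 1+1\otimes v)^i(w\otimes 1+1\otimes w)^{p-i}$ generates many monomials that must be sorted into commuting and commutator contributions. Conceptually everything reduces to symmetric tensors of primitive elements, which are visibly coboundaries, but producing a closed-form expression for the full correction to $z_0$ is the most delicate step. One may organize this by an induction on the number of factors of $[v,w]$ produced by repeatedly applying $wv=vw-[v,w]$, or by filtering $u(\mathfrak g)$ by PBW degree and pushing the non-commutative error to strictly lower filtration level at each stage.
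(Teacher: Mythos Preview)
Your reduction to the binary statement $\omega(v+w)-\omega(v)-\omega(w)\in\Img d^{1}$ is correct and is also the paper's first step, and your coboundary $z_{0}$ does the job in the commuting case. The gap is in the non-commuting step. The assertion that the discrepancy $d^{1}(z_{0})-\bigl(\omega(v+w)-\omega(v)-\omega(w)\bigr)$ is a sum of symmetric tensors $a\otimes b+b\otimes a$ with $a,b$ \emph{primitive} is only true for $p\le 3$; already for $p=5$ the error contains tensors such as $[v,w]\otimes v^{2}w$ and terms with iterated commutators, so it cannot be absorbed by products of two primitives. Your fallback of descending along the PBW filtration is a workable strategy, but it is not automatic: in degree~$2$ of the associated graded one meets the genuine cohomology classes $x_{j}\otimes x_{k}$, and you have not explained why the descending error avoids them. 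The missing observation is that $u(\mathfrak g)$ is cocommutative, so every coboundary is fixed by the flip $\tau$, and the cocycle $\omega(v+w)-\omega(v)-\omega(w)$ is $\tau$-invariant as well; hence the entire descent stays in the symmetric part of $K\otimes K$, where the degree-$2$ representatives $x_{j}\otimes x_{k}+x_{k}\otimes x_{j}$ are coboundaries. Without this symmetry argument the induction does not close, and the proposal as written is incomplete.

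The paper takes a different route that sidesteps all of this combinatorics. It lifts $\mathfrak g$ to a Lie algebra over the ring of integers $\mathcal O_{L}$ of a number field with $\mathcal O_{L}/(p)\cong\mathbb F_{q}$, forms the ordinary universal enveloping algebra $A=\mathcal U(\mathfrak g)$ over $\mathcal O_{L}$, and writes $(x+y)^{p}=x^{p}+y^{p}+z+p\,\Theta$ in $A$, where $z\in\mathfrak g$ is the commutator term from Lemma~\ref{palgebra}. Computing $\Delta\bigl((x+y)^{p}\bigr)$ in two ways gives
\[
p\bigl\{\omega(x)+\omega(y)-\omega(x+y)\bigr\}=p\bigl\{1\otimes\Theta-\Delta(\Theta)+\Theta\otimes 1\bigr\}
\]
in $A\otimes A$; since $A$ is a domain one cancels $p$ and then reduces to $u(\mathfrak g)$. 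This yields the coboundary relation uniformly in $p$ and in the Lie bracket, with no case analysis. Your approach, if completed with the symmetry argument above together with Proposition~\ref{Liealgebrainclusion}, would be more elementary and would in principle produce an explicit preimage, at the cost of significant bookkeeping; the paper's lifting argument is non-constructive but short.
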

\begin{proof}
Denote by $K$ the restricted universal enveloping algebra of $\mathfrak g$. First, it is direct to check that $\omega(x)$ is a cocycle in $(\Omega K,d)$ for any $x\in \mathfrak g$. Hence the expression in the statement is also a cocycle in $(\Omega K,d)$. We only need to show that it lies in the coboundary $\Img\ d^1$. Without loss of generality, we can assume $\mathfrak g$ to be finite-dimensional. Because $\field$ is algebraically closed in $\mathbb F_p$, we can replace $\field$ with some finite field $\mathbb F_q$. By basic algebraic number theory, there exists some number field $L\supset \mathbb Q$, where $p$ remains prime in the ring of integers $\mathcal O_L$ such that $\mathcal O_L/(p)=\mathbb F_q$. Now by choosing representatives for $\mathbb F_q$ in $\mathcal O_L$, we can view $\mathfrak g$ as a free module over $\mathcal O_L$ with a Lie bracket $[\ ,\ ]$, representing all the relations between a chosen basis for $\mathfrak g$. Denote by $A=\mathcal U(\mathfrak g)$ the universal enveloping algebra of $\mathfrak g$ over $\mathcal O_L$, which is a Hopf algebra as usual. There is a quotient map $\pi: A\to u(\mathfrak g)$, which factors through $A/(p)$. Therefore it suffices to prove that for any $x,y\in \mathfrak g$, there exists some $\Theta\in A$ such that
\begin{align}\label{addomega}
\omega(x)+\omega(y)-\omega(x+y)=1\otimes \Theta-\Delta(\Theta)+\Theta\otimes 1.
\end{align}
The general result will follow by applying the quotient map $\pi$ to \eqref{addomega}, and the induction on the number of variables appearing in the expression. By Lemma \ref{palgebra}, in $A\otimes_{\mathcal O_L}\mathcal O_L/(p)=A\otimes_{\mathcal O_L}\mathbb F_q=A/(p)$, there exists some $z\in \mathfrak g$ such that
\begin{align*}
(x+y)^p=x^p+y^p+z.
\end{align*} 
So back in $A$, we have some $\Theta \in A$ such that
\begin{align*}
(x+y)^p=x^p+y^p+z+p\ \Theta.
\end{align*} 
Thus in $A$, we can calculate $\Delta(x+y)^p$ in two different ways:
\begin{align*}
\Delta(x+y)^p&=\left(\Delta(x+y)\right)^p\tag{I}\\
&=\left((x+y)\otimes 1+1\otimes (x+y)\right)^p\\
&=(x+y)^p\otimes 1+1\otimes (x+y)^p+p\ \omega(x+y)\\
&=(x^p+y^p+z)\otimes 1+1\otimes (x^p+y^p+z)+p\ \Theta \otimes 1+1\otimes p\ \Theta+p\ \omega(x+y).
\end{align*}
On the other hand,
\begin{align*}
\Delta(x+y)^p&=\Delta\left(x^p+y^p+z+p\ \Theta \right)\tag{II}\\
&=x^p\otimes 1+1\otimes x^p+p\ \omega (x)+y^p\otimes 1+1\otimes y^p+p\ \omega(y)+z\otimes 1+1\otimes z+p\ \Delta(\Theta)\nonumber\\
&=(x^p+y^p+z)\otimes 1+1\otimes (x^p+y^p+z)+p\ \omega(x)+p\ \omega(y)+p\ \Delta(\Theta).\nonumber
\end{align*}
Therefore we have the following identity in $A\otimes A$.
\begin{align*}
p\ \{\omega(x)+\omega(y)-\omega(x+y)\}=p\ \{1\otimes \Theta-\Delta(\Theta)+\Theta\otimes 1\}.
\end{align*}
Since $A$ is a domain, we can cancel $p$ from both sides. This completes the proof.
\end{proof}

\begin{definition}
Let $H$ be a Hopf algebra. For any $x\in H$, define the adjoint map $T_x$ on $\Omega H$ by 
\begin{align*}
T^n_x=\sum_{i=0}^{n-1} I_i\otimes \ad (x)\otimes I_{n-i-1},
\end{align*}
where $\ad(x)(H)=[x,H]$.
\end{definition}

\begin{lemma}\label{chainmap}
If $H$ is any Hopf algebra, then $T_x$ is a degree zero cochain map from $\Omega H$ to itself for all $x\in \Prim(H)$. Moreover,  $\Prim(H)=\HL^1(\field,H)$ and $\bigoplus_{n\ge 0}\HL^n\left(\field,H\right)$ is a graded restricted $\Prim(H)$-module via the adjoint map.
\end{lemma}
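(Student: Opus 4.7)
The plan is to verify each of the three assertions in turn, reducing everything to two elementary facts about the adjoint action by a primitive element: (i) $\ad(x)\colon H\to H$ is a coderivation, meaning $\Delta\circ\ad(x)=(\ad(x)\otimes I+I\otimes\ad(x))\circ\Delta$, and (ii) $\ad(x)(1)=0$. Both follow immediately from $\Delta(x)=x\otimes 1+1\otimes x$ together with the fact that $\Delta$ is an algebra map.

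For the first claim, I would check term by term that $d^n\circ T_x^n=T_x^{n+1}\circ d^n$. The boundary summands $1\otimes I_n$ and $(-1)^{n+1}I_n\otimes 1$ of $d^n$ commute with $T_x$ because when $T_x^{n+1}$ acts on the freshly inserted $1$ the resulting contribution vanishes by (ii), while its action on the remaining tensor factors reproduces $T_x^n$. For each interior summand $(-1)^{i+1}I_i\otimes\Delta\otimes I_{n-i-1}$, I would expand $T_x^{n+1}$ as a sum of adjoints at each of the $n+1$ output positions; the positions unaffected by the $\Delta$ insertion recover the matching summands of $T_x^n$, while the two positions produced by $\Delta$ reassemble via (i) into $\Delta$ applied after $\ad(x)$, matching the same term on the other side. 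For the second claim, $d^0=0$ forces $\HL^1(\field,H)=\Ker d^1$, and $d^1(f)=1\otimes f-\Delta(f)+f\otimes 1=0$ is precisely the condition $f\in\Prim(H)$.

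For the third claim, I would verify the restricted Lie relations already at the chain level. Writing $T_x^n=\sum_{i=0}^{n-1}\ad(x)_i$ with $\ad(x)_i$ acting as $\ad(x)$ on the $i$-th tensor factor and as the identity elsewhere, the operators $\ad(x)_i$ and $\ad(y)_j$ commute whenever $i\ne j$, hence
\begin{align*}
[T_x^n,T_y^n]=\sum_i[\ad(x)_i,\ad(y)_i]=\sum_i\ad([x,y])_i=T_{[x,y]}^n.
\end{align*}
The same commuting-positions observation together with the characteristic-$p$ identity $(A+B)^p=A^p+B^p$ for commuting operators gives $(T_x^n)^p=\sum_i\ad(x)_i^p$; combined with the identity $\ad(x)^p=\ad(x^p)$ in an associative $\field$-algebra in characteristic $p$ (a direct consequence of Lemma \ref{palgebra} applied to $\ad(x)=[x,-]$), this yields $(T_x^n)^p=T_{x^p}^n$. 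Since $T_x$ is degree zero and a cochain map by the first part, both chain-level identities descend to cohomology and endow $\bigoplus_{n\ge 0}\HL^n(\field,H)$ with the desired graded restricted $\Prim(H)$-module structure. The only delicate step is the index and sign bookkeeping in the cochain-map verification; once (i) and (ii) are in place the remaining assertions are essentially inspection.
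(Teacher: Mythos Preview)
Your proposal is correct. The only substantive difference from the paper's argument is in how you organize the verification that $T_x$ is a cochain map. The paper proceeds by induction on $n$, using the recursive decomposition
\[
d^n=d^{n-1}\otimes I+(-1)^{n-1}I_{n-1}\otimes d^1,
\]
and reducing everything to the base cases $n=0,1$. You instead check each summand of $d^n$ directly, isolating the two structural facts that make it work: $\ad(x)$ kills $1$ and $\ad(x)$ is a coderivation. Your route is slightly more conceptual in that it names the coderivation identity explicitly (the paper's inductive step uses it implicitly in the $n=1$ base case), while the paper's recursion avoids the position-by-position bookkeeping you flag as the delicate step. Both are equally elementary; neither buys a real advantage.

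For the restricted-module structure, your argument is exactly what the paper has in mind when it writes ``It is clear that $[T_x,T_y]=T_{[x,y]}$ and $T_x^p=T_{x^p}$''; you have simply unpacked the details. The identification $\Prim(H)=\HL^1(\field,H)$ is handled identically.
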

\begin{proof}
First, for simplicity write $T=T_x$ for some $x\in \Prim(H)$. We prove $d^nT^n=T^{n+1}d^n$ inductively for all $n\ge 0$. It is easy to check that it holds for $n=0,1$. Notice that
\begin{eqnarray*}
d^n=d^{n-1}\otimes I+(-1)^{n-1}I_{n-1}\otimes d^1,
\end{eqnarray*} 
for all $n\ge 2$. Thus
\begin{align*}
&d^n T^n\\
&=\left(d^{n-1}\otimes I+(-1)^{n-1}I_{n-1}\otimes d^1\right)\left(T^{n-1}\otimes I+I_{n-1}\otimes T^1\right)\\
&=d^{n-1}T^{n-1}\otimes I+d^{n-1}\otimes T^1+(-1)^{n-1}T^{n-1}\otimes d^1+(-1)^{n-1}I_{n-1}\otimes d^1T^1\\
&=T^{n}d^{n-1}\otimes I+d^{n-1}\otimes T^1+(-1)^{n-1}T^{n-1}\otimes d^1+(-1)^{n-1}I_{n-1}\otimes T^2d^1\\
&=T^{n}d^{n-1}\otimes I+d^{n-1}\otimes T^1+(-1)^{n-1}\left(T^{n-1}\otimes I_2+I_{n-1}\otimes T^1\otimes I\right)\left(I_{n-1}\otimes d^1\right)+(-1)^{n-1}I_{n-1}\otimes (I\otimes T^1)d^1\\
&=T^{n}d^{n-1}\otimes I+d^{n-1}\otimes T^1+(-1)^{n-1}\left(T^{n}\otimes I\right)\left(I_{n-1}\otimes d^1\right)+(-1)^{n-1}I_{n-1}\otimes \left(I\otimes T^1\right)d^1\\
&=\left(T^{n}\otimes I+I_n\otimes T^1\right)\left(d^{n-1}\otimes I+(-1)^{n-1}I_{n-1}\otimes d^1\right)\\
&=T^{n+1}d^n
\end{align*}
Therefore $T$ induces an action of $\Prim(H)$ on $\HL^n(\field,H)$ for each $n$. Moreover, we know $\Prim(H)$ is a restricted Lie algebra via the $p$-th power map in $H$. It is clear that $[T_x,T_y]=T_{[x,y]}$ and $T_x^p=T_{x^p}$ for any $x,y\in \Prim(H)$. Hence $\bigoplus_{n\ge 0}\HL^n\left(\field,H\right)$ becomes a graded restricted $\Prim(H)$-module via $T$. Finally, $\Prim(H)\cong \HL^1(\field,H)$ by definition.
\end{proof}

\begin{theorem}\label{Cohomologylemma}
Let $K\subseteq H$ be an inclusion of connected Hopf algebras with first order $n\ge 2$. Then the differential $d^1$ induces an injective restricted $\mathfrak g$-module map
\[
\xymatrix{
H_n/K_n\ar@{^(->}[r]&\HL^2(\field,K),
}\]
where $\mathfrak g=\Prim(H)$.
\end{theorem}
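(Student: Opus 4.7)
The plan is to send $x \in H_n$ to the cohomology class of the cocycle $d^1(x) = x\otimes 1 + 1\otimes x - \Delta(x)$ and show this is well-defined, injective on $H_n/K_n$, and $\mathfrak g$-equivariant.

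First I would check that $d^1(x) \in K\otimes K$ for every $x \in H_n$, so that it makes sense as a $2$-cochain in $\Omega K$. This follows from the standard containment
\[
\Delta(x) - x\otimes 1 - 1\otimes x \;\in\; \sum_{i=1}^{n-1} H_i\otimes H_{n-i}
\]
valid in any connected Hopf algebra, combined with $H_i = K_i$ for $i\le n-1$ (given by the first-order hypothesis); here $n\ge 2$ forces $1\le i, n-i\le n-1$, so every factor above lies in $K$. Because $d^2 d^1 = 0$ in $\Omega H$ and the differential of $\Omega K$ is just the restriction of that of $\Omega H$, this $d^1(x)$ is automatically a $2$-cocycle in $\Omega K$, giving a $\field$-linear map $\phi\colon H_n \to \HL^2(\field, K)$, $\phi(x) = [d^1(x)]$.

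Next I would identify the kernel of $\phi$. The containment $K_n \subseteq \ker\phi$ is immediate since for $x\in K_n \subseteq K$ the element $d^1(x)$ is a coboundary in $\Omega K$ by definition. Conversely, if $\phi(x) = 0$, then $d^1(y) = d^1(x)$ for some $y\in K$, so $x-y$ is a $1$-cocycle and therefore a primitive element of $H$. Since $n\ge 2$, we have $\Prim(H)\subseteq H_1 = K_1 \subseteq K$, so $x\in K$, and combined with $x\in H_n$ this yields $x\in K\cap H_n = K_n$ by Remark \ref{BHC}. Thus $\phi$ descends to the desired injection $H_n/K_n \hookrightarrow \HL^2(\field, K)$.

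Finally I would check $\mathfrak g$-equivariance. Because $\mathfrak g = \Prim(H) \subseteq K_1 \subseteq \Prim(K)$, Lemma \ref{chainmap} turns $\HL^2(\field, K)$ into a restricted $\mathfrak g$-module via the cochain maps $T_g$. By Corollary \ref{productcoradical} we have $[\mathfrak g, H_n]\subseteq H_n$ and $[\mathfrak g, K_n]\subseteq K_n$, so $\mathfrak g$ acts on $H_n/K_n$ by $\ad$; this is a restricted action since left and right multiplications commute and hence $(\ad g)^p = \ad(g^p)$ in characteristic $p$. Equivariance of the induced map then reduces to the chain identity $d^1 T^1_g = T^2_g d^1$ established inside Lemma \ref{chainmap}, which yields $\phi([g,x]) = [T^2_g d^1(x)] = g\cdot \phi(x)$.

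The main obstacle is the kernel computation: one must argue that a $1$-cocycle $x-y$ with $y\in K$ automatically lies in $K$, which uses the hypothesis $n\ge 2$ in an essential way (for $n=1$ the containment $\Prim(H)\subseteq K$ can fail, and indeed the theorem would be false). Everything else is a routine bookkeeping exercise once the ambient chain-level identities in $\Omega H$ are combined with the coradical-filtration matching $H_i = K_i$ for $i\le n-1$.
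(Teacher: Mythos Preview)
Your proof is correct and follows essentially the same approach as the paper: map $x\in H_n$ to the class of $d^1(x)$ in $\HL^2(\field,K)$ using $H_{n-1}=K_{n-1}$, identify the kernel as $K_n$ via the primitivity of $x-y$ and $\Prim(H)\subseteq K$, and deduce equivariance from the chain identity $d^1T^1_g=T^2_gd^1$ of Lemma~\ref{chainmap}. One tiny slip: $K_1\subseteq\Prim(K)$ is not literally true (rather $K_1=\field\oplus\Prim(K)$), but what you need, namely $\Prim(H)=\Prim(K)$, follows immediately from $H_1=K_1$.
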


\begin{proof}
By Corollary \ref{productcoradical}, $H_n$ becomes a restricted $\mathfrak g$-module via the adjoint action since $[\Prim(H),H_n]\subseteq [H_1,H_n]\subseteq H_n$. We know $\mathfrak g=\Prim(H)=\Prim(K)$ for the inclusion has first order $n\ge 2$. Hence the $\mathfrak g$-action factors through $H_n/K_n$. Choose any $x\in H_n$. We know $d^1(x)=1\otimes x-\Delta(x)+x\otimes 1\in H_{n-1}\otimes H_{n-1}=K_{n-1}\otimes K_{n-1}\subseteq K\otimes K$ by \cite[Lemma 5.3.2]{MO93}. Furthermore, we can view $(\Omega K,d_K)$ as a subcomplex of $(\Omega H,d_H)$. Then $d_K^2d_H^1(x)=d_H^2d_H^1(x)=0$. Hence $d^1(x)$ is a cocycle in $\Omega K$ and $d^1$ maps $H_n$ into $\HL^2(\field,K)$. The map $d^1$ factors through $H_n/K_n$ for $d^2d^1(K_n)=0$. To show the induced map is injective, suppose $d^1(x)\in\Img\ d_K^1$. Then there exists some $y\in K$ such that $d^1(x)=d^1(y)$, which implies that $d^1(x-y)=0$. By definition, we have $x-y\in \Prim(H)=\Prim(K)$. Hence $x\in K\bigcap H_n=K_n$ by Remark \ref{BHC}. Finally, $d^1$ is compatible with the $\mathfrak g$-action on $\HL^2(\field,K)$ by Lemma \ref{chainmap}.
\end{proof}

\begin{theorem}\label{HCT}
Let $\mathfrak g$ be a restricted Lie algebra with basis $\{x_1,x_2,\cdots,x_n\}$. Suppose $u(\mathfrak g)\subsetneq H$ is an inclusion of connected Hopf algebras. Then there exists some $x\in H\setminus u(\mathfrak g)$ such that
\begin{align*}
\Delta(x)=x\otimes 1+1\otimes x+\omega\left(\sum_i\alpha_ix_i\right)+\sum_{j<k}\alpha_{jk}x_j\otimes x_k
\end{align*}
with coefficients $\alpha_i,\alpha_{jk}\in \field$. Moreover,  the first order for the inclusion can only be $1$, $2$ or $p$. 
\end{theorem}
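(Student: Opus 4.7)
My plan is to apply Theorem \ref{Cohomologylemma} and Proposition \ref{Liealgebrainclusion} to the inclusion $K:=u(\mathfrak g)\subsetneq H$, and then use Lemma \ref{combinecohomologyclass} together with the surjectivity of the Frobenius on $\field$ to modify a chosen representative of $H_n/K_n$ by a coboundary, producing an element whose reduced coproduct has the prescribed explicit shape.

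Let $n$ denote the first order of the inclusion. When $n=1$, any primitive $x\in H_1\setminus K_1$ already satisfies $\Delta(x)=x\otimes 1+1\otimes x$, matching the stated form with vanishing coefficients. When $n\ge 2$, I would pick $x_0\in H_n\setminus K_n$; Theorem \ref{Cohomologylemma} guarantees that $[d^1(x_0)]\in\HL^2(\field,K)$ is nonzero, and Proposition \ref{Liealgebrainclusion} allows the expansion
\[
[d^1(x_0)]=\sum_i\gamma_i[\omega(x_i)]+\sum_{j<k}\gamma_{jk}[x_j\otimes x_k].
\]
Since $\field$ is algebraically closed of characteristic $p$, solve $\alpha_i^p=\gamma_i$; Lemma \ref{combinecohomologyclass} then collapses the $\omega$-sum to the single class $[\omega(\sum_i\alpha_i x_i)]$. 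Picking $y\in K$ so that $d^1(y)$ realizes the resulting coboundary and setting $x:=x_0-y\in H\setminus K$, one obtains $d^1(x)=\omega(\sum_i\alpha_i x_i)+\sum_{j<k}\gamma_{jk}\,x_j\otimes x_k$; the identity $\Delta(x)-x\otimes 1-1\otimes x=-d^1(x)$ combined with $-\omega(z)=\omega(-z)$ (trivial in characteristic $2$) then yields the stated comultiplication after renaming coefficients.

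For the ``moreover'' part my plan is to pass to the associated graded inclusion $\gr K\subseteq\gr H$. Comparing graded pieces via Remark \ref{BHC} shows this inclusion still has first order $n$, and by Theorem \ref{generalgrc} together with Proposition \ref{BPH}(5)--(6), $\gr K$ is the restricted enveloping algebra of an abelian restricted Lie algebra with trivial $p$-map generated in internal degree $1$. Applying Theorem \ref{Cohomologylemma} and Proposition \ref{Liealgebrainclusion} to $\gr K\subseteq\gr H$ produces an injection $\gr H(n)/\gr K(n)\hookrightarrow\HL^2(\field,\gr K)$ whose image lies in the internal degree-$n$ component of this now graded cohomology. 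Since the basis classes $[\omega(\bar x_i)]$ and $[\bar x_j\otimes\bar x_k]$ sit in internal degrees $p$ and $2$ respectively, nonvanishing of the degree-$n$ piece forces $n\in\{2,p\}$.

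The hard part will be the ``moreover'' step: I need to check carefully that the first order is preserved on passing to $\gr$, and that Theorem \ref{Cohomologylemma} in the graded setting produces an image in a single internal degree (which requires the naturality of $d^1$ with respect to the grading). Once this bookkeeping is settled, the degree restriction from Proposition \ref{Liealgebrainclusion} immediately pins $n$ down to $\{2,p\}$.
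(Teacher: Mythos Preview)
Your proposal is correct and follows essentially the same route as the paper. For the existence of $x$ with the prescribed comultiplication you do exactly what the paper does: apply Theorem~\ref{Cohomologylemma} and Proposition~\ref{Liealgebrainclusion}, take $p$-th roots of the $\omega$-coefficients, and correct by the coboundary supplied by Lemma~\ref{combinecohomologyclass}. For the ``moreover'' clause, the paper compresses the argument into the single remark that $\gr H$ is coradically graded and then asserts $d\in\{2,p\}$; your plan to pass explicitly to $\gr K\subseteq\gr H$, identify $\gr K$ as the restricted enveloping algebra of an abelian Lie algebra with trivial $p$-map, and read off the internal degree of the image in $\HL^2(\field,\gr K)$ is precisely the natural way to unpack that remark, and your bookkeeping checks (preservation of first order under $\gr$, homogeneity of $d^1$) are the right ones.
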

\begin{proof}
Denote by $d$ the first order for the inclusion. By definition, $d=1$ implies that $\mathfrak g\subsetneq \Prim(H)$. Then we can find some primitive element $x\in \Prim(H)\setminus \mathfrak g\subseteq H\setminus u(\mathfrak g)$ such that $\Delta(x)=x\otimes 1+1\otimes x$. In the following, we may assume $d\ge 2$. By Theorem \ref{Cohomologylemma} and Proposition \ref{Liealgebrainclusion}, there exists $x\in H_d\setminus u(\mathfrak g)$ such that
\begin{align*}
1\otimes x-\Delta(x)+x\otimes 1=d^1(x)=-\sum_i \alpha_i^p\ \omega(x_i)-\sum_{j<k}\alpha_{jk}\ x_j\otimes x_k.\tag{I}\label{E1}
\end{align*}
By the choice of $x$, we know the coefficients are not all zero. By Lemma \ref{combinecohomologyclass}, there exists some $y\in u(\mathfrak g)$ such that
\begin{align*}
1\otimes y-\Delta(y)+y\otimes 1=d^1(y)=\sum_{i}\alpha_i^p\ \omega(x_i)-\omega\left(\sum_{i}\alpha_i\ x_i\right).&\tag{II}\label{E2}
\end{align*}
If we add \eqref{E1} to \eqref{E2}, then we have
\begin{align*}
(x+y)\otimes 1-\Delta(x+y)+1\otimes (x+y)=-\omega\left(\sum_{i}\alpha_i\ x_i\right)-\sum_{j<k}\alpha_{jk}\ x_j\otimes x_k.
\end{align*}
This implies that
\[
\Delta(x+y)=(x+y)\otimes 1+1\otimes (x+y)+\omega\left(\sum_{i}\alpha_i\ x_i\right)+\sum_{j<k}\alpha_{jk}\ x_j\otimes x_k.
\]
It is clear that $x+y\in H\setminus u(\mathfrak g)$. Finally, because the associated graded Hopf algebra $\gr H$ is coradically graded as mentioned in \cite[Def. 1.13]{Andruskiewitsch02pointedhopf}, it is easy to see that if all $\alpha_i=0$ then $d=2$. Otherwise $d=p$. Hence the first order $d$ can only be $1$, $2$ or $p$. This completes the proof.
\end{proof}

\section{Connected Hopf algebras of dimension $p^2$}
The starting point for classifying finite-dimensional connected Hopf algebras turns out to be when the dimension of the Hopf algebras is just $p$. It is obvious that such Hopf algebras are primitively generated, i.e., by some primitive element $x$. As a consequence of the characteristic of the base field, $x^p$ is still primitive. This implies that $x^p=\lambda x$ for some $\lambda\in\field$, since the dimension of  the primitive space is one. By rescaling of the variable, we can always assume the coefficient $\lambda$ to be zero or one. Thus we have the following result:
\begin{theorem}\label{D1}
All connected Hopf algebras of dimension $p$ are isomorphic to either $\field[x]/(x^p)$ or $\field[x]/(x^p-x)$, where $x$ is primitive.
\end{theorem}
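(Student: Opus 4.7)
The plan is to argue, first, that $H$ must be the restricted universal enveloping algebra of a one-dimensional restricted Lie algebra, and then to read off the possible restricted $p$-maps up to rescaling of the generator.

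First I would show $\dim \Prim(H)=1$. Since $H$ is connected with $\dim H = p > 1$, exhaustiveness of the coradical filtration forces $\Prim(H)\neq 0$, so $\dim \Prim(H)\ge 1$. Let $K$ be the Hopf subalgebra of $H$ generated by $\Prim(H)$; this is a connected Hopf subalgebra by Remark \ref{BHC}, and $\Prim(K)=\Prim(H)\cap K=\Prim(H)$ since $\Prim(H)\subseteq K$. By Proposition \ref{BPH}(5), $K\cong u(\Prim(H))$, and by Proposition \ref{BPH}(6) this has dimension $p^{\dim\Prim(H)}$. The inclusion $K\subseteq H$ with $\dim H = p$ then forces $\dim \Prim(H)=1$ and $K=H$.

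Next I would pick a nonzero $x\in\Prim(H)$, so $H\cong u(\field\cdot x)$. A standard computation applied to $\Delta(x)=x\otimes 1+1\otimes x$ in characteristic $p$ (cf.\ Lemma \ref{palgebra}) shows that $x^p$ is again primitive, so $x^p = \lambda x$ for some $\lambda\in\field$ by the one-dimensionality of $\Prim(H)$. This determines the restricted $p$-map on $\field\cdot x$, and via the \PBW\ theorem one reads off $H\cong \field[x]/(x^p-\lambda x)$.

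Finally I would rescale to normalize $\lambda$: if $\lambda=0$ we immediately get $H\cong \field[x]/(x^p)$; if $\lambda\ne 0$, since $\field$ is algebraically closed I pick $\mu\in\field$ with $\mu^{p-1}=\lambda^{-1}$ and replace $x$ by $\mu x$, under which the relation becomes $x^p=x$, giving $H\cong \field[x]/(x^p-x)$. These two algebras are non-isomorphic since $\field[x]/(x^p)$ is local whereas $\field[x]/(x^p-x)\cong \field^p$ is semisimple. The only real content is the opening reduction $\dim \Prim(H)=1$; once that is in place the rest is essentially a one-line computation followed by a rescaling.
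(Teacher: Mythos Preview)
Your proposal is correct and follows essentially the same route as the paper: the paper's proof is the short paragraph immediately preceding the theorem, which asserts that $H$ is primitively generated by a single primitive $x$, observes $x^p\in\Prim(H)$ so $x^p=\lambda x$, and rescales. The only difference is that where the paper says ``it is obvious that such Hopf algebras are primitively generated,'' you actually justify it via Proposition~\ref{BPH}(5),(6), and you add the (easy) check that the two outcomes are non-isomorphic; both additions are welcome but do not change the strategy.
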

\begin{corollary}
All local Hopf algebras of dimension $p$ are isomorphic to $\field[x]/(x^p)$ with comultiplication either $\Delta(x)=x\otimes 1+1\otimes x$ or $\Delta(x)=x\otimes 1+1\otimes x+x\otimes x$.
\end{corollary}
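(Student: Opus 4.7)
The plan is to use Hopf duality to reduce the corollary to the preceding Theorem \ref{D1}. Since $H$ is local of dimension $p$, Proposition \ref{BPH}(1) gives that $H^*$ is a connected Hopf algebra of dimension $p$, and by Theorem \ref{D1} we must have $H^* \cong \field[z]/(z^p)$ or $H^* \cong \field[z]/(z^p-z)$, with $z$ primitive. I would first pin down the algebra structure of $H$ once and for all: in both cases $H^*$ is primitively generated with $\dim \Prim(H^*)=1$, so Proposition \ref{BPH}(4) gives $\dim J/J^2 = 1$, where $J$ is the Jacobson radical of $H$. Thus $J = (x)$ is principal in the local ring $H$, and since $J$ is nilpotent with $\dim H = p$, the only possibility is $H \cong \field[x]/(x^p)$ as algebras.

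Next I would determine the coalgebra structure case by case. In Case $1$, $H^* = u(\mathfrak g)$ for the one-dimensional restricted Lie algebra $\mathfrak g = \field z$ with $z^{[p]}=0$. Choosing the dual basis $\{e_0,\ldots,e_{p-1}\}$ to $\{1,z,\ldots,z^{p-1}\}$, one dualizes the product $z^i z^j = z^{i+j}$ (mod $z^p$) of $H^*$ to obtain $\Delta(e_k) = \sum_{i+j=k} e_i \otimes e_j$, i.e.\ $H$ is the divided power coalgebra. Dualizing the coproduct $\Delta(z^n) = \sum_i \binom{n}{i} z^i \otimes z^{n-i}$ gives $e_i e_j = \binom{i+j}{i} e_{i+j}$, so setting $x := e_1$ we get $e_1^k = k!\,e_k$ for $k<p$ and $e_1^p = 0$. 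In particular $x$ is primitive ($\Delta(e_1) = e_0 \otimes e_1 + e_1 \otimes e_0$), which is the first announced comultiplication.

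In Case $2$, since $\field$ is algebraically closed of characteristic $p$ we have $z^p - z = \prod_{\alpha \in \mathbb F_p}(z-\alpha)$, so $H^* \cong \field^{\mathbb F_p}$ as an algebra, via orthogonal idempotents $\{e_\alpha\}_{\alpha \in \mathbb F_p}$ satisfying $z = \sum_\alpha \alpha\, e_\alpha$. The algebra maps $H^* \to \field$ are evaluation at each $\alpha$, and the group structure on these characters is dual to $\Delta$: the primitivity of $z$ gives $(\chi_\alpha \cdot \chi_\beta)(z) = \alpha+\beta$, so the character group is $(\mathbb F_p,+) \cong \mathbb Z/p$. Hence $H \cong \field[\mathbb Z/p]$ as a Hopf algebra. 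Writing $g$ for a generator of $\mathbb Z/p$ and setting $x := g-1$, we have $x^p = (g-1)^p = g^p - 1 = 0$, so again $H = \field[x]/(x^p)$, and since $g$ is group-like, $\Delta(x) = \Delta(g)-1\otimes 1 = x\otimes 1 + 1\otimes x + x\otimes x$, which is the second announced comultiplication.

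There is no real obstacle: the strategy is a clean dualization, and the hardest step is simply bookkeeping the dual bases in Case $1$. One small point to watch is that in Case $2$ one must invoke algebraic closedness (or at least that $\field$ contains $\mathbb F_p$) to split $z^p-z$, but this is given in the standing hypothesis on $\field$. Together the two cases exhaust the possibilities and produce exactly the two Hopf structures claimed.
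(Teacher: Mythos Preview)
Your proof is correct, but it takes a more computational route than the paper's. The paper argues by counting: duality gives a bijection between $p$-dimensional local Hopf algebras and $p$-dimensional connected Hopf algebras, so by Theorem \ref{D1} there are exactly two isomorphism classes; it then simply exhibits the two candidate Hopf structures on $\field[x]/(x^p)$, observes that one is connected (since $x$ is primitive) while the other is cosemisimple (since $x+1$ is group-like), and concludes that these two non-isomorphic local Hopf algebras must exhaust the list. No explicit dualization is performed.

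You instead compute the dual of each connected Hopf algebra directly: in Case~1 via the divided-power basis, and in Case~2 by recognizing $H^*\cong\field^{\mathbb F_p}$ and hence $H\cong\field[\mathbb Z/p]$. This buys you an explicit identification of which local Hopf algebra is dual to which connected one, information the paper's proof does not supply (the paper only recovers this later, in the remark following Corollary \ref{localp2}). The paper's argument is shorter and avoids bookkeeping, while yours is more constructive. One minor remark: in Case~2 you do not actually need $\field$ to be algebraically closed, only that $\mathbb F_p\subseteq\field$, which is automatic from $\mathrm{char}\,\field=p$.
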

\begin{proof}
By Proposition \ref{BPH}(1), $p$-dimensional local Hopf algebras are in one-to-one correspondence with $p$-dimensional connected Hopf algebras by vector space dual. Therefore by Theorem \ref{D1}, there are two non-isomorphic classes of local Hopf algebras of dimension $p$. It is clear that $\field [x]/(x^p)$ is a local algebra of dimension $p$. Regarding the coalgebra structure, when $\Delta(x)=x\otimes 1+1\otimes x$, it is connected. When $\Delta(x)=x\otimes 1+1\otimes x+x\otimes x$, $\Delta(x+1)=(x+1)\otimes (x+1)$, which is a group-like element. Therefore it is cosemisimple. They are certainly non-isomorphic as coalgebras. 
\end{proof}
In the rest of the section, we concentrate on the classification of connected Hopf algebras of dimension $p^2$. We first consider the case when $\dim \Prim(H)=1$. By Corollary \ref{FCLH}, we have $\field\subset K\subset H$, where $K$ is generated by some $x\in \Prim(H)$. By Proposition \ref{BPH}(5), we know $K$ is isomorphic to the restricted universal enveloping algebra of the one-dimensional restricted Lie algebra spanned by $x$. Therefore by Proposition \ref{Liealgebrainclusion}, $\HL^2(\field,K)$ is one-dimensional with the basis representing by the element 
\begin{align*}
\omega(x)=\sum_{i=1}^{p-1}\ \frac{(p-1)!}{i!(p-i)!}\ x^i\otimes x^{p-i}.
\end{align*} 
Furthermore, by Theorem \ref{HCT}, there exists some $y\in H\setminus K$ such that $\Delta\left(y\right)=y\otimes 1+1\otimes y+\omega(x)$.

\begin{lemma}\label{D2P1C}
Let $H$ be a connected Hopf algebra of dimension $p^2$ with $\dim\Prim(H)=1$. Then $H$ is isomorphic to one of the following
\begin{itemize}
\item[(1)] $\field\left[x,y\right]/(x^p,y^p)$,
\item[(2)] $\field\left[x,y\right]/(x^p,y^p-x)$,
\item[(3)] $\field\left[x,y\right]/(x^p-x,y^p-y)$,
\end{itemize}
where the coalgebra structure is given by 
\begin{align}\label{comultiplicationD1P1}
\Delta(x)&=x\otimes 1+1\otimes x,\\
\Delta(y)&=y\otimes 1+1\otimes y+\omega(x).\notag
\end{align}
\end{lemma}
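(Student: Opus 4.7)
The preceding discussion gives us $x\in\Prim(H)$ with $x^p=\lambda x$ for some $\lambda\in\field$ and $y\in H\setminus K$ satisfying $\Delta(y)=y\otimes 1+1\otimes y+\omega(x)$. My plan is: (i) show $H$ is commutative by forcing $[x,y]=0$; (ii) identify $y^p$ modulo $\Prim(H)$ via a direct coproduct computation; (iii) normalize by rescaling and Artin--Schreier to land in one of the three listed presentations.

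For step (i), expanding $\Delta([x,y])=[\Delta(x),\Delta(y)]$ and using that $\omega(x)\in K\otimes K$ commutes with both $x\otimes 1$ and $1\otimes x$ collapses the right-hand side to $[x,y]\otimes 1+1\otimes[x,y]$, so $[x,y]\in\Prim(H)=\field x$; write $[x,y]=\mu x$. I claim $\mu=0$, to be deduced by combining two identities. First, $\ad(y)$ is an associative-algebra derivation, so $\ad(y)(x^p)=px^{p-1}\ad(y)(x)=0$ in characteristic $p$; together with $x^p=\lambda x$ this gives $\lambda\mu=0$. Second, I compute $\Delta(y^p)=\Delta(y)^p$. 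The key observation is the cancellation $[y\otimes 1+1\otimes y,\omega(x)]=0$, which follows from $[y,x^k]=-k\mu x^k$ and the identity $\sum_{i=1}^{p-1}c_i(i+(p-i))=p\sum c_i=0$, where $c_i=\frac{(p-1)!}{i!(p-i)!}$. Combined with $[y\otimes 1,1\otimes y]=0$, this says $y\otimes 1+1\otimes y$ and $\omega(x)$ commute, so the Frobenius identity in characteristic $p$ yields $\Delta(y^p)=y^p\otimes 1+1\otimes y^p+\omega(x)^p$. Using commutativity of $K$ together with $x^p=\lambda x$ gives $\omega(x)^p=\lambda^p\omega(x)$, whence $y^p-\lambda^p y$ is primitive, say $y^p=\lambda^p y+\nu x$ for some $\nu\in\field$. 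The standard restricted-Lie identity $\ad(y^p)=\ad(y)^p$ evaluated at $x$ then yields $-\lambda^p\mu x=[y^p,x]=-\mu^p x$, so $\mu^p=\lambda^p\mu$. Combining: if $\mu\ne 0$, then $\lambda\mu=0$ forces $\lambda=0$, and then $\mu^p=\lambda^p\mu=0$ forces $\mu=0$, a contradiction. Hence $\mu=0$ and $H$ is commutative. Step (ii) is now done: $y^p=\lambda^p y+\nu x$.

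For step (iii) I split on $\lambda$. If $\lambda=0$ and $\nu=0$, we are in case (1) directly. If $\lambda=0$ and $\nu\ne 0$, solve $\gamma^{p^2-1}=\nu^{-1}$ in the algebraically closed field $\field$ and replace $(x,y)$ by $(\gamma x,\gamma^p y)$; since $\omega(\gamma x)=\gamma^p\omega(x)$, the coproduct keeps the form $y\otimes 1+1\otimes y+\omega(x)$ and the algebra becomes case (2). If $\lambda\ne 0$, first choose $\gamma$ with $\gamma^{p-1}=\lambda^{-1}$ and rescale $(x,y)\mapsto(\gamma x,\gamma^p y)$ to obtain $x^p=x$ while preserving the coproduct form; this produces $y^p=y+\nu' x$ for some $\nu'\in\field$. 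Finally, solve the Artin--Schreier equation $\tau^p-\tau=-\nu'$ (solvable in the algebraically closed $\field$) and replace $y$ by $y+\tau x$: since $\tau x$ is primitive the coproduct form is preserved, and the relation becomes $y^p=y$, giving case (3). A dimension count against Theorem \ref{FREENESS} (which supplies the $K$-basis $\{1,y,\dots,y^{p-1}\}$) shows the resulting surjection from the stated polynomial ring onto $H$ is an isomorphism.

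\emph{Main obstacle.} The heart of the proof is the commutativity step: neither $\lambda\mu=0$ nor $\mu^p=\lambda^p\mu$ alone forces $\mu=0$, but the two together do. The subtle point is computing $\Delta(y^p)$ cleanly: rather than a full Jacobson-style iterated-commutator expansion of $(y\otimes 1+1\otimes y+\omega(x))^p$, the cancellation $[y\otimes 1+1\otimes y,\omega(x)]=0$ reduces everything to ordinary Frobenius in characteristic $p$. Once commutativity is established, the remaining normalization is just root extraction and Artin--Schreier solvability in the algebraically closed base field.
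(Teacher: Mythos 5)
Your proposal is correct and follows essentially the same route as the paper: primitivity of $[x,y]$, the cancellation $[\omega(x),\,y\otimes 1+1\otimes y]=0$ giving $\Delta(y^p)=y^p\otimes 1+1\otimes y^p+\omega(x^p)$, the restricted-Lie identities to kill the commutator, and rescaling plus Artin--Schreier to normalize; your only reorganization is to force commutativity uniformly by combining $\lambda\mu=0$ with $\mu^p=\lambda^p\mu$ before splitting cases, whereas the paper first normalizes to $x^p=0$ or $x^p=x$ and argues each case separately. The one item the paper includes that you omit is the (easy) verification that the three listed Hopf algebras are pairwise non-isomorphic, which the literal statement of the lemma does not require.
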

\begin{proof}
By the previous argument, we can find elements $x,y\in H$ with the comultiplications given in \eqref{comultiplicationD1P1}. They generate a Hopf subalgebra of $H$ by Remark \ref{BHC}. Since $H$ has dimension $p^2$, $H$ is generated by $x,y$. It is clear that $[x,y]$ is primitive since
\begin{align*}
\Delta\left(\left[x,y\right]\right)&=\left[\Delta(x),\Delta(y)\right]\\
&=\left[x\otimes 1+1\otimes x,y\otimes 1+1\otimes y+\omega\left(x\right)\right]\\
&=\left[x,y\right]\otimes 1+1\otimes \left[x,y\right].
\end{align*}
In other words, we can write $[x,y]=\lambda x$ for some $\lambda\in \field$, which implies that $[x^n,y]=n\lambda\ x^n$ for any $n\ge 1$. Therefore we can show that
\begin{align}\label{commP2xy}
\left[\omega(x),y\otimes 1+1\otimes y\right]&=\left[\sum_{i=1}^{p-1}\frac{(p-1)!}{i!(p-i)!}\ x^i\otimes x^{p-i}\ ,\ y\otimes 1+1\otimes y\right]\\
&=\sum_{i=1}^{p-1}\frac{(p-1)!}{i!(p-i)!}\ \left([x^i,y]\otimes x^{p-i}+x^i\otimes [x^{p-i}, y]\right)\notag\\
&=\sum_{i=1}^{p-1}\frac{(p-1)!}{i!(p-i)!}\ \left(i\lambda\ x^i\otimes x^{p-i}+x^i\otimes (p-i)\lambda\ x^{p-i}\right)\notag\\
&=\sum_{i=1}^{p-1} \frac{p!}{i!(p-i)!}\lambda\ x^i\otimes x^{p-i}\notag\\
&=0\notag.
\end{align}
Since $\omega(x)^p=\omega(x^p)$, we have
\begin{align}\label{D2PY}
\Delta\left(y^p\right)=\left(y\otimes 1+1\otimes y+\omega(x)\right)^p=y^p\otimes 1+1\otimes y^p+\omega(x^p).
\end{align} 
By Theorem \ref{D1}, we can assume that $x^p=0$ or $x^p=x$. When $x^p=0$, according to the above equation \eqref{D2PY}, $y^p$ is primitive. Then we can write $y^p=\mu x$ for some $\mu\in \field$. Thus $\lambda^p x=x\ \ad(y)^p=[x,y^p]=[x,\mu x]=0$, which implies that $\lambda=0$. By further rescaling of the variables, we can assume $\mu$ to be either one or zero, which yields the first two classes. On the other hand, when $x^p=x$, by \eqref{D2PY} again, $y^p-y$ is primitive. Then we can write $y^p=y+\mu x$ for some $\mu \in \field$. Moreover, $[x,y]=[x^p,y]=\ad(x)^py=0$. After the linear translation $y=y'+\sigma x$ satisfying $\sigma^p=\sigma+\mu$, we have $y'^p=y'$ while $\Delta(y')=y'\otimes 1+1\otimes y'+\omega(x)$. This gives the third class. It remains to show those three Hopf algebras are non-isomorphic. The first two are local with different number of minimal generators and the third one is semisimple. Hence they are non-isomorphic as algebras. This completes the classification.
\end{proof}
Finally, the classification for connected Hopf algebras of dimension $p^2$ follows:
\begin{theorem}\label{D2}
Let $H$ be a connected Hopf algebra of dimension $p^2$. When $\dim \Prim(H)=2$, it is isomorphic to one of the following:
\begin{itemize}
\item[(1)] $\field\left[x,y\right]/\left(x^p,y^p\right)$,
\item[(2)] $\field\left[x,y\right]/\left(x^p-x,y^p\right)$,
\item[(3)] $\field\left[x,y\right]/\left(x^p-y,y^p\right)$,
\item[(4)] $\field\left[x,y\right]/\left(x^p-x,y^p-y\right)$,
\item[(5)] $\field\langle x,y\rangle/\left([x,y]-y,x^p-x,y^p\right)$,
\end{itemize}
where $x,y$ are primitive. 
When $\dim \Prim(H)=1$, it is isomorphic to one of the following:
\begin{itemize}
\item[(6)] $\field\left[x,y\right]/(x^p,y^p)$,
\item[(7)] $\field\left[x,y\right]/(x^p,y^p-x)$,
\item[(8)] $\field\left[x,y\right]/(x^p-x,y^p-y)$,
\end{itemize}
where $\Delta\left(x\right)=x\otimes 1+1\otimes x$ and $\Delta\left(y\right)=y\otimes 1+1\otimes y+\omega(x)$.
\end{theorem}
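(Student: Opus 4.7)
The argument splits according to $\dim \Prim(H)$. First, since $H$ is connected and of dimension $p^2>1$ we have $\dim \Prim(H)\ge 1$. On the other hand, by Proposition \ref{BPH}(6) the restricted universal enveloping algebra $u(\Prim(H))$ of the primitive space has dimension $p^{\dim \Prim(H)}$, and it embeds into $H$ as a Hopf subalgebra (the Hopf subalgebra generated by $\Prim(H)$ is primitively generated, and by Proposition \ref{BPH}(5) is isomorphic to $u$ of its own primitive space, which coincides with $\Prim(H)$ since passing to a Hopf subalgebra only shrinks primitives). Hence $p^{\dim \Prim(H)}\le p^2$ forces $\dim \Prim(H)\in\{1,2\}$.

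When $\dim \Prim(H)=2$, the chain of inequalities above becomes an equality and $H=u(\Prim(H))$, so $H$ is determined up to isomorphism by the isomorphism class of $\Prim(H)$ as a restricted Lie algebra. The classification thus reduces to listing the two-dimensional restricted Lie algebras over the algebraically closed field $\field$ of characteristic $p$, which is precisely the content of Appendix A: the bracket is either abelian or isomorphic to the unique non-abelian two-dimensional Lie algebra with $[x,y]=y$, and in each such bracket one normalizes the restricted $p$-map by rescaling generators and solving Artin--Schreier equations (using that $\field$ is algebraically closed). This produces exactly the five classes (1)--(5).

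When $\dim \Prim(H)=1$, one invokes Lemma \ref{D2P1C}, which already gives the three classes (6)--(8) together with the stated comultiplication. Commutativity (claimed in the statement) is embedded in the proof of that lemma: writing $[x,y]=\lambda x$ since $[x,y]$ is primitive, one deduces $\lambda=0$ either from $\lambda^p x=[x,y^p]=[x,\mu x]=0$ in the case $x^p=0$, or from $[x,y]=[x^p,y]=\ad(x)^p y=0$ in the case $x^p=x$. Pairwise non-isomorphism across all eight classes is then quick: the invariant $\dim \Prim(H)$ separates (1)--(5) from (6)--(8); within the first group the underlying restricted Lie algebras are already pairwise non-isomorphic by Appendix A; within the second group, (8) is semisimple while (6) and (7) are local, and (6) and (7) are distinguished by $\dim J/J^2=\dim \Prim(H^*)$ (equivalently, by the minimal number of algebra generators).

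The theorem is, at this stage, essentially an assembly of preceding results, so the main substantive obstacle has already been absorbed into Lemma \ref{D2P1C}, whose proof rests on extracting the coalgebraic presentation via Theorem \ref{HCT} and then solving the Artin--Schreier normalizations. What remains for the present theorem is purely the bookkeeping above: the dimension dichotomy for $\Prim(H)$, the invocation of the Lie-algebra classification in Appendix A, and the verification that the eight listed Hopf algebras are pairwise non-isomorphic.
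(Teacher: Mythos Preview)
Your proposal is correct and follows essentially the same route as the paper: the dichotomy on $\dim\Prim(H)$ via Proposition~\ref{BPH}(6), reduction of the $\dim\Prim(H)=2$ case to the restricted Lie algebra classification of Appendix~A, direct invocation of Lemma~\ref{D2P1C} for $\dim\Prim(H)=1$, and the same non-isomorphism checks. The paper's own proof is slightly terser (it simply asserts that (1)--(5) are ``obviously non-isomorphic as algebras'' and defers the (6)--(8) distinctions to the proof of Lemma~\ref{D2P1C}), but the logical content is identical.
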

\begin{proof}
By Proposition \ref{BPH}(6), we know $\dim \Prim(H)\le 2$. If $\dim\Prim(H)=2$, then $H$ is primitively generated and $H\cong u(\mathfrak g)$ for some two-dimensional restricted Lie algebra $\mathfrak g$ by Proposition \ref{BPH}(5). Therefore Proposition \ref{D2Lie} provides the classification. When $\dim \Prim(H)=1$, it is directly from Lemma \ref{D2P1C}. Finally, it is clear that the Hopf algebras given in (1)-(5) are non-isomorphic to the ones given in (6)-(8), since their primitive spaces have different dimension. The Hopf algebras in (1)-(5) are obviously non-isomorphic as algebras. Neither are the ones in (6)-(8). This completes the proof. 
\end{proof}
 
\begin{corollary}\label{localp2}
Let $H$ be a local Hopf algebra of dimension $p^2$. Then H is isomorphic to either $k\left[\xi, \eta\right]\big/ (\xi^p, \eta^p)$ or $k\left [\xi\right] \big/(\xi^{p^2})$ as algebras. When $H\cong k [\xi, \eta] / (\xi^p, \eta^p)$, the coalgebra structure is given by one of the following:
\begin{itemize}
\item[(1)] $\Delta\left(\xi\right)=\xi\otimes 1+1\otimes \xi,\\ \Delta\left(\eta\right)=\eta\otimes 1+1\otimes \eta$,
\item[(2)] $\Delta\left(\xi\right)=\xi\otimes 1+1\otimes \xi+\xi\otimes \xi,\\ \Delta\left(\eta\right)=\eta\otimes 1+1\otimes \eta$,
\item[(3)] $\Delta\left(\xi\right)=\xi\otimes 1+1\otimes \xi,\\ \Delta\left(\eta\right)=\eta\otimes 1+1\otimes \eta+\omega\left(\xi\right)$,
\item[(4)] $\Delta\left(\xi\right)=\xi\otimes 1+1\otimes \xi+\xi\otimes \xi,\\ \Delta\left(\eta\right)=\eta\otimes 1+1\otimes \eta+\eta\otimes \eta$,
\item[(5)] $\Delta\left(\xi\right)=\xi\otimes 1+1\otimes \xi+\xi\otimes \xi,\\ \Delta\left(\eta\right)=\eta\otimes 1+1\otimes \eta+\xi\otimes \eta$.
\end{itemize}
When $H\cong k\left[\xi\right]\big/ (\xi^{p^2})$, the coalgebra structure is given by
\begin{itemize}
\item[(6)] $\Delta\left(\xi\right)=\xi\otimes 1+1\otimes \xi$,
\item[(7)] $\Delta\left(\xi\right)=\xi\otimes 1+1\otimes \xi+\omega\left(\xi^p\right)$,
\item[(8)] $\Delta\left(\xi\right)=\xi\otimes 1+1\otimes \xi+\xi\otimes \xi$.
\end{itemize}
\end{corollary}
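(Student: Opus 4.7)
The plan is to apply Proposition \ref{BPH}(1), which says that $H$ is local if and only if $H^*$ is connected, thereby setting up a bijection between isomorphism classes of local Hopf algebras of dimension $p^2$ and those of connected Hopf algebras of dimension $p^2$. Theorem \ref{D2} supplies exactly eight iso classes of the latter, so there are exactly eight iso classes of local Hopf algebras of dimension $p^2$. It remains to compute the dual of each Hopf algebra in Theorem \ref{D2} and match with the eight structures listed in the statement, and to verify that these eight structures are pairwise non-isomorphic. Under a suitable numbering, the dual of the $i$-th case of Theorem \ref{D2} matches the $i$-th case of the corollary for each $i = 1,\ldots,8$.

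For the algebra structure of each $H^*$, I would first observe that every $H$ in Theorem \ref{D2} is cocommutative, since each listed coproduct is symmetric (including $\eta\otimes 1 + 1\otimes\eta + \omega(\xi)$, as $\omega(\xi)$ itself is symmetric). Hence $H^*$ is commutative and local of dimension $p^2$. Proposition \ref{BPH}(4) applied to the connected Hopf algebra $H^*$ gives $\dim J_{H^*}/J_{H^*}^2 = \dim\Prim(H)$, which equals $2$ for cases (1)--(5) and $1$ for cases (6)--(8) of Theorem \ref{D2}. Combined with \cite[Thm. 14.4]{GTM66} and the constraint $\dim H^* = p^2$, this forces $H^*\cong\field[\xi,\eta]/(\xi^p,\eta^p)$ in the first five cases and $H^*\cong\field[\xi]/(\xi^{p^2})$ in the last three, matching the two algebra shapes in the statement.

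For the coalgebra structure I would proceed case by case. In cases (1), (2), (4) of Theorem \ref{D2} we have $H = u(\mathfrak{g})$ for an abelian restricted Lie algebra, so $H^*$ is the coordinate ring of the Cartier-dual infinitesimal commutative group scheme ($\alpha_p\times\alpha_p$, $\mu_p\times\alpha_p$, $\mu_p\times\mu_p$ respectively), producing items (1), (2), (4) of the corollary via the substitution $\xi = t - 1$ on any group-like generator $t$. The non-abelian case (5) dualizes to the coordinate ring of the semidirect product $\mu_p\ltimes\alpha_p$, yielding the non-cocommutative form of item (5). The remaining cases (3), (6), (7), (8) of Theorem \ref{D2} I would handle by direct computation on a \PBW\ basis: the dual basis multiplication is determined by binomial coefficients, and Wilson's congruence $(p-1)!\equiv -1\pmod p$ converts the divided-power dual coproduct $\sum\xi_i\otimes\xi_{p-i}$ into the $\omega$-form, up to a sign absorbed by rescaling $\eta\mapsto -\eta$. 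This yields items (3), (6), (7) (with (7) being self-dual since the Hopf algebras of case (7) in Theorem \ref{D2} and the corollary coincide). For case (8), one additionally notes that $H$ is commutative semisimple (isomorphic to $\field^{p^2}$ as an algebra), so $H^*\cong\field[G]$ for a finite abelian group $G$ of order $p^2$; the fact that $H^*$ has a single $p^2$-nilpotent generator forces $G\cong\mathbb{Z}/p^2$, producing item (8).

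For pairwise non-isomorphism of the eight classes in the statement: (1)--(5) and (6)--(8) are separated by algebra structure (two generators each $p$-nilpotent versus one generator $p^2$-nilpotent). Within (1)--(5), the invariants $\dim\Prim(H) = 2,1,1,0,0$ respectively, together with the existence of a non-trivial group-like element (separating (2) from (3)) and cocommutativity (separating (4) from (5)), distinguish all five. Within (6)--(8), the invariants $\dim\Prim(H) = 2,1,0$ already separate them. The main obstacle is the explicit coalgebra computation in the four cases where Cartier duality does not apply cleanly, together with the careful tracking of the Wilson-theorem sign when identifying the divided-power-style dual comultiplication with the $\omega$-form stated in the corollary.
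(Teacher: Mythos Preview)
Your proposal is correct, but it does more work than the paper's own proof. The paper argues by \emph{counting and exhaustion}: once Proposition~\ref{BPH}(1) and Theorem~\ref{D2} yield exactly eight isomorphism classes of connected $H^*$, and once the algebra shape of $H$ is pinned down via $\dim J/J^2=\dim\Prim(H^*)$ and \cite[Thm.~14.4]{GTM66}, the paper simply observes that the eight coalgebra structures listed in the statement are pairwise non-isomorphic (and implicitly, that each defines a Hopf algebra). Since there are exactly eight classes and eight pairwise distinct candidates, the list must be complete---no explicit dualization is performed inside the proof. The explicit matching you propose (Cartier duals of $\alpha_p$, $\mu_p$, the Witt-type computation with Wilson's theorem for cases (3), (6), (7), and the group-algebra identification for (8)) is precisely what the paper defers to the Remark immediately following the corollary, where the generators $\xi,\eta$ are written down as linear functionals on the PBW basis.

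Your route has the advantage of being constructive: it exhibits the bijection $i\leftrightarrow i$ directly and automatically certifies that each listed structure is a Hopf algebra, whereas the paper's argument tacitly assumes the reader checks coassociativity and multiplicativity of each $\Delta$. The paper's route is shorter because non-isomorphism of the eight listed structures is quick (your invariants---$\dim\Prim$, existence of a nontrivial group-like, cocommutativity---work perfectly for this). One small slip in your write-up: where you say ``Proposition~\ref{BPH}(4) applied to the connected Hopf algebra $H^*$'', you mean the connected Hopf algebra $H$ (in your paragraph $H$ denotes the connected algebra from Theorem~\ref{D2} and $H^*$ the local one); the conclusion $\dim J_{H^*}/J_{H^*}^2=\dim\Prim(H)$ is nonetheless what you intend and is correct.
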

\begin{proof}
Denote the dual Hopf algebra of $H$ by $H^*$. By Proposition \ref{BPH}(1), $H^*$ is a connected Hopf algebra of dimension $p^2$. When $\dim \Prim(H^*)=2$, as shown in Theorem \ref{D2}, there are five non-isomorphic classes for $H^*$. By duality, there are also five non-isomorphic classes for $H$. Furthermore, from Proposition \ref{BPH}(4), $\dim J/J^2=\dim \Prim(H^*)=2$, where $J$ is the Jacobson radical of $H$. Notice that $H^*$ is cocommutative. Then $H$ is commutative and we have $H\cong \field[\xi,\eta]/(\xi^p,\eta^p)$ by \cite[Thm. 14.4]{GTM66}. It is easy to check that the coalgebra structures given in $(1)$-$(5)$ are non-isomorphic. The same argument applies to the other case. Theorem \ref{D2} shows that when $\dim \Prim(H^*)=1$, there are three non-isomorphic classes. Since $\dim J/J^2=\dim \Prim(H^*)=1$, $H$ is isomorphic to $\field[\xi]/(\xi^{p^2})$ as algebras. Because those given in $(6)$-$(8)$ are non-isomorphic as coalgebras. They complete the list. 
\end{proof}
\begin{remark}
In fact, the Hopf algebras in Corollary \ref{localp2} (1)-(8) are in one-to-one correspondence with those in Theorem \ref{D2} (1)-(8) via duality. Below, in each case, we describe the generator(s) $\xi,\ \eta$  as linear functional(s) on the basis  $\{x^i\ y^j\, |\, 0\le i,j\le p-1\}$.
\begin{align}
&\xi\left(x^iy^j\right)=
\begin{cases}
1  &  i=1,\ j=0\\
0  & \mbox{otherwise}\\
\end{cases},\quad
\eta\left(x^iy^j\right)=
\begin{cases}
1  &  i=0,\ j=1\\
0  & \mbox{otherwise}\tag{1}\\
\end{cases}\\
&\xi\left(x^iy^j\right)=
\begin{cases}
1  &  i\neq 0,\ j=0\\
0  & \mbox{otherwise}\\
\end{cases},\quad
\eta\left(x^iy^j\right)=
\begin{cases}
1  &  i=0,\ j=1\\
0  & \mbox{otherwise}\tag{2}\\
\end{cases}\\
&\xi\left(x^iy^j\right)=
\begin{cases}
1  &  i=1,\ j=0\\
0  & \mbox{otherwise}\\
\end{cases},\quad
\eta\left(x^iy^j\right)=
\begin{cases}
-1  &  i=0,\ j=1\\
0  & \mbox{otherwise}\tag{3}\\
\end{cases}\\
&\xi\left(x^iy^j\right)=
\begin{cases}
1  &  i\neq 0,\ j=0\\
0  & \mbox{otherwise}\\
\end{cases},\quad
\eta\left(x^iy^j\right)=
\begin{cases}
1  &  i=0,\ j\neq 0\\
0  & \mbox{otherwise}\tag{4}\\
\end{cases}\\
&\xi\left(x^iy^j\right)=
\begin{cases}
1  &  i\neq 0,\ j=0\\
0  & \mbox{otherwise}\\
\end{cases},\quad
\eta\left(x^iy^j\right)=
\begin{cases}
1  &  j=1\\
0  & \mbox{otherwise}\tag{5}\\
\end{cases}\\
&\xi\left(x^iy^j\right)=
\begin{cases}
1  &  i=1,\ j=0\\
0  & \mbox{otherwise}\tag{6-8}
\end{cases}.
\end{align}
\end{remark}
\begin{theorem}\label{centerP1}
Let $H$ be a finite-dimensional connected Hopf algebra with $\dim\Prim(H)=1$. Then the center of $H$ contains $\Prim(H)$. 
\end{theorem}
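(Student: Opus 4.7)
The plan is to fix a basis vector $x$ of the one-dimensional space $\Prim(H)$ and show by induction on $n$ that $[x, H_n] = 0$ for every $n \ge 0$; since the coradical filtration exhausts the finite-dimensional $H$, this places $\Prim(H) = \field x$ in the center. As preparation, note that $x^p \in \Prim(H)$ because $(x \otimes 1 + 1 \otimes x)^p = x^p \otimes 1 + 1 \otimes x^p$ in characteristic $p$, so $x^p = \lambda x$ for some $\lambda \in \field$, and after rescaling I may assume either $x^p = x$ or $x^p = 0$. The base cases $n \le 1$ are immediate from $H_1 = \field \oplus \field x$.

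For the inductive step, assume $[x, H_{n-1}] = 0$ and take $y \in H_n$. By \cite[Lemma 5.3.2]{MO93}, $\Delta(y) - y \otimes 1 - 1 \otimes y \in H_{n-1} \otimes H_{n-1}$; a direct computation of $\Delta([x,y]) = [x \otimes 1 + 1 \otimes x,\Delta(y)]$ using the inductive hypothesis then gives $\Delta([x,y]) = [x, y] \otimes 1 + 1 \otimes [x, y]$. Hence $[x, y] \in \Prim(H) = \field x$, so $[x, y] = \mu x$ for some $\mu \in \field$, and the whole problem reduces to showing $\mu = 0$.

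If $x^p = x$, I will use the associative-algebra identity $\ad(x)^p = \ad(x^p)$, valid in characteristic $p$ because $\ad(x) = L_x - R_x$ with $L_x, R_x$ commuting. Since $\ad(x)^2(y) = \ad(x)(\mu x) = 0$, it follows that $\ad(x)(y) = \ad(x)^p(y) = \ad(x^p)(y) = 0$, forcing $\mu = 0$. The main obstacle is the remaining case $x^p = 0$, where the above identity becomes vacuous. My plan there is to show that $H$ is local: Proposition \ref{BPH}(4) combined with $\dim\Prim(H) = 1$ forces $H^*$ to be a local algebra with one-dimensional $J/J^2$, so $H^* \cong \field[t]/(t^{p^n})$ as an algebra, whence $H$ is cocommutative; since the primitive element $x$ is nilpotent, Theorem \ref{NPLA} then gives that $H$ is local. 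Consequently $H^+$ is a nilpotent ideal, so writing $y = \e(y) + y'$ with $y' \in H^+$, the commuting operators $L_{y'}$ and $R_{y'}$ are both nilpotent, making $\ad(y') = L_{y'} - R_{y'}$ a nilpotent operator on $H$. Since $\ad(y')(x) = -\mu x$ exhibits $x$ as an eigenvector of $\ad(y')$ with eigenvalue $-\mu$, nilpotency forces $\mu = 0$, completing the induction.
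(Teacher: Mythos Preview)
Your proof is correct and takes a genuinely different route from the paper's. Both arguments normalize $x$ so that $x^p=x$ or $x^p=0$, and both handle the case $x^p=x$ identically via $\ad(x)=\ad(x^p)=\ad(x)^p$. The substantive divergence is in the overall induction scheme and in the treatment of the nilpotent case.

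The paper inducts along the normal filtration $\field=N_0\subset\cdots\subset N_n=H$ of Corollary~\ref{FCLH}: assuming $x\in Z(N_{n-1})$, it picks a single generator $y$ of $H$ over $N_{n-1}$ and invokes Theorem~\ref{FREENESS} to obtain a relation $y^p+\lambda y+a=0$ with $a\in N_{n-1}$. When $x^p=0$, locality of $H$ (via Theorem~\ref{NPLA}) forces $\lambda=0$, so $y^p\in N_{n-1}$ and then $\mu^p x=[x,y^p]=0$. You instead induct directly on the coradical filtration, showing $[x,H_n]=0$ for all $n$; this bypasses both Corollary~\ref{FCLH} and the module-theoretic machinery of Theorem~\ref{FREENESS}. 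For the nilpotent case you still appeal to Theorem~\ref{NPLA} to get $H$ local, but then finish with the elementary observation that $\ad(y')=L_{y'}-R_{y'}$ is a difference of commuting nilpotent operators and hence nilpotent, so it cannot have $x$ as an eigenvector with nonzero eigenvalue $-\mu$. Your argument is lighter on prerequisites; the paper's approach is more structural and ties the result to the explicit $p$-polynomial satisfied by $y$, which is of independent interest elsewhere in the paper.

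One small presentational point: the chain $\ad(x)(y)=\ad(x)^p(y)=\ad(x^p)(y)=0$ reads a bit circularly, since the last term is again $\ad(x)(y)$. The intended logic is $\ad(x)(y)=\ad(x^p)(y)=\ad(x)^p(y)=0$, with the final vanishing coming from $\ad(x)^2(y)=0$; you might reorder accordingly.
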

\begin{proof}
Suppose $\Prim(H)$ is spanned by $x$. By Corollary \ref{FCLH}, $H$ has an increasing sequence of normal Hopf subalgebras: 
\begin{align*}
\field=N_0\subset N_1\subset N_2\subset \cdots \subset N_n=H
\end{align*} 
satisfying $N_1$ is generated by $x$ and $N_{n-1}\subset H$ is normal with $p$-index one. We show by induction on $n$ such that the center of $H$ contains $x$. It is trivial when $n=1$. Assume that $n\ge 2$. Then by Theorem \ref{Cohomologylemma}, we can find some $y\in H\setminus N_{n-1}$ such that $\Delta(y)=y\otimes 1+1\otimes y+u$, where $u\in N_{n-1}\otimes N_{n-1}$, which together with $N_{n-1}$ generate $H$. Apply Theorem \ref{FREENESS} to $N_{n-1}\subset H$, we have $y^p+\lambda\ y+a=0$ for some $\lambda \in \field$ and $a\in N_{n-1}$.   

By induction, $x\in Z(N_{n-1})$. Then it suffices to show $[x,y]=0$. It is easy to check that $[x,y]$ is primitive. Therefore we can write $[x,y]=\mu x$ for some $\mu \in \field$. By rescaling, we can further assume either $x^p=0$ or $x^p=x$. When $x^p=0$, by Theorem \ref{NPLA}, $H$ is local. Then its quotient $H/N_{n-1}^+H$, which is generated by the image of $y$, is local too. Hence the image of $y$ in $H/N_{n-1}^+H$ is nilpotent since it is primitive. Thus in the relation $y^p+\lambda\ y+a=0$, we must have $\lambda=0$ and $y^p+a=0$. A calculation therefore shows that $\mu^px=x(\ad y)^p=[x,y^p]=[x,-a]=0$ which implies that $[x,y]=\mu x=0$. When $x^p=x$, we have $[x,y]=[x^p,y]=(\ad x)^py=0$. This completes the proof.
\end{proof}
\appendix

\section{Restricted Lie algebras}
We state the following technical lemma which is the key to our classification of finite-dimensional connected Hopf algebras.
\begin{lemma}{\cite[P. 186-187]{JA79}}\label{palgebra}
For any associative $\field$-algebra $A$, we have
\begin{align*}
\left(x+y\right)^{p}=x^{p}+y^{p}+\sum_{i=1}^{p-1} s_i\left(x,y\right)
\end{align*}
where $is_i(x,y)$ is the coefficient of $\lambda^{i-1}$ in $x\left(\ad(\lambda x+y)\right)^{p-1}$ and 
\begin{align*}
\left[x^p,y\right]=\left(\ad\ x\right)^p(y)
\end{align*}
for any $x,y\in A$.
\end{lemma}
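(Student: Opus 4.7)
The plan is to prove both identities by the classical Jacobson trick of working in the polynomial algebra $A[\lambda]$ over a central indeterminate $\lambda$ and computing $(\lambda x+y)^p$ in two different ways. Throughout, $L_a,R_a\in \End_\field(A)$ denote left and right multiplication by $a$, which commute with each other, so that $\ad a=L_a-R_a$.

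I would first dispatch the second identity as a warm-up. Since $L_x$ and $R_x$ commute, the binomial theorem gives
\begin{align*}
(\ad x)^p=(L_x-R_x)^p=\sum_{k=0}^{p}(-1)^k\binom{p}{k}L_x^{p-k}R_x^k.
\end{align*}
In characteristic $p$, $\binom{p}{k}\equiv 0\pmod p$ for $1\le k\le p-1$, so the sum collapses to $L_x^p-R_x^p$. Applied to $y$, this gives $(\ad x)^p(y)=x^py-yx^p=[x^p,y]$, which is the second assertion.

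Next, for the first identity, I would establish the auxiliary formula
\begin{align*}
x\cdot(\ad z)^{p-1}=\sum_{k=0}^{p-1} z^k\, x\, z^{p-1-k},\quad z\in A.
\end{align*}
Expanding $(\ad z)^{p-1}=(L_z-R_z)^{p-1}$ by the binomial theorem and applying it to $x$ yields $\sum_{k=0}^{p-1}(-1)^k\binom{p-1}{k}z^{p-1-k}x z^k$. The elementary congruence $(-1)^k\binom{p-1}{k}\equiv 1\pmod p$ (from $\binom{p-1}{k}=\binom{p}{k}-\binom{p-1}{k-1}$ together with $\binom{p}{k}\equiv 0$, by induction on $k$) turns every coefficient into $1$, giving the displayed formula after reindexing.

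Now pass to $A[\lambda]$ with $\lambda$ central. Since $(\lambda x+y)^p$ lies in $A[\lambda]$ and its $\lambda^0$ and $\lambda^p$ coefficients are visibly $y^p$ and $x^p$, define $s_i(x,y)\in A$ by
\begin{align*}
(\lambda x+y)^p=\lambda^p x^p+y^p+\sum_{i=1}^{p-1}\lambda^i\, s_i(x,y).
\end{align*}
The formal derivation $d/d\lambda$ on $A[\lambda]$ is well defined because $\lambda$ is central, and the non-commutative product rule gives $\frac{d}{d\lambda}(\lambda x+y)^p=\sum_{k=0}^{p-1}(\lambda x+y)^k\, x\,(\lambda x+y)^{p-1-k}$; by the auxiliary formula with $z=\lambda x+y$, this equals $x(\ad(\lambda x+y))^{p-1}$. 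Differentiating the right-hand side and using $d\lambda^p/d\lambda=0$ in characteristic $p$ gives $\sum_{i=1}^{p-1}i\lambda^{i-1}s_i(x,y)$. Matching coefficients of $\lambda^{i-1}$ identifies $is_i(x,y)$ as the stated coefficient of $\lambda^{i-1}$ in $x(\ad(\lambda x+y))^{p-1}$, and specializing $\lambda=1$ in the definition of the $s_i$ recovers $(x+y)^p=x^p+y^p+\sum_{i=1}^{p-1}s_i(x,y)$.

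The only non-routine ingredient is the congruence $(-1)^k\binom{p-1}{k}\equiv 1\pmod p$, which is the source of both the collapse in the $\ad$-expansion and the fact that the $\lambda$-derivative of a $p$-th power detects the non-commutative cross terms; once it is in hand, the rest is bookkeeping in $A[\lambda]$.
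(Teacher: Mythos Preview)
Your proof is correct and is precisely the classical argument from Jacobson's \emph{Lie Algebras}, pp.~186--187, which the paper cites without reproducing a proof. Since the paper gives no proof of its own for this lemma, there is nothing further to compare; your write-up supplies exactly the standard derivation via differentiation of $(\lambda x+y)^p$ in $A[\lambda]$ together with the congruence $(-1)^k\binom{p-1}{k}\equiv 1\pmod p$.
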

\begin{definition}\cite[Chapter V Def. 4]{JA79}\label{DefRLA}
A \bf{restricted Lie algebra}\rm\ $\mathfrak{g}$ over $\field$ is a Lie algebra in which there is defined a map $\mathfrak g\to \mathfrak g$, i.e., $x\mapsto x^{[p]}$ such that 
\begin{itemize}
\item[(1)] $\left(\alpha x\right)^{[p]}=\alpha^p x^{[p]}$,
\item[(2)] $\left(x+y\right)^{[p]}=x^{[p]}+y^{[p]}+\sum_{i=1}^{p-1} s_i(x,y)$, where $is_i(x,y)$ is the coefficient of $\lambda^{i-1}$ in $x\left(\ad(\lambda x+y)\right)^{p-1}$,
\item[(3)] $\left[x,y^{[p]}\right]=x\left(\ad\ y\right)^p$,
\end{itemize}
for all $x,y\in \mathfrak{g}$ and $\alpha \in \field$.
\end{definition}
If $\mathfrak{g}$ is restricted and $\mathcal U(\mathfrak g)$ is the usual universal enveloping algebra, let $B$ be the ideal in $\mathcal{U}(\mathfrak g)$ generated by all $x^p-x^{[p]},x\in \mathfrak g$, and define $u(\mathfrak {g})=\mathcal U(\mathfrak{g})/B$. Then $u(\mathfrak g)$ is called the restricted universal enveloping algebra of $\mathfrak g$. A version of the \upshape{PBW} theorem holds for $u(\mathfrak g)$: given a basis for $\mathfrak g$, the ordered monomials in this basis, where the exponent of each basis element is bounded by $p-1$, form a basis for $u(\mathfrak g)$. Consequently if $\dim \mathfrak g=n$, then $\dim u(\mathfrak g)=p^n$. 

Let $\mathfrak g$ be a two-dimensional Lie algebra with basis $\{x,y\}$. There is, up to isomorphism, a unique two-dimensional non-abelian Lie algebra, and we can assume $[x, y] = y$ without loss of generality. The following result about two-dimensional restricted Lie algebras probably is well-known, see, e.g., \cite[Chapter V \S 8]{JA79}. 
\begin{proposition}\label{D2Lie}
Let $\mathfrak{g}$ be a two-dimensional restricted Lie algebra with basis $\{x,y\}$. Then the restricted maps can be classified as follows:
When $\mathfrak g$ is abelian:
\begin{itemize}
\item[(1)] $x^{[p]}=0,y^{[p]}=0$,
\item[(2)] $x^{[p]}=x,y^{[p]}=0$,
\item[(3)] $x^{[p]}=y,y^{[p]}=0$,
\item[(4)] $x^{[p]}=x,y^{[p]}=y$.
\end{itemize}
When $\mathfrak g$ is non-abelian such that $[x,y]=y$:
\begin{itemize}
\item[(5)] $x^{[p]}=x,y^{[p]}=0$.
\end{itemize}
\end{proposition}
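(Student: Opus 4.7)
The proof splits into the abelian and non-abelian cases. In the abelian case, axioms (1)--(2) of Definition \ref{DefRLA} together with $[\mathfrak{g},\mathfrak{g}]=0$ (which kills every $s_i(u,v)$) make the restricted map $F(v):=v^{[p]}$ an additive, $p$-semilinear endomorphism of $\mathfrak{g}$. Because $\field$ is algebraically closed, $p$-th roots can be freely extracted, so both $\ker F$ and $F(\mathfrak{g})$ are genuine $\field$-subspaces and the usual rank-nullity analogue holds. I would classify $F$ up to the $GL_{2}(\field)$-action by its rank: rank $0$ gives $(1)$; rank $1$ with $F^{2}=0$ gives $(3)$ via the Jordan-like basis $y=F(x)$; rank $1$ with $F^{2}\neq 0$ gives $(2)$, after rescaling a nonzero image vector to an $F$-fixed vector and picking a complement in $\ker F$; rank $2$ gives $(4)$.

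The rank-$2$ step is the substantive input on the abelian side: it asserts that any invertible $p$-semilinear endomorphism of a finite-dimensional vector space over an algebraically closed field of characteristic $p$ admits a basis of $F$-fixed vectors, which is a Lang-type statement. Concretely, starting from any $v$ with $F^{2}(v)=av+bF(v)$, I would find a fixed line by solving the degree-$(p+1)$ polynomial $\mu^{p+1}+b\mu-a=0$ over $\field$ and rescaling to get $F(x)=x$; a complementary fixed vector $y$ is then produced by solving two further equations of the form $\delta^{p-1}\epsilon=1$ and $\mu^{p}-\mu=\zeta$, each of which has a root in $\field$.

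For the non-abelian case with $[x,y]=y$, I would exploit Definition \ref{DefRLA}(3). The relations $[x,y]=y$ and $[y,y]=0$ give $(\ad y)^{2}=0$, so $[v,y^{[p]}]=v(\ad y)^{p}=0$ for every $v$, forcing $y^{[p]}$ to be central; since $\mathfrak{g}$ has trivial centre, $y^{[p]}=0$. A parallel calculation with $\ad x$ yields $[x^{[p]},y]=y$, and writing $x^{[p]}=\alpha x+\beta y$ forces $\alpha=1$ and leaves only the single free scalar $\beta\in\field$. A quick check shows that the only Lie-algebra automorphisms preserving $[x,y]=y$ are $x\mapsto x+cy,\ y\mapsto dy$ with $c\in\field,\ d\in\field^{\times}$, and I would use these to normalise $\beta$.

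To track how $\beta$ transforms I would compute $(x+cy)^{[p]}$ via Jacobson's formula (Lemma \ref{palgebra}). The iterated bracket here is simple: a short induction gives $x(\ad(\lambda x+cy))^{n}=(-\lambda)^{n-1}cy$ for $n\geq 1$, so only $s_{p-1}(x,cy)=cy$ survives and one obtains $(x+cy)^{[p]}=x^{[p]}+cy$. Combined with the rescaling $y\mapsto dy$, this allows $\beta$ to be put in the normal form of case $(5)$. The main obstacle is precisely this $s_{p-1}$ calculation and the bookkeeping showing that the remaining automorphism group acts transitively enough on $\beta$; the abelian rank-$2$ Lang-type diagonalisation, carried out through the polynomial root extractions indicated above, is the other technically substantive step.
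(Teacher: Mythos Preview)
Your abelian case is correct and takes a different route from the paper. You classify the $p$-semilinear endomorphism $F$ directly by rank and then diagonalise the invertible case via explicit root extractions (a hands-on Lang argument), whereas the paper invokes the Fitting-type splitting $\mathfrak g=\mathfrak g_0\oplus\mathfrak g_1$ from Jacobson and then the cyclic-module decomposition of $\mathfrak g_0$ over the twisted polynomial ring $\Phi$, together with \cite[Thm.~13, Ch.~V]{JA79} for $\mathfrak g_1$. Your argument is more self-contained; the paper's is shorter once one is willing to quote those structural results.

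In the non-abelian case there is a genuine gap. You correctly obtain $y^{[p]}=0$ and, from $[x^{[p]},y]=y$, that $\alpha=1$ in $x^{[p]}=\alpha x+\beta y$. But $\beta$ is \emph{not} a free parameter: axiom~(3) of Definition~\ref{DefRLA} with both arguments equal to $x$ gives $[x,x^{[p]}]=x(\ad x)^p=0$, while $[x,x+\beta y]=\beta[x,y]=\beta y$, forcing $\beta=0$ immediately. This is exactly the one-line argument the paper uses. Your proposed alternative of normalising $\beta$ via automorphisms does not actually work: with $x'=x+cy$ your own computation $(x+cy)^{[p]}=x^{[p]}+cy$ gives
\[
x'^{[p]}=x+(\beta+c)y=(x'-cy)+(\beta+c)y=x'+\beta y,
\]
so the coefficient of $y$ is unchanged; the further rescaling $y\mapsto dy$ then replaces $\beta$ by $\beta/d$, which is still nonzero. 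Thus the automorphism group you identified acts on $\beta$ only by $\field^\times$ and cannot send it to $0$. The fix is simply to record the missing constraint $[x,x^{[p]}]=0$; once you do, the Jacobson-formula detour becomes unnecessary.
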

\begin{proof}
First suppose $\mathfrak g$ is abelian. Then by \cite[Ex. 19]{JA79}, $\mathfrak g$ can be decomposed into a direct sum $\mathfrak g=\mathfrak g_0\oplus \mathfrak g_1$, where $\mathfrak g_0^{p^n}=0$ for sufficient large $n$ and $\mathfrak g_1^p=\mathfrak g_1$. Define the non-commutative polynomial ring $\Phi=\{\alpha_0+\alpha_1t+\cdots \alpha_nt^n|\alpha_i\in\field\}$, where $t$ is an indeterminate such that $t\alpha=\alpha^pt$. By comments \cite[P. 192]{JA79}, $\mathfrak g_0$ can be viewed as a module over $\Phi$ with $t$ acts on $\mathfrak g_0$ by the restricted map. Hence $\mathfrak g_0$ is annihilated by $t^n$ for $n>>0$. Notice that $\Phi$ is a PID. Thus
\begin{align*}
\mathfrak g_0\cong \bigoplus_i \Phi\big/(t^{n_i})
\end{align*}
as $\Phi$-modules. Suppose $\dim \mathfrak g_1=0$. Then $\mathfrak g_0$ is either isomorphic to the cyclic module of dimension two over $\Phi$, or isomorphic to the direct sum of two copies of the one-dimensional cyclic module over $\Phi$. By applying \cite[Chapter V \S 8 Thm. 13]{JA79} to $\mathfrak g_1$, it is easy to see that the first one gives case $(3)$ and the second one gives case $(1)$. If $\dim \mathfrak g_1=1$, we have case $(2)$. If $\dim \mathfrak g_1=2$, it is case $(4)$. Moreover, they are all non-isomorphic because of the different decompositions and module structures over $\Phi$. When $\mathfrak{g}$ is non-abelian, by the condition $(3)$ of Definition \ref{DefRLA}, we have $[x,x^{[p]}]=[y,y^{[p]}]=[x,y^{[p]}]=0$ and $[x^{[p]},y]=y$. Since $[x,y]=y$, we have $x^{[p]}=x,y^{[p]}=0$. 
\end{proof}

\section{Acknowledgment}
The author would like to greatly acknowledge the invaluable help and guidance from his Ph.D. advisor, Professor J.J. Zhang, and appreciates the help provided by Professors Greenberg, Masuoka and Premet for advice on some proofs. The author is grateful to the referee for his/her careful reading and many useful comments that improved the final version of the paper, and also thankful to Cris Negron and Guangbin Zhuang for their suggestions and comments on the earlier drafts of the paper.

\end{document}